\journal{Arxiv}
\definecolor{lightblue}{rgb}{0.22,0.45,0.70}
\definecolor{lightgreen}{rgb}{0.1,0.6,0.10}
\definecolor{darkred}{rgb}{0.82,0.15,0.20}
\definecolor{darkblue}{rgb}{0.82,0.15,0.12}
\numberwithin{equation}{section}
\numberwithin{figure}{section}
\numberwithin{table}{section}
\newcommand\cero{\boldsymbol{0}}
\newcommand\bL{\mathbf{L}}
\newcommand\bI{\mathbf{I}}
\newcommand\bV{\mathbf{V}}
\newcommand\bW{\mathbf{W}}
\newcommand\bPi{\boldsymbol{\Pi}}
\newcommand\beps{\boldsymbol{\varepsilon}}
\newcommand\bb{\boldsymbol{b}}
\newcommand\bg{\boldsymbol{g}}
\newcommand\nn{\boldsymbol{n}}
\newcommand\bsigma{\boldsymbol{\sigma}}
\newcommand\bu{\boldsymbol{u}}
\newcommand\bv{\boldsymbol{v}}
\newcommand\bw{\boldsymbol{w}}
\newcommand\bx{\boldsymbol{x}}
\newcommand\RR{\mathbb{R}}
\newcommand\cT{\mathcal{T}}
\newcommand\qand{\quad\hbox{and }\quad}
\newcommand\bdiv{\mathop{\mathbf{div}}\nolimits}
\newcommand\vdiv{\mathop{\mathrm{div}}\nolimits}
\newcommand\bt{\boldsymbol{t}}
\newcommand\bnabla{\boldsymbol{\nabla}}
\newcommand\bDelta{\boldsymbol{\Delta}}
\newtheorem{remark}{Remark}[section]
\newtheorem{lemma}{Lemma}[section]
\newtheorem{theorem}{Theorem}[section]
\newtheorem{corollary}{Corollary}[section]
\newcommand\OmP{\Omega^{\mathrm{P}}}
\newcommand\OmE{\Omega^{\mathrm{E}}}
\newcommand\rmP{\mathrm{P}}
\newcommand\rmE{\mathrm{E}}
\newcommand{\dx}{\,\mbox{d}x}
\newcommand{\ds}{\,\mbox{d}s}
\newenvironment{proof}{\noindent{\it Proof.}}{\hfill$\square$}
\begin{document}
	
	\hypersetup{
		linkcolor=lightgreen,
		urlcolor=lightgreen,
		citecolor=lightgreen
	}
	\begin{frontmatter}
		
		\title{\textbf{Numerical solution of the Biot/elasticity \\interface problem using virtual element methods}}
	
		\author[iist]{Sarvesh Kumar}
		\ead{sarvesh@iist.ac.in}
		
		\author[ubb,ci2ma]{David Mora}
		\ead{dmora@ubiobio.cl}
		
		\author[monash,unach]{Ricardo Ruiz-Baier}
		\ead{ricardo.ruizbaier@monash.edu}
		
		\author[iitb]{Nitesh Verma}
		\ead{p15379@iitb.ac.in}
		
		\address[iitb]{Department of Mathematics, Indian Institute of Technology Bombay, Powai, Mumbai 400 076, 
Maharashtra, India.}

		\address[iist]{Department of Mathematics, Indian Institute of Space	Science and Technology, Trivandrum 695 547, India.}
			
		\address[ubb]{GIMNAP, Departamento de Matem\'atica, Universidad del B\'io-B\'io, Casilla 5-C, Concepci\'on, Chile.}
		\address[ci2ma]{CI$^{\,2}\!$MA, Universidad de Concepci\' on, Chile.}	
		
		\address[monash]{School of Mathematics, Monash University, 9 Rainforest walk, Melbourne, 3800 VIC, Australia.}
		\address[unach]{Universidad Adventista de Chile, Casilla 7-D, Chill\'an, Chile.}
		
		
		\begin{abstract}
We propose, analyse and implement a virtual element discretisation for an interfacial poroelasticity--elasticity consolidation problem. The formulation of the time--dependent poroelasticity equations uses displacement, fluid pressure and total pressure, and the elasticity equations are written in displacement-pressure formulation. The construction of the virtual element scheme does not require Lagrange multipliers to impose the transmission conditions (continuity of displacement and total traction, and no--flux for the fluid) on the interface.  We show the stability and convergence of the virtual element method for different polynomial degrees, and the error bounds are robust with respect to delicate model parameters (such as Lam\'e constants, permeability, and storativity coefficient). Finally we provide numerical examples that illustrate the properties of the scheme.

		\end{abstract}
		
		\begin{keyword} Biot equations \sep virtual element methods \sep time-dependent problems \sep a priori error analysis.
			\MSC 65M60 \sep 74F10 \sep 35K57 \sep 74L15.
		\end{keyword}
		
	\end{frontmatter}

\section{Introduction and problem statement}

Diverse applications in the fields of environmental engineering, geoscience and biomedicine involve the modelling of transmission/contact poroelastic-elastic processes. Examples of such applications include land subsidence, solid waste management, withdrawal of ground water, harnessing of geothermal energy, describing blood perfusion of deformable living tissues (such as cartilage), and many other scenarios of increasing complexity \cite{girault15,hossein21}. Obtaining approximate solutions for the coupling of multiphysics problems through an interface typically requires to combine numerical methods of distinct nature, coming from,  e.g., different mesh sizes and types, and exhibiting hanging nodes across the interface. In particular, when one of the multiphysics sub-problems is constituted by the elasticity equations, a well-known issue is the presence of volumetric locking observed in the nearly incompressible regime; and several different remedies are available, including mixed formulations of diverse types, enrichment, and reconstructions (see, e.g., \cite{herrmann65} and the monograph \cite{boffi13}). In the applications mentioned above, the elasticity equations interact with the quasi-static Biot's equations for poroelasticity. There, two additional key issues are commonly encountered with classical discretisations:  one is Poisson locking (when the Poisson ratio approaches 1/2) and the other is the presence of spurious pressure oscillations occurring when the storativity coefficient approaches zero \cite{mardal21}. Methods that enjoy the property of being locking free are abundant in the literature, including mixed finite elements (FE) \cite{boon21,hong18,lee17,lee19,oyarzua16,rodrigo16,yi17}, discontinuous Galerkin (DG) \cite{chen13,kanschat18,riviere17,zhao21}, hybrid high-order (HHO) \cite{boffi16,botti21,fu19}, finite volume-element (FVE) \cite{kumar19,rl16}, and some types of virtual element methods (VEM). From the latter class of schemes, we mention \cite{coulet20} where the authors address robust discretisations for elasticity, the families of locking-free VE methods for three-field poroelasticity introduced in  \cite{burger_acom21,tang21}, and the extended theory amenable for heterogeneous domains recently proposed in \cite{skreek21}.

The interfacial Biot/elasticity problem has been studied in great detail (analysis of unique solvability, design and analysis of finite element discretisations, and derivation of error estimates) in the following references \cite{adgmr20,anaya22,badia23,gira11,gira20,girault15}. Its formulation requires a careful setup and analysis of transmission conditions. In these contributions the meshes from the two subdomains are assumed to match on the interface. Here we conduct a similar analysis, but focusing in the case of VEM, which in particular removes the restriction on mesh conformity at the interface, and it also permits us to have small edges. Formulations for interface problems using VEM have been analysed in the case of elliptic transmission equations \cite{cww_jcp16}, for example. In such cases the analysis needs additional mesh conditions on the interface, which might be too restrictive in view of the applicative Biot/elasticity systems where fine physical details need to be captured near the interface.

Typical VEM formulations feature a consistency term that involves suitably scaled projections. There exist recent VEM tools advanced in \cite{daveiga17,brenner18} that use a different stabilisation (based on edge/face tangential jumps) which relaxes the mesh assumptions, and a similar idea is also put forward in the context of HHO methods \cite{droniou21} (addressing several types of possible stabilisers). An important observation, however, is that, at least in the cases studied herein, numerical results indicate that those types of alternative stabilisations do not present significant differences (in terms of experimental convergence rates) with respect to the most widely adopted VEM stabilisation (the \emph{dofi-dofi} stabiliser \cite{daveiga-b13}), even in the case of meshes with small edges and for polynomial degrees larger or equal than 2. Thus the advantage in this scenario seems to be confined to the convenience in the theoretical analysis. We therefore restrict the analysis under standard mesh assumptions for VEM as in, e.g., \cite{daveiga-NS18}, treating the case of partitions with close (but not collapsing) vertices only from the numerical viewpoint. We also specify that the VEM advanced herein is based on a new formulation for the elasticity--poroelasticity coupling using three fields: a global displacement, the fluid pressure, and a global pressure. The analysis can be framed following an extension of the VEM for three-field Biot's poroelasticity  recently proposed in \cite{burger_acom21}.

The outline of this work is as follows. The well-posed weak formulation of the interface poroelasticity-elasticity problem is defined in Section \ref{sec:model}. In Section \ref{sec:VEapprox}, we define a new VE formulation for the interface problem, and we then derive the well-posedness of both semi and fully discrete schemes. Section~\ref{sec:estimates} contains the a priori error analysis for the time and space discretisation. We close in Section \ref{sec:numer} by presenting simple numerical experiments that serve to verify the theoretically predicted optimal order of convergence. For sake of readability, details on the proofs of the four main theorems of the paper are collected in the Appendix.

\section{Equations of time-dependent interface poroelasticity/elasticity with global  pressure}\label{sec:model}
Considering the problem formulation presented in the references \cite{gira11,gira20}, we examine a bounded Lipschitz domain $\Omega\subset\mathbb{R}^d$, where $d \in \{2,3\}$. This domain is divided into two non-overlapping and connected subdomains, $\OmE$ and $\OmP$, representing regions of non-pay rock (treated as an elastic body) and a poroelastic reservoir, respectively. The interface between these subdomains is denoted as $\Sigma=\partial\OmP\cap \partial\OmE$, with the outward normal vector $\nn$ pointing from $\OmP$ to $\OmE$. The boundary of the domain $\Omega$ is further separated into the boundaries of the individual subdomains: $\partial \Omega:=\Gamma^\mathrm{P} \cup \Gamma^\mathrm{E}$. These boundaries are then divided into disjoint Dirichlet and Neumann conditions: $\Gamma^\mathrm{P}:= \Gamma^\mathrm{P}_D \cup \Gamma^\mathrm{P}_N$ and $\Gamma^\mathrm{E}:= \Gamma^\mathrm{E}_D \cup \Gamma^\mathrm{E}_N$, respectively.

In the entire domain, we formulate the momentum balance for the poroelastic region, the conservation of mass for the fluid, and the linear momentum balance for the elastic region. Following the approach in references \cite{adgmr20,anaya22}, we introduce additional variables, namely the total pressure in the poroelastic subdomain and the Herrmann pressure in the elastic subdomain. In addition to the conventional variables of elastic displacement, poroelastic displacement, and fluid pressure, we seek to determine the vector of solid displacements $\bu^\mathrm{E}:\OmE\to \RR^d$ for the non-pay zone, the elastic pressure $\psi^\mathrm{E}:\OmE\to\RR$, the displacement $\bu^\mathrm{P}(t):\OmP\to \RR^d$, the pore fluid pressure $p^\mathrm{P}(t):\OmP\to\RR$, and the total pressure $\psi^\mathrm{P}(t):\OmP\to\RR$ in the reservoir. These variables should satisfy the following equations for each time $t\in (0,t_{\mathrm{final}}]$, given the body loads $\bb^\mathrm{P}(t):\OmP\to \RR^d$, $\bb^\mathrm{E}(t):\OmE\to \RR^d$, and the volumetric source or sink $\ell^\mathrm{P}(t):\OmP\to \RR$:

\begin{figure}[!t]
\begin{center}
\includegraphics[width=0.44\textwidth]{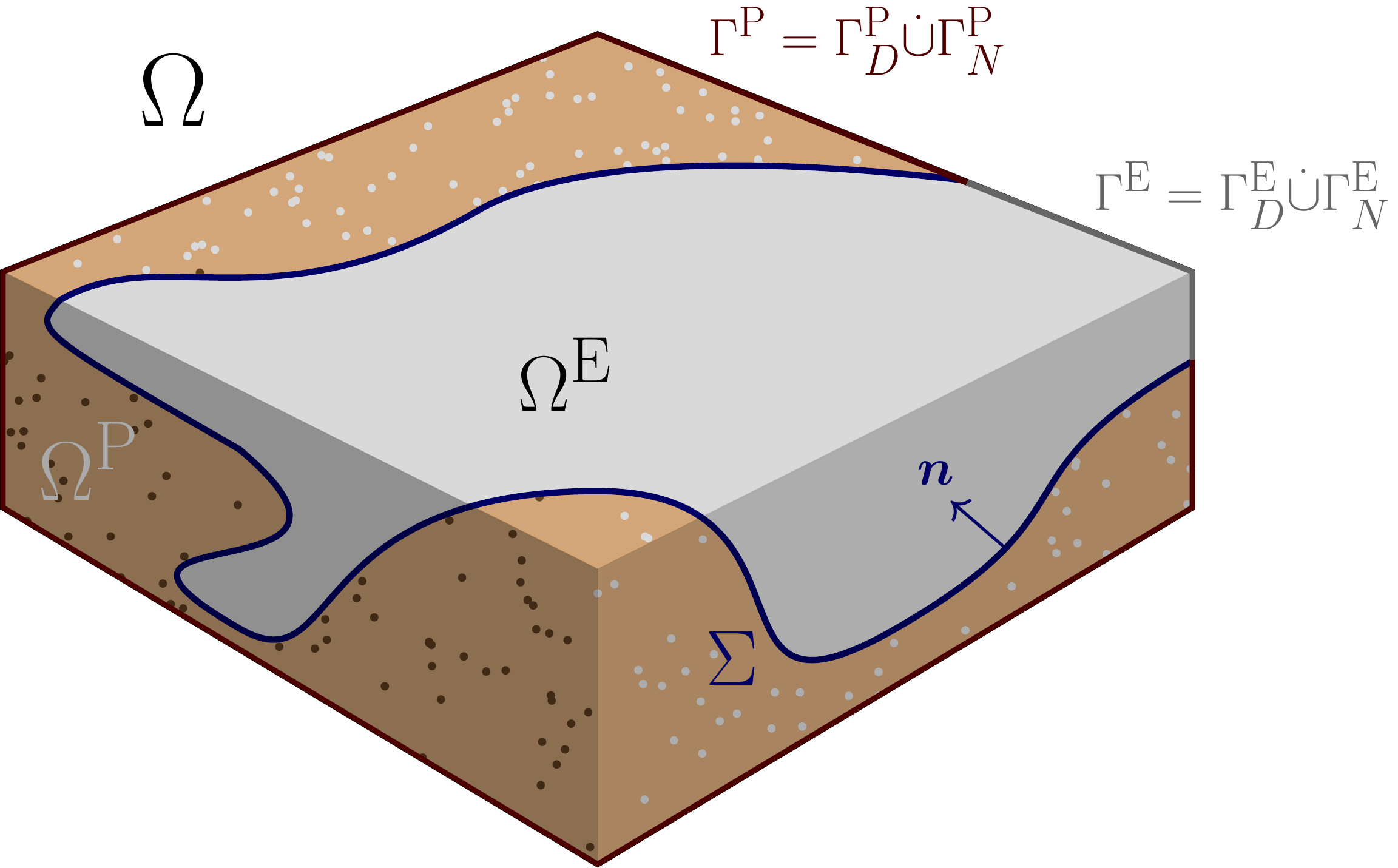}
\end{center}

\vspace{-3mm}
\caption{Sketch of the multidomain configuration including sub-domains,
sub-boundaries, and interface. The poroelastic boundary $\Gamma^{\mathrm{P}}$
is further decomposed into $\Gamma^{\mathrm{P}}_D$ and $\Gamma^{\mathrm{P}}_N$;
likewise the elastic boundary $\Gamma^{\mathrm{E}}$ is split
between $\Gamma^{\mathrm{E}}_D$ and $\Gamma^{\mathrm{E}}_N$.}\label{fig:sketch}
\end{figure}

\begin{subequations}\label{eq:coupled}
\begin{align}
-\bdiv( 2{\mu^\mathrm{P}} \beps(\bu^\mathrm{P})- \psi^\mathrm{P} \bI)&
= \bb^\mathrm{P} & \text{in $\OmP\times(0,t_{\mathrm{final}}]$},\label{eq:poro-momentum}\\
\biggl(c_0
+\frac{\alpha^2}{{\lambda^\mathrm{P}}}\biggr) \partial_t p^\mathrm{P} -\frac{\alpha}{{\lambda^\mathrm{P}}}  \partial_t \psi^\mathrm{P}
- \frac{1}{\eta} \vdiv(\kappa  \nabla p^\mathrm{P} ) &= \ell^\mathrm{P}
& \text{in $\OmP\times(0,t_{\mathrm{final}}]$},\label{eq:Biot} \\
\psi^\mathrm{P} - \alpha p^\mathrm{P} + {\lambda^\mathrm{P}} \vdiv\bu^\mathrm{P} &=
0 & \text{in $\OmP\times(0,t_{\mathrm{final}}]$},\label{eq:poro-constitutive}\\
-\bdiv( 2{\mu^\mathrm{E}} \beps(\bu^\mathrm{E})- \psi^\mathrm{E} \bI)&
= \bb^\mathrm{E} & \text{in $\OmE\times(0,t_{\mathrm{final}}]$},\label{eq:Elast}\\
\psi^\mathrm{E} + {\lambda^\mathrm{E}}\vdiv\bu^\mathrm{E} &=
0 & \text{in $\OmE\times(0,t_{\mathrm{final}}]$}. \label{eq:elast-constitutive}
\end{align}\end{subequations}

In this formulation, the hydraulic conductivity of the porous medium is denoted as $\kappa(\bx)$, while $\eta$ represents the constant viscosity of the interstitial fluid. The storativity coefficient $c_0$ and the Biot--Willis consolidation parameter $\alpha$ are involved as well. Additionally, $\mu^\mathrm{E},\lambda^\mathrm{E}$ and $\mu^\mathrm{P},\lambda^\mathrm{P}$ are the Lamé parameters associated with the constitutive law of the solid in the elastic and poroelastic subdomains, respectively.

The poroelastic stress $\widetilde{\bsigma}$ is defined as the difference between the effective mechanical stress $\bsigma$ and the product of the poroelastic pressure $p^{\rmP}$ and the identity tensor $\bI$, scaled by the parameter $\alpha$. The effective mechanical stress $\bsigma$ consists of the term ${\lambda^\mathrm{P}} (\vdiv \bu^{\rmP})\bI + 2{\mu^\mathrm{P}} \beps(\bu^{\rmP})$, which represents the mechanical stress due to deformation in the poroelastic subdomain, and the non-viscous fluid stress, which is proportional to the fluid pressure and scaled by $\alpha$.

The system of equations is supplemented by mixed-type boundary conditions
\begin{subequations}
\begin{align}
\label{bc:Gamma}
\bu^\mathrm{E} = \cero & &\text{on $\Gamma^\mathrm{E}_D\times(0,t_{\text{final}}]$},\\
\label{bc:Sigma}
[{\lambda^\mathrm{E}}(\vdiv \bu^\mathrm{E})\bI + 2{\mu^\mathrm{E}} \beps(\bu^\mathrm{E})]\nn^{\Gamma} = \cero
& &\text{on $\Gamma^\mathrm{E}_N\times(0,t_{\text{final}}]$},\\
\label{bc:GammaP}
\bu^\mathrm{P} = \cero \quad \text{and} \quad \frac{\kappa}{\eta} \nabla p^\mathrm{P} \cdot\nn^{\Gamma} = 0  & &\text{on $\Gamma^\mathrm{P}_D\times(0,t_{\text{final}}]$},\\
\label{bc:SigmaP}
\widetilde{\bsigma}\nn^{\Gamma} = \cero \quad  \text{and}\quad p^\mathrm{P}=0
& &\text{on $\Gamma^\mathrm{P}_N\times(0,t_{\text{final}}]$},
\end{align}\end{subequations}
transmission conditions that ensure continuity of the medium, the balance of total tractions
 and no flux of fluid at the interface:
 \begin{equation}\label{eq:transmission2}
\bu^\mathrm{P} = \bu^\mathrm{E}, \quad
[2{\mu^\mathrm{P}} \beps(\bu^\mathrm{P})- \psi^\mathrm{P} \bI]\nn
=[2{\mu^\mathrm{E}} \beps(\bu^\mathrm{E})- \psi^\mathrm{E} \bI]\nn, \quad
\frac{\kappa}{\eta}\nabla p^\mathrm{P}\cdot\nn = 0
\quad \text{on $\Sigma\times(0,t_{\text{final}}]$}.
\end{equation}
 The initial conditions for the system are given as $p^{\rmP}(0) =0$ and $\bu^{\rmP}(0) = \cero$ in the domain $\Omega^\mathrm{P}$.

It is important to note that the assumption of homogeneity in the boundary and initial conditions is made solely for the purpose of simplifying the subsequent analysis.
Let us now consider  the following functional  spaces
$$\bV^\mathrm{E}:=[H^1_{\Gamma_D^\rmE}(\Omega^\mathrm{E})]^d,
\quad \bV^\mathrm{P}:=[H^1_{\Gamma_D^\rmP}(\Omega^\mathrm{P})]^d,
\quad Q^\mathrm{P}:= H^1_{\Gamma_D^\rmP}(\OmP),\quad
Z^\mathrm{E}:=L^2(\OmE),\quad Z^\mathrm{P}:=L^2(\OmP).$$

Multiplying \eqref{eq:Biot} by adequate test functions, integrating
by parts (in space) whenever appropriate, and using the boundary
conditions \eqref{bc:Gamma}-\eqref{bc:Sigma}, leads to the following
 weak formulation: For a given $t>0$, find $\bu^\mathrm{P}(t) \in \bV^\mathrm{P},
\bu^\mathrm{E}(t) \in \bV^\mathrm{E}, p^\mathrm{P}(t) \in Q^\mathrm{P},
\psi^\mathrm{E}(t) \in Z^\mathrm{E}, \psi^\mathrm{P}(t) \in Z^\mathrm{P}$ such that
\begin{align*}
2{\mu^\mathrm{P}} \int_{\OmP} \beps(\bu^\mathrm{P}):\beps(\bv^\mathrm{P})
-\int_{\OmP}\psi^\mathrm{P}\vdiv \bv^\mathrm{P}
-\int_{\partial\OmP}[2{\mu^\mathrm{P}} \beps(\bu^\mathrm{P})
- \psi^\mathrm{P}\bI]\nn\cdot\bv^\mathrm{P}  &= \int_{\OmP}\bb^\mathrm{P}\cdot\bv^\mathrm{P},\\
\int_{\OmP}\psi^\mathrm{P}\phi^\mathrm{P}-\alpha\int_{\OmP}p^\mathrm{P}\phi^\mathrm{P}
+{\lambda^\mathrm{P}}\int_{\OmP}\phi^\mathrm{P}\vdiv \bu^\mathrm{P} &=0,\\
\biggl( c_0 +\frac{\alpha^2}{{\lambda^\mathrm{P}}}\biggr)\int_{\OmP}  \partial_t p^\mathrm{P} q^\mathrm{P}
-\frac{\alpha}{{\lambda^\mathrm{P}}} \int_{\OmP} \partial_t \psi^\mathrm{P}q^\mathrm{P}
+\frac{1}{\eta}\int_{\OmP}\kappa \nabla p^\mathrm{P}\cdot\nabla q^\mathrm{P}
-\frac{1}{\eta}\int_{\partial\OmP}(\kappa \nabla p^\mathrm{P}\cdot\nn)q^\mathrm{P}
&=\int_{\OmP}\ell^{\mathrm{P}}q^\mathrm{P},\\
2{\mu^\mathrm{E}}\int_{\OmE} \beps(\bu^\mathrm{E}):\beps(\bv^\mathrm{E})
-\int_{\OmE}\psi^\mathrm{E}\vdiv \bv^\mathrm{E}
-\int_{\partial\OmE}[2{\mu^\mathrm{E}} \beps(\bu^\mathrm{E})
- \psi^\mathrm{E}\bI]\nn^{\partial\OmE}\cdot\bv^\mathrm{E}&=\int_{\OmE}\bb^\mathrm{E}\cdot\bv^\mathrm{E},\\
\int_{\OmE}\psi^\mathrm{E}\phi^\mathrm{E} +{\lambda^\mathrm{E}}\int_{\OmE}\phi^\mathrm{E}\vdiv \bu^\mathrm{E} &=0.
\end{align*}

Thanks to the weak formulation above and to \eqref{eq:transmission2}, we can use a {global displacement} $\bu\in\bV:=[H^1_{\Gamma_D}(\Omega)]^d$, which satisfies $\bu|_{\OmP}=\bu^\mathrm{P}$ and $\bu|_{\OmE}=\bu^\mathrm{E}$. Similarly, we can define a {global total pressure} $\psi\in Z:=L^2(\Omega)$, which combines the total pressure in the poroelastic medium and the elastic hydrostatic pressure in the elastic subdomain, that is $\psi|_{\OmP}=\psi^\mathrm{P}$ and $\psi|_{\OmE}=\psi^\mathrm{E}$. 
Moreover, we extend this approach to define the body load $\bb\in [L^2(\Omega)]^d$, which comprises $\bb|_{\OmP}=\bb^\mathrm{P}$ and $\bb|_{\OmE}=\bb^\mathrm{E}$. Additionally, we introduce the global Lam\'e parameters $\mu$ and $\lambda$, where $\mu$ and $\lambda$ satisfy $\mu|_{\OmP}=\mu^\mathrm{P},\,\lambda|_{\OmP}=\lambda^\mathrm{P}$ and $\mu|_{\OmE}=\mu^\mathrm{E},\,\lambda|_{\OmE}=\lambda^\mathrm{E}$.

By combining the aforementioned steps with the second and third transmission conditions in \eqref{eq:transmission2}, we arrive at the following weak formulation: Find $\bu(t) \in \bV$, $p^\mathrm{P}(t) \in Q^\mathrm{P}$, and $\psi(t)\in Z$ such that
\begin{subequations}
\begin{alignat}{5}
&&   a_1(\bu,\bv)   &&                 &\;+&\; b_1(\bv,\psi)     &=&\;F(\bv)&\quad\forall \bv\in\bV, \label{weak-u}\\
{\tilde{a}_2(\partial_t p^\mathrm{P},q^\mathrm{P})} &\; +&               &&      a_2(p^\mathrm{P},q^\mathrm{P})   &\;-&\;    b_2(q^\mathrm{P}, \, \partial_t \psi) &=&\;G(q^\mathrm{P}) &\quad\forall q^\mathrm{P}\in Q^\mathrm{P}, \label{weak-p}\\
&&b_1(\bu,\phi)  &\;+\;& b_2(p^\mathrm{P},\phi)&\;-&\; a_3(\psi,\phi) &=&0 &\quad\forall\phi\in Z, \label{weak-psi}
\end{alignat}\end{subequations}
where the bilinear forms
$a_1:\bV\times\bV \to \RR$,
$a_2:Q^\mathrm{P}\times Q^\mathrm{P} \to \RR$, $a_3: Z\times Z\to \RR$,
$b_1:\bV \times Z \to \RR$, $b_2:Q^\mathrm{P} \times Z\to \RR$,
and linear functionals $F:\bV \to\RR$, $G:Q^\mathrm{P}\to\RR$, adopt the following form
\begin{gather*}
a_1(\bu,\bv):=  { \int_{\Omega} 2 \mu~ \beps(\bu):\beps(\bv)}, \qquad 
b_1(\bv,\phi):= -\int_{\Omega}\phi\vdiv \bv, \qquad 
F(\bv) := \int_{\Omega} \bb \cdot\bv,\\
\tilde{a}_2(p^\mathrm{P},q^\mathrm{P})  := \biggl( c_0 +\frac{\alpha^2}{{\lambda^\mathrm{P}}}\biggr)
\int_{\Omega^\mathrm{P}}  p^\mathrm{P} q^\mathrm{P} , \qquad 
a_2(p^\mathrm{P},q^\mathrm{P})  :=\frac{1}{\eta}\int_{\Omega^\mathrm{P}}\kappa \nabla p^\mathrm{P}\cdot\nabla q^\mathrm{P},
\\
b_2(p^\mathrm{P},\phi):=  \frac{\alpha}{{\lambda^\mathrm{P}}} \int_{\Omega^\mathrm{P}} p^\mathrm{P}\phi^\mathrm{P}, \qquad
a_3(\psi,\phi):=
 {\int_{\Omega} \frac{1}{\lambda}~ \psi \, \phi,}
\qquad
G(q^\mathrm{P}) := \int_{\Omega^\mathrm{P}} \ell^\mathrm{P} \; q^\mathrm{P}. \nonumber
\end{gather*}
\section{Virtual element approximation} \label{sec:VEapprox}

\subsection{Discrete spaces and degrees of freedom} \label{subsec:VEspaces}
In this section we construct a VEM associated with \eqref{weak-u}-\eqref{weak-psi}.
The main ingredients are detailed for the 2D case, and later in Section~\ref{subsec:3D}
we outline the required modifications that allow the extension to 3D.

We denote by  $\{{\mathcal T}_h^\mathrm{P}\}_h$ and $\{{\mathcal T}_h^\mathrm{E}\}_h$
sequences of polygonal decompositions of the poroelastic and elastic subdomains $\Omega^\mathrm{P}$ and $\Omega^\mathrm{E}$, respectively.
The union of the two non-conforming partitions might have hanging nodes
on the elements at interface. Such polygonal elements are replaced with new elements
same as the original besides the hanging nodes simply regarded as  additional nodes.
The  notation for the union of all the elements with new interfacial
elements is $\{{\mathcal T}_h\}_h$, having mesh-size $h:=\max_{K\in{\mathcal T}_h}h_K$.
By $N^v_K$ we will denote the number of vertices in the polygon $K$, $N^e_K$
will stand for the number of edges on $\partial K$, and $e$ a generic
edge of $\mathcal{T}_h$. For all $e\in \partial K$, we denote by $\boldsymbol{n}_K^e$
the unit normal pointing outwards $K$, $\bt^e_K$ the unit tangent vector along $e$
on $K$, and $V_i$ represents the $i^{th}$ vertex of the polygon $K$.
As in \cite{daveiga-b13} we  suppose regularity of the polygonal meshes in the following sense:
there exists $C_{{\mathcal T}}>0$ such that, for every $h$ and every $K\in {\mathcal T}_h$,
the following holds
	\begin{itemize}
		\item[($A$)] $K\in{\mathcal T}_h$ is star-shaped with respect to every point within a ball of radius $C_{{\mathcal T}}h_K$;
		\item[($\tilde{A}$)] the ratio between the shortest edge and $h_K$ is larger than $C_{{\mathcal T}}$.
	\end{itemize}
	
\begin{remark}\label{hfgrtrdc}
We note that both assumptions above are required
by the subsequent error analysis. It is possible to ignore
assumption $(\tilde{A})$, however resulting in much more
involved proofs that follow the technical tools advanced
in \cite{daveiga17,brenner18} for elliptic problems.
Also, the numerical experiments from Section \ref{sec:numer}
illustrate that even without $(\tilde{A})$,
the accuracy and performance of the method is not
affected by meshes featuring relatively small edges.
\end{remark}

Denoting by $\mathbb{P}_k(K)$ the space of polynomials of degree up to $k$, defined locally on $K\in\cT_h$,
we proceed to characterise the scalar energy projection operator $\Pi_{K}^{\nabla,k}: H^1(K) \rightarrow \mathbb{P}_k(K)$ by the relations
\begin{equation*}
(\nabla (\Pi_{K}^{\nabla,k} q - q), \nabla r_k)_{0,K} = 0, \qquad P^0_K(\Pi_{K}^{\nabla,k} q - q)=0,
\end{equation*}
valid for all $q \in H^1(K)$ and $r_k \in \mathbb{P}_k(K)$, and where
$ P^0_K(q):= \int_{ K} q\;\dx$.
 Next, the vectorial energy operator $\bPi^{\bnabla,k}_{K}: [H^1(K)]^2 \rightarrow [\mathbb{P}_k(K)]^2$ is described as follows
\[(\bnabla (\bPi^{\bnabla,k}_{K} \bv - \bv), \bnabla \mathbf{p}_k)_{0,K} =0,  \qquad
\bPi^{0,0}_K(\bPi^{\bnabla,k}_{K} \bv - \bv) = 0,
\]
for all $\mathbf{p}_k \in [\mathbb{P}_k(K)]^2, \bv \in [H^1(K)]^2$, and $\bPi^{0,0}_K$ is the $L^2$-projection onto constants.

Denoting by $\mathcal{M}_k(K)$ the space of monomials of degree up to $k$  defined locally on  $K$, we can define the local VE spaces for global displacement, fluid pressure, and global pressure for $k \ge 2$ as follows
\begin{align} \label{VE-spaces}
\begin{split}
\bV_h^k(K) & :=   { \Big\{ \bv \in \mathbb{B}^k(\partial K):
\begin{cases}
- \bDelta \bv + \nabla s \in \mathcal{G}_{k-2}^{\perp}(K) ~ \text{ for some } s \in L^2(K) \\
\vdiv \bv \in \mathbb{P}_{k-1}(K),
\end{cases}\Big\}, } \\
Q_h^{k,\rmP}(K) & := \bigl\{ q_h^{\rmP} \in H^1(K) \cap C^0(\partial K): \Delta q_h^{\rmP}|_{K} \in \mathbb{P}_k(K), \, q_h^{\rmP}|_e \in \mathbb{P}_k(e), \forall e \in \partial K, \\ & \quad \quad  \qquad\quad \qquad \quad  \qquad \qquad \; (\Pi_{K}^{\nabla,k} q_h^{\rmP} - q_h^{\rmP}, m_{\alpha})_{0,K} = 0\ \forall m_{\alpha} \in \mathcal{M}_k \backslash\mathcal{M}_{k-2}(K) \bigr\}, \\
Z_h^k(K) & := \mathbb{P}_{k-1}(K),
\end{split} 	\end{align}
where the element boundary space and an auxiliary space are given as
\begin{align*}
\mathbb{B}^k (\partial K) := \Bigl\{ \bv_h \in [H^1(K) \cap C^0(\partial K)]^2: \bv_h|_{e} \in [\mathbb{P}_k(e)]^2 \quad \forall e \in \partial K \Bigr\},\qquad \mathcal{G}_{k}(K):= \bnabla \mathbb{P}_{k+1}(K).
\end{align*}
Note that $K \in \cT_h$ for the local spaces $\bV_h^k(K)$ and $Z_h^k(K)$, and $K \in \cT_h^\rmP$ for space $Q_h^{k,\rmP}(K)$.

The dimension of $\bV_h^k(K)$ is $2 k N^v_K + \frac{(k-1)(k-2)}{2} + \frac{k(k+1)}{2} - 1$, the dimension of $Q_h^{k,\rmP}(K)$ is $k N^v_{K} + \frac{k(k-1)}{2}$, and that of $Z_h^k(K)$ is $\frac{k(k+1)}{2}$. The fluid pressure approximation will follow the VE spaces of degree $k\ge 2$ from \cite{ahmad13}. This facilitates the computation of the $L^2$-projection onto the space of polynomials of degree up to $k$ (which are required in order to define the zero-order discrete bilinear form on $Q_h^{k,\rmP}(K)$).

Next we specify the degrees of freedom associated with \eqref{VE-spaces}. That is, discrete functionals of the type (taking as an example the space for global pressure)
$$(D_i): Z_{h|K} \to \mathbb{R}; \qquad  Z_{h|K} \ni \phi \mapsto D_i(\phi),$$
and we start with the degrees of freedom for the local displacement space $\bV_h^k(K)$, consisting of
($D_v1$) the values of a discrete displacement $\bv_h$ at vertices of the element;
($D_v2$) the values of $\bv_h$ at $(k-1)$ distinct internal Gauss--Lobatto points;
($D_v3$) the moments
	$\frac{1}{|K|} \int_{K} \bv_h \cdot \bg_{k-2}^{\perp}, \quad \forall \bg_{k-2}^{\perp} \in \mathcal{G}_{k-2}^{\perp}(K)$;
and ($D_v4$) the moments
	$\frac{1}{|K|} \int_{K} \vdiv \bv_h~ q_{k-1}, \quad \forall  q_{k-1} \in \mathbb{P}_{k-1}(K) \backslash \mathbb{R}.$

Then we precise the degrees of freedom for the local fluid pressure space $Q_h^{k,\rmP}(K)$:
($D_q1$) the values of $q_h^{\rmP}$ at vertices of the polygonal element;
($D_q2$) the values of $q_h^{\rmP}$ at $(k-1)$ distinct internal Gauss--Lobatto points;
and ($D_q3$) the moments of $q_h^{\rmP}$
$\frac{1}{|K|} \int_{\Omega} q_h^{\rmP}~ m_{\alpha}, \quad \forall m_{\alpha} \in \mathcal{M}_{k-2}(K).$
And similarly, the degrees of freedom for the local global-pressure space $Z_h^k(K)$:
 ($D_z$) the moments:
	$\frac{1}{|K|} \int_K \phi_h~ m_{\alpha}, \quad \forall m_{\alpha} \in \mathcal{M}_{k-1}(K).$

It has been proven elsewhere (e.g.  \cite{ahmad13,daveiga-div17,daveiga-NS18}) that these degrees of
freedom are unisolvent in their respective spaces. We also define global counterparts of the local VE spaces, as follows
\begin{gather*}
\bV_h^k : = \lbrace \bv_h \in \bV : \bv_h |_K \in \bV_h^k(K) \; \forall K \in \mathcal{T}_h \rbrace, \\
Q_h^{k,\rmP} := \lbrace q_h^{\rmP} \in Q^{\rmP} : q_h^{\rmP} |_K \in Q_h^{k,\rmP}(K) \; \forall K \in \mathcal{T}_h^{P} \rbrace,\qquad Z_h^k := \lbrace \phi_h \in Z : \phi_h |_K \in Z_h^k(K) \; \forall K \in \mathcal{T}_h \rbrace.
\end{gather*}
In addition,
$N^\bV$ denotes the number of degrees of freedom for $\bV_h^k$, $N^Q$ the number of degrees of freedom for $Q_h^{k,\rmP}$,
$N^Z$ the number of degrees of freedom for $Z_h^k$,
and $\text{dof}_r(s)$ stands for the $r$-th degree of a given function $s$.

\subsection{Projection operators}
We will use for a generic bilinear form $\aleph(\cdot,\cdot)$, the notation
$\aleph^K(\cdot,\cdot) = \aleph(\cdot,\cdot)|_K.$
 Then we can define the  energy projection $\bPi^{\beps,k}_K : \bV_h^k(K) \rightarrow [\mathbb{P}_k(K)]^2$ such that,
\begin{align*}
& (\bPi^{\beps,k}_K \bv - \bv , \mathbf{r}_k)_{\beps,K} = 0 , \quad m^K(\bPi^{\beps,k}_K \bv - \bv,\mathbf{r}_k) = 0, \quad \forall \bv \in \bV_h^k(K), \mathbf{r}_k \in [\mathbb{P}_k(K)]^2,
\end{align*}	
where
\[
 (\bu, \bv)_{\beps,K} :=\int_K \beps(\bu): \beps (\bv),  \quad
 m^K (\bv,\mathbf{r}_k):= \frac{1}{N^v_K} \sum_{i=1}^{N^v_K} \bv(V_i) \cdot \mathbf{r}_k(V_i), \quad \forall \bu, \bv \in \bV_h^k(K), \ \mathbf{r}_k \in \text{ker}(a_1^K(\cdot,\cdot)).
\]
Then, using the degree of freedom $(D_v1)$, we can readily compute the bilinear form $m^K(\bv, \mathbf{r}_k)$, for all $\mathbf{r}_k \in$ ker($a_1^K(\cdot, \cdot)$), $\bv \in \bV_h^k(K)$.
Next, for all $\bv \in \bV_h^k(K)$, let us consider the localised form
\begin{align*}
a_1^K(\bv, \mathbf{r}_k) = \int_K \beps (\bv): \beps (\mathbf{r}_k) = -\int_K \bv \cdot \bdiv ( \beps(\mathbf{r}_k)) + \int_{\partial K} \bv \cdot(\beps(\mathbf{r}_k) \nn^e_K) .
\end{align*}
One readily sees that $\bdiv(\beps(\mathbf{r}_k))  \in [\mathbb{P}_{k-2}(K)]^2$ and $\beps(\mathbf{r}_{k}) \in [\mathbb{P}_{k-1}(K)]^{2 \times 2}$ for all $\mathbf{r}_k \in [\mathbb{P}_k(K)]^2$, and the second term on the right-hand side is computed by the Gauss--Lobatto rule with the use of degrees of freedom $(D_v1)$-$(D_v2)$. Also, the vectorial polynomial space can be split in terms of the orthogonal space $\mathcal{G}_{k}(K)$ and its complement, that is $$[\mathbb{P}_k(K)]^2 = \mathcal{G}_{k}(K) + \mathcal{G}_{k}^{\perp}(K) = \bnabla \mathbb{P}_{k+1}(K) + \mathcal{G}_{k}^{\perp}(K).$$
Starting from $\bdiv \beps(\mathbf{r}_k) = \nabla \mathbf{p}_{k-1} + \bg_{k-2}^{\perp}$ for some $\mathbf{p}_{k-1} \in \mathbb{P}_{k-1}(K)$, $\bg_{k-2}^{\perp} \in \mathcal{G}_{k-2}^{\perp}(K)$,  an integration by parts gives
\begin{align*}
\int_K \bv \cdot \bdiv ( \beps(\mathbf{r}_k))
= - \int_K \vdiv \bv \, \mathbf{p}_{k-1} + \int_{\partial K} (\bv \cdot \nn_K) \mathbf{p}_{k-1}  + \int_K \bv \cdot \bg_{k-2}^{\perp}.
\end{align*}
The terms on the right-hand side are computed through the explicit evaluation of $\vdiv \bv$ for $\bv \in \bV_h^k(K)$ by making use of the degrees of freedom $(D_v1)$-$(D_v2)$, $(D_v4)$ for the first term; appealing only to $(D_v1)$-$(D_v2)$ to determine $\bv$ on the boundary of each $K$, while the third term is calculated trivially from $(D_v3)$. The precise action of the projection operators $\bPi^{\bnabla,k}_{K}, \Pi^{\nabla,k}_{K}$ (used in the definition of the VE spaces), has been shown in detail in \cite{ahmad13,daveiga-NS18}.

We now define the $L^2$-projection on the scalar space as $\Pi^{0,k}_K: L^2(K) \rightarrow \mathbb{P}_k(K)$ such that
\begin{align*}
(\Pi_{K}^{0,k} q - q, r_k)_{0,K} = 0, \quad q \in L^2(K), r_k \in \mathbb{P}_k(K),
\end{align*}
and we can clearly see that it is computable on the space $Q_h^{k,\rmP}$.

Finally, we consider the $L^2$-projection onto piecewise vector-valued {polynomials},
$\,\smash{\bPi^{0,k}_K}: [L^2(K)]^2  \rightarrow  [\mathbb{P}_{k}(K)]^2$, which is   fully computable on $\bV_h^k(K)$.

\subsection{Discrete bilinear forms and formulations}
For all $\bu_h, \bv_h \in \bV_h^k(K)$ and $p_h^{\rmP}, q_h^{\rmP} \in Q_h^{k,\rmP}(K)$ we now
define the  local discrete bilinear forms
\begin{align*}
a_1^h(\bu_h, \bv_h) |_K &:= a_1^K(\bPi^{\beps,k}_K \bu_h, \bPi^{\beps,k}_K \bv_h) + S_1^K \bigl((\bI-\bPi^{\beps,k}_K)\bu_h, (\bI-\bPi^{\beps,k}_K)\bv_h \bigr),\\
a_2^h(p_h^{\rmP},q_h^{\rmP})|_K &:=
 { (\kappa \Pi^{0,k-1}_K \nabla p_h^{\rmP}, \Pi^{0,k-1}_K \nabla q_h^{\rmP})_{0, K} + S_2^{K} \bigl((I-\Pi^{0,k}_K)p_h^{\rmP}, (I-\Pi^{0,k}_K)q_h^{\rmP} \bigr), }
 \\
\tilde{a}_2^h( p_h^{\rmP}, q_h^{\rmP})|_{K} &:= \tilde{a}_2^{K}( \Pi^{0,k}_{K} p_h^{\rmP},\Pi^{0,k}_{K} q_h^{\rmP}) + S_0^{K} \bigl((I-\Pi^{0,k}_{K})p_h^{\rmP}, (I-\Pi^{0,k}_K)q_h^{\rmP} \bigr),
\end{align*}
where the stabilisations of the bilinear forms $S_1^K(\cdot, \cdot), S_2^K(\cdot, \cdot), S_0^K(\cdot, \cdot)$ acting on the kernel of their respective operators $\bPi^{\beps,k}_K,\;  {\Pi^{0,k}_K},\; \Pi^{0,k}_K$,  are defined as
\begin{align*}
S_1^K(\bu_h,\bv_h) &:= \sigma_1^K\sum_{l=1}^{N^V} \text{dof}_l(\bu_h) \text{dof}_l(\bv_h), \quad \bu_h, \bv_h \in \text{ker}(\bPi^{\beps,k}_K),\\
S_2^K(p_h^{\rmP}, q_h^{\rmP}) &:= \sigma_2^K \sum_{l=1}^{N^Q} \text{dof}_l(p_h^{\rmP}) \text{dof}_l(q_h^{\rmP}), \quad p_h^{\rmP}, q_h^{\rmP} \in \text{ker} ( {\Pi^{0,k}_K}), \\
S_0^K(p_h^{\rmP}, q_h^{\rmP}) &:= \sigma_0^{K} \text{area}(K) \sum_{l=1}^{N^Q} \text{dof}_l(p_h^{\rmP}) \text{dof}_l(q_h^{\rmP}), \quad p_h^{\rmP}, q_h^{\rmP} \in \text{ker} (\Pi^{0,k}_K),
\end{align*}
respectively, where $\sigma_1^K,\sigma_2^K$ and $\sigma_0^K$ are positive multiplicative factors to take into account the magnitude of the physical parameters (and being independent of the mesh size).

Note that  for all $\bv_h \in \bV_h^k(K),\ q_h^{\rmP} \in Q_h^{k,\rmP}(K)$,  we have (see, e.g., \cite{brenner18,daveiga-NS18})
\begin{align} \label{bound:stab}
\begin{split}
\alpha_*a_1^K(\bv_h,\bv_h) &\le S_1^K(\bv_h,\bv_h) \le \alpha^*a_1^K(\bv_h,\bv_h),
\\
\zeta_* a_2^K(q_h^\rmP,q_h^\rmP)& \le S_2^K(q_h^\rmP,q_h^\rmP) \le \zeta^*a_2^K(q_h^\rmP,q_h^\rmP), \\
\tilde{\zeta}_* \tilde{a}_2^K(q_h^\rmP,q_h^\rmP) & \le S_0^K(q_h^\rmP,q_h^\rmP) \le \tilde{\zeta}^*\tilde{a}_2^K(q_h^\rmP,q_h^\rmP),
\end{split}
\end{align}
where $\alpha_*, \alpha^*, \zeta_*, \zeta^*, \tilde{\zeta}_*, \tilde{\zeta}^*$ are positive constants independent of $K$ and $h_K$.
Now, for all $\bu_h, \bv_h \in \bV_h^k,\; p_h^{\rmP},q_h^{\rmP} \in Q_h^{k,\rmP},\;\psi_h, \phi_h\in Z_h^k $, the global discrete bilinear forms are specified as
\begin{gather*}
 a_1^h(\bu_h,\bv_h):= \sum_{K \in \mathcal{T}_h} a_1^h(\bu_h,\bv_h)|_K, \quad
a_2^h(p_h^{\rmP},q_h^{\rmP}):= \sum_{K \in \mathcal{T}_h^\rmP} a_2^h(p_h^{\rmP},q_h^{\rmP})|_K, \quad
 \tilde{a}_2^h(p_h^{\rmP},q_h^{\rmP}):= \sum_{K \in \mathcal{T}_h^\rmP} \tilde{a}_2^h(p_h^{\rmP},q_h^{\rmP})|_K, \\
 b_1(\bv_h, \phi_h) := \sum_{K \in \mathcal{T}_h} b_1^K (\bv_h, \phi_h), \quad a_3(\psi_h, \phi_h) := \sum_{K \in \mathcal{T}_h} a_3^K(\psi_h, \phi_h), \quad b_2(q_h^{\rmP}, \phi_h) := \sum_{K \in \mathcal{T}_h^\rmP} b_2^K(q_h^{\rmP}, \phi_h^{\rmP}).
\end{gather*}

In addition, we observe that
\begin{equation*}
b_2(p_h^{\rmP},\phi_h)= \frac{\alpha}{{\lambda^{\rmP}}} \sum_{K \in \mathcal{T}_h^\rmP} \int_{K} p_h^{\rmP} \phi_h^{\rmP}
=\frac{\alpha}{{\lambda^{\rmP}}}\sum_{K \in \mathcal{T}_h^\rmP}\int_{K} \Pi^{0,k}_K p_h^{\rmP} \phi_h^{\rmP}.
\end{equation*}

On the other hand, the discrete linear functionals, defined on each element $K$, are
\begin{align*}
F^h(\bv_h)|_K:= \int_K  \bb_h(\cdot, t) \cdot \bv_h, \quad \bv_h \in \bV_h^k, \qquad
G^h(q_h^{\rmP})|_K:= \int_K \ell_h^\rmP(\cdot,t) q_h^{\rmP}, \quad q_h^{\rmP} \in Q_h^{k,\rmP},
\end{align*}
where the discrete load and volumetric source are given by
$$\bb_h(\cdot, t)|_K:=  {\bPi^{0,k-2}_K}
\bb (\cdot,t), \quad \ell_h^\rmP(\cdot, t)|_K:= \Pi^{0,k}_K \ell^\rmP(\cdot, t). $$
In view of \eqref{bound:stab}, the discrete bilinear forms $a_1^h(\cdot, \cdot)$, $\tilde{a}_2^h(\cdot, \cdot) $ and $a_2^h(\cdot, \cdot)$ are coercive and bounded in the following manner \cite{daveiga-b13,vb2016,burger_acom21}
\begin{gather*}
a_1^h(\bu_h, \bu_h) \ge 2 \, {\mu_{\min}} \min \lbrace 1, \alpha_* \rbrace\,\, \| \beps(\bu_h) \|_0^2,  \qquad
a_2^h(q_h^{\rmP}, q_h^{\rmP})  \ge \min \lbrace 1, \zeta_* \rbrace\, \frac{\kappa_{\min}}{\eta} \, \| \nabla q_h^{\rmP} \|_{0, \OmP}^2, \\
a_2^h(p_h^{\rmP}, q_h^{\rmP})  \le \max \lbrace 1, \zeta^* \rbrace\, \frac{\kappa_{\max}}{\eta} \, \| \nabla p_h^{\rmP} \|_{0, \OmP} \| \nabla q_h^{\rmP} \|_{0, \OmP}, \quad 
a_1^h(\bu_h, \bv_h)  \le 2\, {\mu_{\max}} \max \lbrace 1, \alpha^* \rbrace\, \| \beps(\bu_h) \|_0 \| \beps (\bv_h) \|_0,\\
\tilde{a}_2^h(q_h^{\rmP}, q_h^{\rmP})  \ge \min \lbrace 1, \tilde{\zeta}_* \rbrace\, \Big( c_0 +  \frac{\alpha^2}{{\lambda^{\rmP}}} \Big) \, \| q_h^{\rmP} \|_{0, \OmP}^2, \qquad
\tilde{a}_2^h(p_h^{\rmP}, q_h^{\rmP}) \le \max \lbrace 1, \tilde{\zeta}^* \rbrace\, \Big( c_0 +  \frac{\alpha^2}{{\lambda^{\rmP}}} \Big)\, \| p_h^{\rmP} \|_{0, \OmP} \| q_h^{\rmP} \|_{0, \OmP},
\end{gather*}
for all $\bu_h,\bv_h \in \bV_h^k$, $p_h^{\rmP},q_h^{\rmP} \in Q_h^{k,\rmP}${, where $\mu_{\min}:=\min\{\mu^\mathrm{E},\mu^\mathrm{P}\}, \mu_{\max}:=\max\{\mu^\mathrm{E},\mu^\mathrm{P}\}$}.
Moreover, from the definitions of the operators~$\bPi^{0,k}_K$ and~$\Pi^{0,k}_K$,
we may deduce that the following bounds hold for the linear functionals:
\begin{equation*}
F^h(\bv_h)  \le \| \bb\|_0 \|\bv_h\|_0, \qquad G^h(q_h) \le \| \ell^\rmP \|_{0, \OmP} \| q_h^{\rmP} \|_{0, \OmP},
  \qquad \text{for all  $\bv_h \in \bV_h^k$, $q_h^{\rmP} \in Q_h^{k,\rmP}$}.
\end{equation*}

We also recall that the bilinear form $b_1(\cdot,\cdot)$ satisfies the following discrete inf-sup condition on $\bV_h^k\times Z_h^k$: there exists $\tilde{\beta}>0$, independent of $h$, such that (see \cite{daveiga-NS18}),
\begin{equation}\label{discr-infsup}
\sup_{\bv_h (\neq 0) \in \bV_h^k} \frac{b_1(\bv_h, \phi_h)}{\| \bv_h \|_1}
\ge \tilde{\beta} \| \phi_h \|_0 \quad \text{for all $\phi_h \in Z_h^k$.}
\end{equation}

The semidiscrete VE formulation is now defined as follows: For all $t>0$, given $\bu_h(0)$, $p_h^{\rmP}(0)$, $\psi_h(0)$, find $\bu_h \in \bL^2((0,t_{\text{final}}],\bV_h^k)$, $ p_h^{\rmP} \in L^2((0,t_{\text{final}}],Q_h^{k,\rmP}), \;\psi_h \in L^2((0,t_{\text{final}}],Z_h^k)$ with $ \partial_t p_h^{\rmP} \in L^2((0,t_{\text{final}}], Q_h^{k,\rmP})$, $\partial_t \psi_h \in L^2((0,t_{\text{final}}],Z_h^k)$ such that
\begin{subequations} \label{eq3.7}
	\begin{alignat}{5}
	&&   a_1^h(\bu_h,\bv_h)   &&                 &\;+&\; b_1(\bv_h,\psi_h)     &=&\;F^h(\bv_h)&\quad\forall \bv_h \in \bV_h^k, \label{weak-uh}\\
	\tilde{a}_2^h(\partial_t p_h^{\rmP},q_h^{\rmP}) &\; +&               &&      a_2^h(p_h^{\rmP},q_h^{\rmP})   &\;-&\;   b_2( q_h^{\rmP}, \partial_t \psi_h)  &=&\;G^h(q_h^{\rmP}) &\quad\forall q_h^{\rmP} \in Q_h^{k,\rmP}, \label{weak-ph}\\
	&&b_1(\bu_h,\phi_h)  &\;+\;& b_2(p_h^{\rmP},\phi_h)&\;-&\; a_3(\psi_h,\phi_h) &=&0 &\quad\forall\phi_h \in Z_h^k. \label{weak-psih}
	\end{alignat}
\end{subequations}
The following result will be used for   the stability and  error estimates for the semi-discrete scheme without employing the Gronwall's inequality. For a detailed proof, we refer to \cite{burger_acom21} and \cite[Lemma 3.2]{lee19}.

\begin{lemma} \label{lem:semi-Xbound}
	Let $X(t)$ be a continuous function,  and consider the non-negative functions $F(t)$ and $D(t)$ satisfying, for constants $C_0 \ge 1$ and $C_1 > 0$, the bound
	\begin{align*}
	X^2(t) \le C_0 X^2(0) + C_1 X(0) + D(t) + \int_0^t F(s) X(s) \, \mathrm{d}s, \quad \forall ~ t \in [0,t_{\mathrm{final}}].
	\end{align*}	
	Then, for each $t \in [0,t_{\mathrm{final}}]$, there holds
	\begin{align}
	\label{semi-Xbound}
	X(t) \lesssim X(0) + \max \left\lbrace  C_1 + \int_0^t F(s) \, \mathrm{d}s,~ D(t)^{1/2} \right\rbrace.
	\end{align}
\end{lemma}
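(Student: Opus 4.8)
The plan is to convert the integral inequality into a purely algebraic (quadratic) inequality for the running maximum of $X$, thereby bypassing Gronwall's lemma and the exponential factor it would produce. First I would introduce the nondecreasing envelope
\[
\hat{X}(t) := \max_{0 \le s \le t} X(s),
\]
which is well defined and attained for every $t \in [0,t_{\mathrm{final}}]$ because $X$ is continuous on a compact interval. Fixing $t$ and letting $t^\ast \in [0,t]$ be a point where $X$ attains its maximum over $[0,t]$, I would evaluate the hypothesis at $\tau = t^\ast$.

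The key step is to linearise the memory term: since $F \ge 0$ and $X(s) \le \hat{X}(t)$ for all $s \le t^\ast \le t$,
\[
\int_0^{t^\ast} F(s) X(s)\,\mathrm{d}s \le \hat{X}(t) \int_0^t F(s)\,\mathrm{d}s .
\]
Using in addition that $D$ is nondecreasing in the situations of interest (or, in general, replacing $D(t)$ by $\max_{s\le t} D(s)$), so that $D(t^\ast) \le D(t)$, the hypothesis becomes
\[
\hat{X}(t)^2 = X(t^\ast)^2 \le A + B\,\hat{X}(t), \qquad A := C_0 X^2(0) + C_1 X(0) + D(t), \quad B := \int_0^t F(s)\,\mathrm{d}s .
\]
This is a quadratic inequality in the single scalar $\hat{X}(t)$, and the nonnegativity of $A$ and $B$ yields $\hat{X}(t) \le \tfrac12\bigl(B + \sqrt{B^2 + 4A}\bigr)$.

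Finally I would repeatedly invoke subadditivity of the square root, $\sqrt{x+y}\le\sqrt{x}+\sqrt{y}$, to obtain $\hat{X}(t) \le B + \sqrt{A} \le B + \sqrt{C_0}\,X(0) + \sqrt{C_1 X(0)} + \sqrt{D(t)}$, and then Young's inequality $\sqrt{C_1 X(0)} \le \tfrac12 C_1 + \tfrac12 X(0)$ to separate the mixed term. Collecting the $X(0)$-contributions (here $C_0 \ge 1$ makes $\sqrt{C_0}\,X(0)$ harmless) and bounding $C_1 + B + \sqrt{D(t)} \le 2\max\{C_1 + B,\,D(t)^{1/2}\}$ gives $X(t) \le \hat{X}(t) \lesssim X(0) + \max\{C_1 + \int_0^t F\,\mathrm{d}s,\,D(t)^{1/2}\}$, which is exactly \eqref{semi-Xbound}. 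The only genuine subtlety — and the step I expect to require the most care — is the treatment of the memory integral through the envelope $\hat{X}$, since it is precisely this device that linearises the integral term and allows the algebraic quadratic argument to replace a Gronwall estimate; the remaining manipulations are elementary scalar inequalities.
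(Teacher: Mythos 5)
Your proof is correct and follows essentially the same route as the proof the paper relies on (the paper itself gives no proof, deferring to \cite{burger_acom21} and \cite[Lemma 3.2]{lee19}): evaluate the hypothesis at a point $t^\ast$ where the running maximum of $X$ over $[0,t]$ is attained, linearise the memory term by $\int_0^{t^\ast} F(s)X(s)\,\mathrm{d}s \le \hat{X}(t)\int_0^t F(s)\,\mathrm{d}s$, and solve the resulting scalar quadratic inequality, finishing with subadditivity of the square root and Young's inequality. Your side remark that $D(t^\ast)\le D(t)$ requires $D$ to be nondecreasing is a correct reading of an implicit hypothesis of the cited lemma (it holds in all of the paper's applications, where $D$ consists of constants, suprema, and time integrals of nonnegative quantities), so no repair is needed.
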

Squaring both sides of \eqref{semi-Xbound} and using  Cauchy--Schwarz inequality, we can rewrite \eqref{semi-Xbound} as
\begin{align}
\label{new:semi-Xbound}
X(t)^2 \lesssim X(0)^2 + \max \left\lbrace  C_1^2 + \int_0^t F(s)^2 \, \mathrm{d}s,~ D(t) \right\rbrace.
\end{align}

Now we establish the stability of \eqref{eq3.7}.

\begin{theorem}[Stability of the semi-discrete problem]\label{thm:stab-semi}
	Let $(\bu_h(t), p_h^{\rmP}(t), \psi_h(t))$ solve
	\eqref{eq3.7}  for each $t \in (0,t_{\mathrm{final}}]$. Then  there exists a constant $C>0$ independent of {$c_0, \lambda$,} and $h$,  such that
	\begin{align} \label{bound:semi-stability} \begin{split}
	& {\mu_{\min}} \| \beps(\bu_h(t)) \|_0^2 + c_0 \| p_h^{\rmP}(t) \|_{0,\OmP}^2  + \| \psi_h(t) \|_0^2 + \frac{\kappa_{\min}}{\eta} \int_0^t\| \nabla p_h^{\rmP}(s) \|_{0,\OmP}^2 \, \mathrm{d}s  \\
	&   \le C \bigg( {\mu_{\min}} \| \beps(\bu_h(0))\|_0^2 + {\Big(c_0 + \frac{\alpha^2}{{\lambda^{\rmP}}} \Big) } \| p_h^{\rmP}(0) \|_{0,\OmP}^2 + {\frac{1}{{\lambda_{\min}}}} \| \psi_h(0) \|_0^2 +
	\int_0^t \| \partial_t \bb(s)\|_0^2 \, \mathrm{d}s \\
	& \qquad \quad
	+ \sup_{t \in [0,t_{\mathrm{final}}] }  \| \bb(t)\|_0^2 + \int_0^t \| \ell^{\rmP}(s) \|_{0,\OmP}^2 \, \mathrm{d}s \bigg).
	\end{split} \end{align}	
\end{theorem}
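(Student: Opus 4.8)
The plan is to run an energy argument tailored to the total–pressure structure, and to recover the $\lambda$–robust bound on $\|\psi_h\|_0$ separately via the discrete inf–sup condition \eqref{discr-infsup}. Since \eqref{weak-uh} and \eqref{weak-psih} carry no time derivative, I would first differentiate them in time (justified by the time regularity of the semidiscrete solution inherited from the data) and then test with the natural multipliers: take $\bv_h=\partial_t\bu_h$ in \eqref{weak-uh}, $q_h^{\rmP}=p_h^{\rmP}$ in \eqref{weak-ph}, and $\phi_h=\psi_h$ in the time–differentiated \eqref{weak-psih}. Using the symmetry of $a_1^h,a_3,\tilde a_2^h$, the identity $b_2(\partial_t p_h^{\rmP},\psi_h)+b_2(p_h^{\rmP},\partial_t\psi_h)=\tfrac{d}{dt}b_2(p_h^{\rmP},\psi_h)$, and eliminating $b_1(\partial_t\bu_h,\psi_h)$ between the first and third relations, the three equations add up to the single energy identity
\[
\frac{d}{dt}\Big[\tfrac12 a_1^h(\bu_h,\bu_h)+\tfrac12 a_3(\psi_h,\psi_h)+\tfrac12\tilde a_2^h(p_h^{\rmP},p_h^{\rmP})-b_2(p_h^{\rmP},\psi_h)\Big]+a_2^h(p_h^{\rmP},p_h^{\rmP})=F^h(\partial_t\bu_h)+G^h(p_h^{\rmP}).
\]
Integrating over $[0,t]$ isolates the energy $\mathcal{E}(t)$ (the bracket) together with the accumulated dissipation $\int_0^t a_2^h(p_h^{\rmP},p_h^{\rmP})$ on the left.

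Second, I would prove that $\mathcal{E}(t)$ is nonnegative and already controls $\mu_{\min}\|\beps(\bu_h)\|_0^2$ and $c_0\|p_h^{\rmP}\|_{0,\OmP}^2$ robustly. The key is the observation recorded after the definition of $b_2$: the cross term only sees $\Pi^{0,k}_K p_h^{\rmP}$, and for that component the coefficient in $\tilde a_2^h$ is the \emph{full}, unreduced factor $c_0+\alpha^2/\lambda^{\rmP}$. Hence the coupled $2\times2$ form in $(\Pi^{0,k}p_h^{\rmP},\psi_h)$ on $\OmP$ completes to a square,
\[
\tfrac12\tilde a_2^h(p_h^{\rmP},p_h^{\rmP})+\tfrac12 a_3(\psi_h,\psi_h)-b_2(p_h^{\rmP},\psi_h)\ \ge\ \tfrac{c_0}{2}\min\{1,\tilde\zeta_*\}\|p_h^{\rmP}\|_{0,\OmP}^2+\tfrac{1}{2\lambda^{\rmP}}\|\psi_h-\alpha\,\Pi^{0,k}p_h^{\rmP}\|_{0,\OmP}^2,
\]
so that $\mathcal{E}(t)\ge\tfrac12 a_1^h(\bu_h,\bu_h)+\tfrac{c_0}{2}\min\{1,\tilde\zeta_*\}\|p_h^{\rmP}\|_{0,\OmP}^2\ge0$, and coercivity of $a_1^h$ supplies the $\mu_{\min}\|\beps(\bu_h)\|_0^2$ contribution. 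At $t=0$ the same Young splitting of $-b_2(p_h^{\rmP}(0),\psi_h(0))$ bounds $\mathcal{E}(0)$ by the first three terms on the right-hand side of \eqref{bound:semi-stability} (absorbing $\tfrac12 a_1^h$ at $t=0$ into $\mu_{\min}\|\beps(\bu_h(0))\|_0^2$ at the cost of a $\mu$–dependent constant).

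Third, I would estimate the forcing. The term $\int_0^t F^h(\partial_t\bu_h)$ is integrated by parts in time, giving the endpoint contributions $F^h(\bu_h(t))-F^h(\bu_h(0))$ and $-\int_0^t\!\int_\Omega\partial_t\bb_h\cdot\bu_h$; by stability of $\bPi^{0,k-2}_K$, Korn and Poincar\'e, the term at $t$ splits by Young between a small multiple of $\mu_{\min}\|\beps(\bu_h(t))\|_0^2$ (absorbed on the left) and $\sup_t\|\bb(t)\|_0^2$, the term at $0$ yields the linear-in-$X(0)$ piece, and the remaining integral takes the form $\int_0^t F(s)X(s)\,\mathrm{d}s$ with $F\sim\|\partial_t\bb\|_0$. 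For $\int_0^t G^h(p_h^{\rmP})$ I would avoid $c_0$ (which may vanish) and instead use Poincar\'e on $\OmP$, $\|p_h^{\rmP}\|_{0,\OmP}\lesssim\|\nabla p_h^{\rmP}\|_{0,\OmP}$, to absorb a fraction of the dissipation $\int_0^t a_2^h(p_h^{\rmP},p_h^{\rmP})$ and leave $\int_0^t\|\ell^{\rmP}\|_{0,\OmP}^2$. Setting $X(t)^2$ equal to the resulting left-hand side, the inequality matches the hypotheses of Lemma~\ref{lem:semi-Xbound}, whose conclusion \eqref{new:semi-Xbound} delivers the estimate without Gronwall.

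Finally, the $\lambda$–robust control of $\|\psi_h(t)\|_0^2$ is recovered from \eqref{discr-infsup}: writing $b_1(\bv_h,\psi_h)=F^h(\bv_h)-a_1^h(\bu_h,\bv_h)$ from \eqref{weak-uh} and using boundedness of $a_1^h$ gives $\|\psi_h(t)\|_0\lesssim\|\bb(t)\|_0+\mu_{\max}\|\beps(\bu_h(t))\|_0$, which is then added to the previous bound, the $\beps$ part being reabsorbed into the already-controlled energy at the cost of a $\mu$–dependent constant. I expect the main obstacle to be precisely this robustness issue, coupled with the nonnegativity in the second step: the energy functional only controls $\tfrac1\lambda\|\psi_h\|_0^2$, and it is the combination of the completed square (which keeps the cross term harmless for \emph{every} $c_0\ge0$ and $\lambda$) with the inf–sup estimate that produces the stated bound with a constant independent of $c_0$ and $\lambda$.
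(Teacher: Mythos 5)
Your proposal is correct and follows essentially the same route as the paper, which omits the proof and defers to \cite{burger_acom21}: the same test functions ($\bv_h=\partial_t\bu_h$, $q_h^{\rmP}=p_h^{\rmP}$, $\phi_h=\psi_h$ in the time-differentiated third equation), the same completed square $\tfrac{c_0}{2}\|\Pi^{0,k}_Kp_h^{\rmP}\|^2+\tfrac{1}{2\lambda^{\rmP}}\|\alpha\Pi^{0,k}_Kp_h^{\rmP}-\psi_h\|^2$ exploiting that $b_2$ only sees the projected pressure (visible in the analogous lower bound in Appendix~\ref{proof-semi}), time integration by parts on $F^h(\partial_t\bu_h)$ plus Poincar\'e to absorb $G^h$ into the dissipation, Lemma~\ref{lem:semi-Xbound} in place of Gronwall, and the inf-sup recovery \eqref{discr-infsup} of the $\lambda$-robust bound on $\|\psi_h(t)\|_0$. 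The only cosmetic slip is the opening sentence suggesting \eqref{weak-uh} is also differentiated in time, whereas your displayed energy identity correctly uses the undifferentiated \eqref{weak-uh} tested with $\partial_t\bu_h$ and differentiates only \eqref{weak-psih}.
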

The proof is directly followed by   using the similar arguments used in the proof given in  \cite[Theorem 3.1]{burger_acom21} and Lemma \ref{lem:semi-Xbound}. Therefore, we skip the proof. Next,  by using \eqref{bound:semi-stability}, we established the well-posedness of the semi-discrete scheme.

\begin{corollary}[Solvability of the discrete problem]\label{coer1}
	Problem \eqref{eq3.7} has a unique solution in $\bV_h^k \times Q_h^{k,\rmP} \times Z_h^k$ for a.e. $t \in (0,t_{\mathrm{final}}]$.
\end{corollary}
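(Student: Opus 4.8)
The plan is to use the finite dimensionality of $\bV_h^k\times Q_h^{k,\rmP}\times Z_h^k$ and recast \eqref{eq3.7} as a linear differential--algebraic system, which I then reduce to an explicit linear ODE in the fluid--pressure coefficients alone. Fixing bases for the three spaces and collecting the degrees of freedom into coefficient vectors $\vec U(t),\vec P(t),\vec\Psi(t)$, equations \eqref{weak-uh}, \eqref{weak-ph} and \eqref{weak-psih} read $A_1\vec U+B_1^{\top}\vec\Psi=\vec F$, $\tilde A_2\dot{\vec P}+A_2\vec P-B_2^{\top}\dot{\vec\Psi}=\vec G$, and $B_1\vec U+B_2\vec P-A_3\vec\Psi=0$, where $A_1,A_2,\tilde A_2,A_3,B_1,B_2$ denote the matrices of the corresponding (bi)linear forms. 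Only the fluid pressure (and, through $b_2$, the total pressure) is differentiated in time, so the first and third relations are pure algebraic constraints.

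First I would eliminate $\vec U$ and $\vec\Psi$. Since $a_1^h(\cdot,\cdot)\ge 2\mu_{\min}\min\{1,\alpha_*\}\|\beps(\cdot)\|_0^2$ is coercive on $\bV_h^k$ (Korn's inequality together with the Dirichlet constraint built into $\bV$), the matrix $A_1$ is symmetric positive definite and $\vec U=A_1^{-1}(\vec F-B_1^{\top}\vec\Psi)$. Substituting into the third relation gives $M\vec\Psi=B_2\vec P+B_1A_1^{-1}\vec F$ with $M:=A_3+B_1A_1^{-1}B_1^{\top}$. Because $a_3(\phi,\phi)=\int_\Omega\tfrac1\lambda\phi^2$ is a weighted $L^2$ inner product, $A_3$ is symmetric positive definite and $B_1A_1^{-1}B_1^{\top}$ is positive semidefinite, so $M$ is invertible and $\vec\Psi=M^{-1}(B_2\vec P+B_1A_1^{-1}\vec F)$. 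Differentiating in time and inserting into the second relation yields the reduced system
\[
\mathcal M_{\mathrm{red}}\,\dot{\vec P}+A_2\vec P=\vec G+B_2^{\top}M^{-1}B_1A_1^{-1}\dot{\vec F},\qquad \mathcal M_{\mathrm{red}}:=\tilde A_2-B_2^{\top}M^{-1}B_2,
\]
where the time derivative of the data is legitimate under the same regularity of $\bb$ already used in \eqref{bound:semi-stability}.

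The main obstacle, and the only delicate point, is to prove that $\mathcal M_{\mathrm{red}}$ is positive definite. For $\vec P\neq0$ with associated $p_h^{\rmP}\in Q_h^{k,\rmP}$ I would use the Schur-complement identity $\vec P^{\top}\mathcal M_{\mathrm{red}}\vec P=\tilde a_2^h(p_h^{\rmP},p_h^{\rmP})-(B_2\vec P)^{\top}M^{-1}(B_2\vec P)$. Bounding $M^{-1}\preceq A_3^{-1}$ and computing $(B_2\vec P)^{\top}A_3^{-1}(B_2\vec P)\le\tfrac{\alpha^2}{\lambda^{\rmP}}\|\Pi^{0,k}_Kp_h^{\rmP}\|_{0,\OmP}^2$ gives $\vec P^{\top}\mathcal M_{\mathrm{red}}\vec P\ge c_0\|\Pi^{0,k}_Kp_h^{\rmP}\|_{0,\OmP}^2+S_0^K(\,\cdot\,,\cdot\,)\ge0$; this already settles the generic case $c_0>0$. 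The technical heart is the degenerate limit $c_0\to0$, where this lossy bound can vanish on piecewise polynomials: there I would return to the sharp identity, note that equality in $M^{-1}\preceq A_3^{-1}$ forces $b_1(\bv_h,\tilde\psi)=0$ for all $\bv_h$ with $\tilde\psi:=A_3^{-1}B_2\vec P\in Z_h^k$, and invoke the discrete inf--sup condition \eqref{discr-infsup} to conclude $\tilde\psi=0$ and hence $p_h^{\rmP}=0$, a contradiction. Thus the inf--sup stability of $b_1$ is exactly the ingredient that keeps $\mathcal M_{\mathrm{red}}$ invertible robustly in both $c_0$ and $\lambda$.

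With $\mathcal M_{\mathrm{red}}$ invertible, the reduced equation is the explicit linear ODE $\dot{\vec P}=\mathcal M_{\mathrm{red}}^{-1}(\vec G+B_2^{\top}M^{-1}B_1A_1^{-1}\dot{\vec F}-A_2\vec P)$, affine in $\vec P$ with time-dependent right-hand side; Carath\'eodory/Picard--Lindel\"of theory then furnishes a unique absolutely continuous $\vec P$ on $[0,t_{\mathrm{final}}]$ satisfying it for a.e. $t$, from the prescribed (consistent) datum $p_h^{\rmP}(0)$, and $\vec\Psi,\vec U$ — hence $\psi_h,\bu_h$ — are recovered uniquely from the algebraic relations. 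Finally, uniqueness can alternatively be read directly off Theorem \ref{thm:stab-semi}: applied to the difference of two solutions sharing the same data and forcing, estimate \eqref{bound:semi-stability} forces $\|\beps(\bu_h)\|_0$, $\|\psi_h\|_0$ and $\int_0^t\|\nabla p_h^{\rmP}\|_{0,\OmP}^2$ to vanish, whence $\bu_h\equiv\cero$, $\psi_h\equiv0$, and, via the Poincar\'e inequality on $Q^{\rmP}$, $p_h^{\rmP}\equiv0$.
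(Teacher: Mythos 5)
Your proposal is correct in substance, but it takes a genuinely different route from the paper's own argument. The paper stays at the level of the differential--algebraic system \eqref{eq:matrixform}: it reduces unique solvability to the invertibility of the single matrix $\mathcal{A}+\mathcal{B}$ (regularity of the matrix pencil tested at $\lambda=1$), and verifies this by showing that the homogeneous static problem \eqref{weak-new} has only the trivial solution --- testing with $(\bu_h,p_h^{\rmP},\psi_h)$ itself yields $\mu_{\min}\|\beps(\bu_h)\|_0^2+c_0\|p_h^{\rmP}\|_{0,\OmP}^2+\frac{\kappa_{\min}}{\eta}\|\nabla p_h^{\rmP}\|_{0,\OmP}^2\le 0$, and then Korn, Poincar\'e and the inf--sup condition \eqref{discr-infsup} finish the job. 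Note that in this static problem the stiffness contribution $a_2^h$ appears alongside $\tilde a_2^h$, so Poincar\'e on $Q^{\rmP}$ kills $p_h^{\rmP}$ even when $c_0=0$; the paper therefore never needs your Schur-complement analysis. You instead carry out a genuine index reduction: eliminating $\vec U$ and $\vec\Psi$ through the SPD matrices $A_1$ and $M=A_3+B_1A_1^{-1}B_1^{\top}$, you must prove the reduced mass matrix $\mathcal M_{\mathrm{red}}=\tilde A_2-B_2^{\top}M^{-1}B_2$ positive definite \emph{on its own}, with no help from $A_2$, and this is exactly where your equality-case/inf--sup argument enters. What each approach buys: the paper's proof is shorter and leans on a standard DAE fact; yours is more constructive (an explicit linear ODE, Carath\'eodory well-posedness, absolute continuity of $\vec P$, and an explicit demonstration that invertibility is robust as $c_0\to0$), at the mild extra cost of requiring $\dot{\vec F}$, i.e.\ time-differentiability of $\bb$ --- harmless here, since Theorem~\ref{thm:stab-semi} already assumes $\partial_t\bb\in L^2$. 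One compressed step you should expand in the degenerate case: equality in $M^{-1}\preceq A_3^{-1}$ together with \eqref{discr-infsup} gives $\tilde\psi=0$ and hence only $B_2\vec P=0$, i.e.\ $\Pi^{0,k-1}_K p_h^{\rmP}=0$ elementwise, not yet $p_h^{\rmP}=0$; to conclude you must also invoke the vanishing of the other nonnegative terms in the sharp identity, namely $S_0^K\bigl((I-\Pi^{0,k}_K)p_h^{\rmP},(I-\Pi^{0,k}_K)p_h^{\rmP}\bigr)=0$ (which forces $p_h^{\rmP}=\Pi^{0,k}_K p_h^{\rmP}$ by unisolvence of the degrees of freedom) and of the gap $\frac{\alpha^2}{\lambda^{\rmP}}\bigl(\|\Pi^{0,k}_K p_h^{\rmP}\|_{0,K}^2-\|\Pi^{0,k-1}_K p_h^{\rmP}\|_{0,K}^2\bigr)$, which together do yield $p_h^{\rmP}=0$. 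With that step made explicit, your argument is complete, and your closing observation --- that uniqueness also follows directly from \eqref{bound:semi-stability} applied to the difference of two solutions, using Poincar\'e on the $\int_0^t\|\nabla p_h^{\rmP}\|_{0,\OmP}^2$ term --- is exactly the mechanism the paper exploits in its static form.
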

\begin{proof}	
	Let  $\bu_h(t) := \sum_{i=1}^{N^{\bV}} \mathcal{U}_i(t) \xi_i$, $p_h^{\rmP}(t) := \sum_{j=1}^{N^Q} \mathcal{P}_j(t) \chi_j^{\rmP}$, $\psi_h(t) := \sum_{l=1}^{N^Z} \mathcal{Z}_l(t) \Phi_l$ where $\xi_i ( 1 \le i \le N^{\bV})$, $\chi_j^{\rmP} (1 \le j \le N^Q)$, $\Phi_l ( 1 \le l \le N^Z$, where $N^Z$ coincides with the number of elements in $\mathcal{T}_h$) are the basis functions for the spaces $\bV_h^k, Q_h^{k,\rmP}, Z_h^k$ respectively. Then  \eqref{eq3.7} can be written as
	\begin{align} \label{eq:matrixform}
	 \mathcal{A}
	\begin{pmatrix}
	\dot{\mathcal{U}}(t) \\ \dot{\mathcal{P}}(t) \\ \dot{\mathcal{Z}}(t)
	\end{pmatrix}
	+
	 \mathcal{B}
	\begin{pmatrix}
	\mathcal{U}(t) \\ \mathcal{P}(t) \\ \mathcal{Z}(t)
	\end{pmatrix}
	= \begin{pmatrix}
	\boldsymbol{F}(t) \\ \boldsymbol{G}(t) \\ \boldsymbol{0}
	\end{pmatrix}.
	\end{align}
System \eqref{eq:matrixform} possesses a unique solution if the matrix $\mathcal{A+B}$ is invertible. We then consider: For $(L_1^h, L_2^h, L_3^h) \in (\bV_h^k \times Q_h^{k,\rmP} \times Z_h^k)'$, find $\bu_h \in \bV_h^k$, $p_h^{\rmP}~\in~Q_h^{k,\rmP}$, $\psi_h \in Z_h^k$ such that
	\begin{subequations}\label{weak-new}
		\begin{alignat}{5}
		&&   a_1^h(\bu_h,\bv_h)   &&                 &\;+&\; b_1(\bv_h,\psi_h)     &=&\;L_1^h(\bv_h)&\quad\forall \bv_h \in \bV_h^k, \label{weak-uh-new}\\
		\tilde{a}_2^h( p_h^{\rmP},q_h^{\rmP}) &\; +&               &&      a_2^h(p_h^{\rmP},q_h^{\rmP})   &\;-&\;   b_2( q_h^{\rmP},  \psi_h)  &=&\;L_2^h(q_h^{\rmP}) &\quad\forall q_h^{\rmP} \in Q_h^{k,\rmP}, \label{weak-ph-new}\\
		&&b_1(\bu_h,\phi_h)  &\;+\;& b_2(p_h^{\rmP},\phi_h)&\;-&\; a_3(\psi_h,\phi_h) &=& L_3^h(\phi_h) &\quad\forall\phi_h \in Z_h^k. \label{weak-psih-new}
		\end{alignat}\end{subequations}
The unique solvability of \eqref{weak-new} (and the invertibility of  $\mathcal{A+B}$) is established   showing that the homogenous form of  \eqref{weak-new} has only the trivial solution. Setting  $L_1^h(\bv_h)=L_2^h(q_h^{\rmP})=L_3^h(\phi_h)=0$, and  choosing  $\bv_h= \bu_h, \phi_h=\psi_h, q_h^{\rmP}=p_h^{\rmP}$ in  \eqref{weak-new}, we  obtain the following bound  by proceeding   as in the proof of \eqref{bound:semi-stability}
	\begin{align*}
	{\mu_{\min}} \| \beps(\bu_h) \|_0^2 + c_0 \| p_h^{\rmP} \|_{0, \OmP}^2  +  \frac{\kappa_{\min}}{\eta} \| \nabla p_h^{\rmP} \|_{0, \OmP}^2 \le 0,
	\end{align*}
	and hence an application of the Poincar\'{e} and Korn inequalities together with the inf-sup condition of $b_1(\cdot, \cdot)$ yields $\bu_h=\cero$, $p_h^{\rmP}=0$ and $\psi_h=0$.
\end{proof}

Next, we discretise in time using the backward Euler method with  constant step size $\Delta t = t_{\mathrm{final}} / N$ and denote any function $f$ at $t=t_n$ by $f^n$. The fully discrete scheme reads: Given $\bu_h^0$, $p_h^{0,P}$, $\psi_h^0$, and for $t_n=n\Delta t$, $n=1, \dots, N$, find $\bu_h^{n} \in \bV_h^k$, $p_h^{n, \rmP} \in Q_h^{k,\rmP}$ and $\psi_h^{n} \in Z_h^k$ such that
\begin{subequations}  \label{fd-scheme}
	\begin{align}
		a_1^h(\bu_h^{n},\bv_h)  + b_1(\bv_h,\psi_h^{n})    & = F^{h,n}(\bv_h), \label{weak-uhn}\\
		\tilde{a}_2^h \left( p_h^{n,\rmP} , q_h^{\rmP} \right) + \Delta t a_2^h( p_h^{n,\rmP},q_h^{\rmP}) - b_2 \left( q_h^{\rmP}, \psi_h^{n} \right)  
	 &
	= \Delta t G^{h,n} (q_h^{\rmP})+\tilde{a}_2^h \left(  p_h^{n-1, \rmP} , q_h^{\rmP} \right) -b_2 \left( q_h^{\rmP}, \psi_h^{n-1} \right), \label{weak-phn}\\
		b_1(\bu_h^{n},\phi_h)  + b_2(p_h^{n, \rmP},\phi_h) - a_3(\psi_h^{n},\phi_h) & = 0, \label{weak-psihn}
	\end{align}
\end{subequations} 	
for all $\bv_h \in \bV_h^k,q_h^{\rmP} \in Q_h^{k,\rmP},\phi_h \in Z_h^k$;  where
\begin{align*}
F^{h,n}(\bv_h)|_K:= \int_K  \bb_h(t^n) \cdot \bv_h, \quad
G^{h,n}(q_h^{\rmP})|_K:= \int_{K \in \cT_h^{\rmP}} \ell_h^{\rmP}(t^n) q_h^{\rmP}.
\end{align*}
Next we provide the following auxiliary result proved in \cite{burger_acom21}.
\begin{lemma} \label{lem:Xn-bound}
	Let $X_n, ~ 1 \le n \le N$ be a finite sequence of functions with non-negative constants $C_0, C_1$ and finite sequences~$D_n$
	and~$G_n$ such that
$	X_n^2 \le C_0 X_0^2 + C_1 X_0 + D_n + \sum_{j=1}^n G_j X_j$, for all $1 \le n \le N$. 	Then
	\[X_n^2 \lesssim X_0^2 + \max \left\lbrace C_1^2 + \sum_{j=1}^n G_j^2,~ D_n  \right\rbrace \quad  \text{for all $1 \le n \le N$.}\]
\end{lemma}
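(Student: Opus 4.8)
The plan is to mirror the continuous argument behind Lemma~\ref{lem:semi-Xbound}, replacing integrals by sums, and to reach the stated squared bound in two stages: first a linear-in-$X_n$ estimate (the discrete analogue of \eqref{semi-Xbound}), and then a squaring step identical in spirit to the passage from \eqref{semi-Xbound} to \eqref{new:semi-Xbound}. Throughout I would use that the quantities $X_n$ are non-negative (they represent norms in the applications), so that an upper bound on $X_n$ may be squared freely.

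First I would introduce the running maximum $M_n := \max_{1 \le m \le n} X_m$ and abbreviate $S_n := \sum_{j=1}^n G_j$. Since each $G_j \ge 0$ and $X_j \le M_n$ for every $j \le n$, the only problematic term in the hypothesis is controlled by $\sum_{j=1}^m G_j X_j \le M_n S_n$, uniformly in $m \le n$. Using that $D_n$ is non-decreasing in $n$ (or, if not, replacing it by $\max_{m \le n} D_m$, which only affects constants), the hypothesis gives $X_m^2 \le C_0 X_0^2 + C_1 X_0 + D_n + M_n S_n$ for every $m \le n$. Taking the maximum over $m$ converts this into the self-referential quadratic inequality $M_n^2 \le C_0 X_0^2 + C_1 X_0 + D_n + M_n S_n$.

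The key step is to solve this quadratic in $M_n$. I would invoke the elementary fact that $y^2 \le a + b y$ with $a,b \ge 0$ forces $y \le \tfrac12\big(b + \sqrt{b^2+4a}\big) \le b + \sqrt a$ (since $\sqrt{b^2+4a} \le b + 2\sqrt a$), with $a = C_0 X_0^2 + C_1 X_0 + D_n$ and $b = S_n$. This yields $X_n \le M_n \le S_n + \sqrt{C_0 X_0^2 + C_1 X_0 + D_n}$. Splitting the root via $\sqrt{a_1+a_2+a_3} \le \sqrt{a_1}+\sqrt{a_2}+\sqrt{a_3}$ and using $\sqrt{C_1 X_0} \le \tfrac12(C_1 + X_0)$, the right-hand side collapses to $X_n \lesssim X_0 + \max\{C_1 + S_n,\, D_n^{1/2}\}$, the discrete counterpart of \eqref{semi-Xbound}, with the hidden constant depending only on $C_0$.

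Finally I would square this linear bound and apply $(C_1 + S_n)^2 \le 2C_1^2 + 2S_n^2$ together with the discrete Cauchy--Schwarz estimate $S_n^2 = \big(\sum_{j=1}^n G_j\big)^2 \le n \sum_{j=1}^n G_j^2 \le N \sum_{j=1}^n G_j^2$, exactly as the text does in passing from \eqref{semi-Xbound} to \eqref{new:semi-Xbound}, to obtain $X_n^2 \lesssim X_0^2 + \max\{C_1^2 + \sum_{j=1}^n G_j^2,\, D_n\}$. The main obstacle is the term $\sum_{j=1}^n G_j X_j$, which is only linear in the unknown and therefore does not directly fit a discrete Gronwall scheme; the maximum trick is precisely what turns it into a solvable quadratic in $M_n$. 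I would also flag that the constant absorbed into $\lesssim$ may depend on $N$ (equivalently on $t_{\mathrm{final}}$, through the Cauchy--Schwarz step) but stays independent of $X_0$, $C_1$, $D_n$ and the sequence $G_j$, which is all that is needed downstream.
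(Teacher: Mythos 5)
Your proof is correct and takes essentially the same route as the argument the paper invokes (it cites \cite{burger_acom21} rather than proving the lemma in-text): the running-maximum reduction to the quadratic inequality $M_n^2 \le a + b\,M_n$, the root bound $M_n \le b + \sqrt{a}$, and the final Cauchy--Schwarz squaring step are precisely the discrete counterpart of \cite[Lemma 3.2]{lee19} on which that proof is built. One caveat worth noting: your parenthetical claim that replacing $D_n$ by $\max_{m \le n} D_m$ ``only affects constants'' is not accurate for genuinely non-monotone $D_n$ (the stated bound with $D_n$ itself can then fail, e.g.\ with $D_1$ large, $D_2 = 0$, $G_1 = 1$), so the non-decreasing assumption in your main line is in fact needed --- it is implicit in the lemma and satisfied in all of the paper's applications, where $D_n$ is a sum of non-negative terms up to index $n$.
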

Employing Lemma \ref{lem:Xn-bound} and following analogously to the proof of the Theorem 3.2 of \cite{burger_acom21}, it is easy to show the proposed fully discrete scheme \eqref{fd-scheme} is stable, and we formally state the result below.    
\begin{theorem}[Stability of the fully-discrete problem]\label{thm:stab-fully}
	The unique solution to  \eqref{fd-scheme} depends   continuously  on data.
	More precisely, there exists a constant~$C$ independent of ${c_0}, \lambda, h$ and  $\Delta t$ such that
	\begin{align*}
	& {\mu_{\min}} \| \beps (\bu_h^n) \|_0^2 + c_0 \| p_h^{n,\rmP} \|_{0, \OmP}^2 + \| \psi_h^n \|_0^2  + (\Delta t)\frac{\kappa_{\min}}{\eta} \sum_{j=1}^n \| \nabla p_h^{j, \rmP} \|_{0, \OmP}^2 \nonumber\\
	& \le C \bigg( {\mu_{\min}} \| \beps (\bu_h^0)\|_0^2 + {\Big(c_0 + \frac{\alpha^2}{{\lambda^{\rmP}}} \Big)} \| p_h^{0, \rmP} \|_{0, \OmP}^2 + \| \psi_h^0 \|_0^2 + \max_{0 \le j \le n} \| \bb^j \|_0^2 
	\\
	& \qquad \quad
	+ (\Delta t) \sum_{j=1}^n \Big( \| \partial_{t} \bb^j \|_0^2 + \|\ell^{j, \rmP}\|_{0, \OmP}^2 \Big) + (\Delta t)^2 \int_{0}^{t_{\mathrm{final}}} \| \partial_{tt} \bb(s) \|_0^2 \, \mathrm{d}s \bigg),\nonumber
	\end{align*}
	with $\bb^m:=\bb(\cdot,t^m)$ and $\ell^{m,\rmP}:=\ell^{\rmP}(\cdot,t^m)$, for $m=1,\ldots,n$.
\end{theorem}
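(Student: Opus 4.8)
The plan is to reproduce, at the fully discrete level, the energy argument that underlies the semi-discrete bound \eqref{bound:semi-stability}, with time differentiation replaced by backward differences and with the continuous Grönwall device of Lemma~\ref{lem:semi-Xbound} replaced by its discrete counterpart Lemma~\ref{lem:Xn-bound}. Because the momentum balance \eqref{weak-uhn} and the constraint \eqref{weak-psihn} are quasi-static, I would first subtract each of them written at two consecutive levels $t_n$ and $t_{n-1}$, obtaining a backward-difference momentum relation $a_1^h(\bu_h^n-\bu_h^{n-1},\bv_h)+b_1(\bv_h,\psi_h^n-\psi_h^{n-1})=(F^{h,n}-F^{h,n-1})(\bv_h)$ together with the analogous differenced constraint; the mass conservation \eqref{weak-phn} already carries the increment $p_h^{n,\rmP}-p_h^{n-1,\rmP}$ and $\psi_h^n-\psi_h^{n-1}$ explicitly.

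Next I would test these three relations with $\bv_h=\bu_h^n$, $q_h=p_h^{n,\rmP}$ and $\phi_h=\psi_h^n$, respectively, and add them so that the off-diagonal couplings mimic the continuous cancellation. The key algebraic tool is the elementary identity $2\,\aleph(x_n-x_{n-1},x_n)=\aleph(x_n,x_n)-\aleph(x_{n-1},x_{n-1})+\aleph(x_n-x_{n-1},x_n-x_{n-1})\ge\aleph(x_n,x_n)-\aleph(x_{n-1},x_{n-1})$, valid for any symmetric positive semidefinite form $\aleph$, which I would apply to $a_1^h$, $\tilde a_2^h$ and $a_3$ so that each difference-tested symmetric form becomes a telescoping energy increment. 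Using a discrete product rule in time, the $b_1$ coupling cancels while the $b_2$ coupling collapses to the single boundary cross term $b_2(p_h^{n,\rmP},\psi_h^n)$; together with the coercivity and boundedness bounds recalled above (cf.\ \eqref{bound:stab}) this controls, at each step, the increment of the elastic energy $\mu_{\min}\|\beps(\bu_h^n)\|_0^2$, of the storage $(c_0+\alpha^2/\lambda^{\rmP})\|p_h^{n,\rmP}\|_{0,\OmP}^2$ and of the $a_3$-energy, plus the dissipation $\Delta t\,(\kappa_{\min}/\eta)\|\nabla p_h^{n,\rmP}\|_{0,\OmP}^2$.

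The delicate cross term $b_2(p_h^{n,\rmP},\psi_h^n)=\tfrac{\alpha}{\lambda^{\rmP}}\int_{\OmP}p_h^{n,\rmP}\psi_h^n$ I would absorb by Young's inequality into \emph{half} of the $\tilde a_2^h$-energy (which dominates $\tfrac{\alpha^2}{\lambda^{\rmP}}\|p_h^{n,\rmP}\|_{0,\OmP}^2$) and half of the $a_3$-energy ($=\tfrac{1}{\lambda^{\rmP}}\|\psi_h^n\|^2_{0,\OmP}$); since both energies carry the same $\lambda^{\rmP}$ weight, this step is robust in $\lambda$. Summing over $j=1,\dots,n$ collapses the telescoping sums to endpoint-minus-initial values, and the momentum forcing $\sum_j(F^{h,j}-F^{h,j-1})(\bu_h^j)$ is recast through a (twice iterated) discrete summation by parts in time, producing the endpoint term $\max_{0\le j\le n}\|\bb^j\|_0^2$, the first-difference term $(\Delta t)\sum_j\|\partial_t\bb^j\|_0^2$ and the second-difference remainder $(\Delta t)^2\int_0^{t_{\mathrm{final}}}\|\partial_{tt}\bb(s)\|_0^2\,\mathrm{d}s$, while the sources yield $(\Delta t)\sum_j\|\ell^{j,\rmP}\|_{0,\OmP}^2$. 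Casting the accumulated inequality as $X_n^2\le C_0X_0^2+C_1X_0+D_n+\sum_{j=1}^nG_jX_j$ and invoking Lemma~\ref{lem:Xn-bound} then removes the implicit $X_j$-dependence and delivers the stated bound with constants independent of $c_0,\lambda,h,\Delta t$.

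Finally, since $\lambda$-robustness forbids controlling $\psi_h^n$ through the $\lambda$-weighted form $a_3$, the un-weighted norm $\|\psi_h^n\|_0$ on the left-hand side would be recovered in a separate step from the discrete inf-sup condition \eqref{discr-infsup} applied to \eqref{weak-uhn}, namely $\tilde\beta\|\psi_h^n\|_0\le\sup_{\bv_h\ne 0}b_1(\bv_h,\psi_h^n)/\|\bv_h\|_1=\sup_{\bv_h\ne0}(F^{h,n}(\bv_h)-a_1^h(\bu_h^n,\bv_h))/\|\bv_h\|_1$, bounded by $\|\bb^n\|_0$ and the already-controlled elastic energy. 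The main obstacle I anticipate is precisely this book-keeping: making $b_1$ cancel and $b_2$ telescope in the fully discrete setting needs a careful discrete product rule, absorbing the surviving cross term without spoiling $\lambda$- and $c_0$-robustness is sensitive, and generating the genuine second-difference term in $\bb$ (rather than a mere first difference) hinges on treating the quasi-static momentum forcing through iterated summation by parts in time.
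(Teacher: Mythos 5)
Your overall architecture is the right one, and it coincides with the paper's intended route (the proof is deferred to \cite[Theorem~3.2]{burger_acom21}, with the same machinery displayed for the error equations in Appendix~\ref{proof-full}): a discrete energy argument with the telescoping inequality $(f^n-f^{n-1},f^n)\ge\frac12(\|f^n\|_0^2-\|f^{n-1}\|_0^2)$, summation by parts plus Taylor expansion to convert the quasi-static forcing into the terms $\max_{0\le j\le n}\|\bb^j\|_0^2$, $(\Delta t)\sum_j\|\partial_t\bb^j\|_0^2$ and $(\Delta t)^2\int_0^{t_{\mathrm{final}}}\|\partial_{tt}\bb\|_0^2$, Lemma~\ref{lem:Xn-bound} in place of Gr\"onwall, and recovery of the unweighted $\|\psi_h^n\|_0$ through the inf-sup condition \eqref{discr-infsup}. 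However, your central assembly step fails as described. You difference \emph{both} \eqref{weak-uhn} and \eqref{weak-psihn} and test with $\bu_h^n$ and $\psi_h^n$: the differenced momentum relation tested with $\bu_h^n$ produces $b_1(\bu_h^n,\psi_h^n-\psi_h^{n-1})$, while the differenced constraint tested with $\pm\psi_h^n$ produces $\pm b_1(\bu_h^n-\bu_h^{n-1},\psi_h^n)$; these never cancel. Subtracting leaves the antisymmetric remainder $b_1(\bu_h^{n-1},\psi_h^n-\psi_h^{n-1})-b_1(\bu_h^n-\bu_h^{n-1},\psi_h^{n-1})$, which neither telescopes nor has a sign, while adding flips the sign of $a_3(\psi_h^n-\psi_h^{n-1},\psi_h^n)$ and destroys the energy. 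The correct pairing is \emph{asymmetric} (cf.\ the choices around \eqref{estimate-eq1}--\eqref{estimate-eq3}): keep the momentum equation at level $n$ and test it with the increment $\bu_h^n-\bu_h^{n-1}$, difference only the constraint and test with $-\psi_h^n$; then the $b_1$ terms cancel identically, $a_1^h(\bu_h^n,\bu_h^n-\bu_h^{n-1})$ still telescopes by symmetry, and the forcing appears as $\sum_jF^{h,j}(\bu_h^j-\bu_h^{j-1})$, to which your summation-by-parts/Taylor treatment applies verbatim.

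Your handling of the $b_2$ coupling has a related flaw. With the correct pairing the surviving cross terms are the two increment terms $-b_2(p_h^{n,\rmP}-p_h^{n-1,\rmP},\psi_h^n)-b_2(p_h^{n,\rmP},\psi_h^n-\psi_h^{n-1})$; the discrete product rule does \emph{not} collapse their sum over $n$ to the single endpoint term $b_2(p_h^{n,\rmP},\psi_h^n)$, but leaves in addition the remainder $\sum_{j}b_2(p_h^{j,\rmP}-p_h^{j-1,\rmP},\psi_h^j-\psi_h^{j-1})$, which is sign-indefinite, carries no factor of $\Delta t$, and is not controlled robustly in $c_0$ and $\lambda$ by the quantities you retain after your Young absorption. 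The paper's device avoids both the remainder and any Young step: complete the square \emph{inside} the quadratic form, namely
\begin{align*}
&\tilde{a}_2^h(\delta p,p)+a_3(\delta\psi,\psi)-b_2(\delta p,\psi)-b_2(p,\delta\psi)\\
&\quad\gtrsim c_0\,(\delta p,p)_{0,\OmP}
+\frac{1}{\lambda^{\rmP}}\sum_{K\in\cT_h^{\rmP}}\Bigl(\alpha^2\bigl(\delta(I-\Pi^{0,k}_K)p,(I-\Pi^{0,k}_K)p\bigr)_{0,K}
+\bigl(\delta(\alpha\Pi^{0,k}_K p-\psi),\,\alpha\Pi^{0,k}_K p-\psi\bigr)_{0,K}\Bigr)
+\frac{1}{\lambda^{\rmE}}(\delta\psi,\psi)_{0,\OmE},
\end{align*}
where $\delta$ denotes the backward difference. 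Every term on the right is of the form $(\delta f,f)$ and hence telescopes with constants independent of $c_0$ and $\lambda$, leaving only $c_0\|p_h^{n,\rmP}\|_{0,\OmP}^2$ on the left exactly as in the statement; the final application of Lemma~\ref{lem:Xn-bound} together with \eqref{bound:discrete-inf-sup} then delivers the robust bound. Until the pairing and the completed square are put in place, your argument does not close.
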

We can establish the following result by proceeding on the same  lines as the proof of Corollary \ref{coer1}.

\begin{corollary}[Solvability of the fully discrete problem]
	The problem \eqref{fd-scheme} has a unique solution in $\bV_h^k \times Q_h^{k,\rmP} \times Z_h^k$.
\end{corollary}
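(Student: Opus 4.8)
The plan is to exploit the time-marching structure of \eqref{fd-scheme}: for each fixed $n$ the scheme is a \emph{square} linear system for the triple $(\bu_h^n,p_h^{n,\rmP},\psi_h^n)\in\bV_h^k\times Q_h^{k,\rmP}\times Z_h^k$, in which the data (the functionals $F^{h,n},G^{h,n}$ together with the quantities $p_h^{n-1,\rmP},\psi_h^{n-1}$ carried over from the previous step) enter only on the right-hand side. Since the number of equations equals the number of unknowns, existence and uniqueness follow as soon as one shows that the associated homogeneous system admits only the trivial solution. I would therefore set $F^{h,n}=0$, $G^{h,n}=0$, $p_h^{n-1,\rmP}=0$ and $\psi_h^{n-1}=0$ in \eqref{weak-uhn}--\eqref{weak-psihn} and argue that the resulting solution vanishes; the statement then follows by induction over $n=1,\dots,N$ starting from the prescribed initial data $\bu_h^0,p_h^{0,\rmP},\psi_h^0$.

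The key observation is that this homogeneous single-step system is structurally identical to the auxiliary system \eqref{weak-new} analysed in Corollary \ref{coer1}, the only difference being that the diffusion form $a_2^h(\cdot,\cdot)$ now appears scaled by the positive factor $\Delta t$. Consequently the whole kernel analysis carries over with only a harmless rescaling. Concretely, I would test \eqref{weak-uhn} with $\bv_h=\bu_h^n$, \eqref{weak-phn} with $q_h^{\rmP}=p_h^{n,\rmP}$, and \eqref{weak-psihn} with $\phi_h=\psi_h^n$, and then combine the three identities exactly as in the derivation of \eqref{bound:semi-stability}, with the goal of reproducing the sign-definite estimate obtained there. The mixed couplings $b_1$ and $b_2$ cancel pairwise, and invoking the coercivity bounds \eqref{bound:stab} the target is
\[
\mu_{\min}\|\beps(\bu_h^n)\|_0^2+c_0\|p_h^{n,\rmP}\|_{0,\OmP}^2+\Delta t\,\frac{\kappa_{\min}}{\eta}\|\nabla p_h^{n,\rmP}\|_{0,\OmP}^2\le 0 .
\]
Every term on the left is non-negative, so each must vanish; an application of the Korn and Poincar\'e inequalities (using the homogeneous Dirichlet conditions built into $\bV_h^k$ and $Q_h^{k,\rmP}$) then forces $\bu_h^n=\cero$ and $p_h^{n,\rmP}=0$.

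It remains to recover $\psi_h^n$. With $\bu_h^n=\cero$ the first equation \eqref{weak-uhn} collapses to $b_1(\bv_h,\psi_h^n)=0$ for all $\bv_h\in\bV_h^k$, so the discrete inf--sup condition \eqref{discr-infsup} immediately yields $\|\psi_h^n\|_0\le\tilde{\beta}^{-1}\sup_{\bv_h(\neq 0)\in\bV_h^k} b_1(\bv_h,\psi_h^n)/\|\bv_h\|_1=0$, whence $\psi_h^n=0$. Thus the homogeneous system has only the trivial solution, the coefficient matrix of \eqref{fd-scheme} at time level $n$ is invertible, and the step is uniquely solvable; induction over $n$ finishes the argument.

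The main obstacle is the same one encountered in the semi-discrete case: ensuring that the chosen linear combination of the three tested equations produces a genuinely \emph{sign-definite} quantity, so that after the cancellation of the $b_1$- and $b_2$-couplings only coercive contributions remain and the bound above is not merely a balance between positive terms. Once this combination is organised exactly as in Corollary \ref{coer1}, and the robustness with respect to $c_0$, $\lambda$ and the permeability is tracked through \eqref{bound:stab}, the remaining steps — non-negativity, Korn/Poincar\'e, and the inf--sup recovery of $\psi_h^n$ — are routine, and the extra positive factor $\Delta t$ in front of $a_2^h$ plays no essential role.
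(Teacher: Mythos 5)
Your proposal is correct and follows essentially the same route as the paper, which proves this corollary simply by ``proceeding on the same lines as the proof of Corollary~\ref{coer1}'': freeze the time level $n$, move $F^{h,n}$, $G^{h,n}$ and the previous-step data to the right-hand side, show the homogeneous square system has only the trivial solution via the sign-definite combination of the three tested equations (with $a_2^h$ harmlessly scaled by $\Delta t$), and recover $\psi_h^n=0$ from the inf--sup condition \eqref{discr-infsup}. One small imprecision: the $b_2$-coupling does not cancel pairwise but appears twice with the same sign and is absorbed by the completion-of-squares with $\tilde{a}_2^h$ and $a_3$ (as in the stability proof), which you in fact acknowledge when you defer the organisation of the combination to Corollary~\ref{coer1}.
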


\subsection{Three dimensional virtual element space}\label{subsec:3D}
Likewise to the two-dimensional decomposition of domain $\Omega$, we discretise each codomain $\OmP$ and $\OmE$ independently into polyhedron elements $K$ having faces $\mathit f$, as $\cT_h^\rmP$ and $\cT_h^\rmE$ respectively then the unification of such polyhedrons on whole domain is denoted as $\cT_h$. By $N^{\mathit f}_K$ we will denote the number of faces $\mathit f$ in the polyhedron $K$. Now, we mention below the sufficient assumptions for obtaining the error analysis in addition with the earlier stated assumptions ($A$) and ($\tilde{A}$) (for two-dimension):
\begin{itemize}
	\item[($A_1$)] every face $\mathit f$ of $K$ is star-shaped within a ball of radius $\tilde{C}_{{\mathcal T}}h_{\mathit f}$;
	\item[($\tilde{A}_1$)] the ratio between shortest edge $h_e$ and $h_{\mathit f}$ as well as $h_{\mathit f }$ and $h_K$ is larger than $\tilde{C}_{\mathcal T}$, that is
	$$h_e \ge \tilde{C}_{\mathcal T} h_{\mathit f} \ge \tilde{C}_{\mathcal T}^2 h_K.$$
\end{itemize}
Note that  assumption ($\tilde{A}_1$) implies  assumption ($\tilde{A}$), and hence it is not required to add it separately.

Now, we define the local three dimensional VE spaces, introduced in \cite{ahmad13,daveiga-div17}, for the displacement on each element $K \in \cT_h$, and fluid pressure on each element $K \in \cT_h^\rmP$ as
\begin{align*} 
\bV_h^k(K) & := { \Big\{ \bv \in [H^1( K) \cap C^0(\partial K)]^3:
\begin{cases}
- \bDelta \bv + \nabla s \in \mathcal{G}_{k-2}^{\perp}(K) ~ \text{ for some } s \in L^2(K) \\
\vdiv \bv \in \mathbb{P}_{k-1}(K), \quad \bv |_f \in \bW_k(f) \,\, \forall f \in \partial K
\end{cases} \Big\}, } \\
Q_h^{k,\rmP}(K) & := \bigl\{ q^{\rmP} \in H^1( K) \cap C^0(\partial K): \Delta q^{\rmP}|_{K} \in \mathbb{P}_k(K), \quad q^{\rmP} |_{f} \in Q_h^{k,\rmP}(f)  \,\, \forall f \in \partial K, \\
&  \hspace{4.6cm} (\Pi_{K}^{\nabla,k} q^{\rmP} - q^{\rmP}, m_{\alpha})_{0,K} = 0\ \forall m_{\alpha} \in \mathcal{M}_k \backslash\mathcal{M}_{k-2}(K) \bigr\},
	\end{align*}
where the element boundary spaces
$Q_h^{k,\rmP}(\mathit f)$ taken from definition \eqref{VE-spaces} for two-dimensional element, and $\bW_h^k(\mathit f)$ on each face $\mathit f$ is given as
\begin{align*}
\bW_h^k(\mathit f) & := \bigl\{ \bw \in [H^1( \mathit f)]^3: \Delta \bw|_{\mathit f} \in [\mathbb{P}_k(\mathit f)]^3, \, \bw |_{\partial \mathit f} \in [\mathbb{P}_k(\partial \mathit f)]^3, \\ & \quad \quad  \qquad\quad \qquad \quad  \quad \; (\Pi_{\mathit f}^{\nabla,k} \bw - \bw, m_{\alpha})_{0,\mathit f} = 0\ \forall m_{\alpha} \in [\mathcal{M}_k \backslash\mathcal{M}_{k-2}(\mathit f)]^3 \bigr\}.
\end{align*}

The degrees of freedom for the local displacement space $\bV_h^k(K)$, consist of
($D_v1$)-($D_v4$), ($D_v5$) the moments:
$ \int_{\mathit f} \bv_h \cdot {\mathit m}_{\alpha}, \quad \forall {\mathit m}_{\alpha} \in [\mathcal{M}_{k-2}(\mathit f)]^3$. Then for the local fluid pressure space $Q_h^{k,\rmP}(K)$, we have: ($D_q1$)-($D_q3$), ($D_q4$) the moments of $q^{\rmP}$:
$\int_{\mathit f} q^{\rmP}~ m_{\alpha}, \quad \forall m_{\alpha} \in \mathcal{M}_{k-2}(\mathit f).$

Note that the local space $Z_h^k(K)$ for the global-pressure remains same, given in \eqref{VE-spaces}. Therefore the degrees of freedom for the local global-pressure space $Z_h^k(K)$ is ($D_z$).

The dimension of $\mathbb{P}_{k-2}(K)$ is $\frac{(k-1)k(k+1)}{6}$ and  $\mathcal{G}^{\perp}_{k-2}(K)$ is $\frac{(k-2)k(k+1)}{2} + 1$ implying that the dimension of $\bV_h^k(K)$ is $3 k N^v_K + \frac{3 k(k-1)}{2} N^f_K +  \frac{(k-2)k(k+1)}{2} + \frac{k(k+1)(k+2)}{6}$; the dimension of $Q_h^{k,\rmP}(K)$ is $k N^v_{K}  + \frac{k(k-1)}{2} N^f_K + \frac{k(k^2-1)}{6}$, and that of $Z_h^k(K)$ is $\frac{k(k+1)(k+2)}{6}$.
Then the global VE spaces are constructed keeping the same description for global spaces given in Section \ref{subsec:VEspaces}. Likewise, the degrees of freedom for these global spaces are combination of local degrees of freedom over all elements $K$ except the ones containing domain boundary vertices and nodes on domain boundary faces. The computation of these degrees of freedom is detailed in  \cite{ahmad13,daveiga-div17} and the analysis is outlined in the following section. 

\section{A priori error estimates} \label{sec:estimates}
For the sake of error analysis, we require additional regularity:
In particular, for any $t>0$, we consider that the global displacement
is $\bu(t) \in {[H^{k+1}(\OmP\cup\OmE)]^d}$,
the fluid pressure $ p^{\rmP}(t)\in H^{k+1}(\OmP)$, and the  total and elastic pressures
$\psi^{\rmP}(t) \in H^{k}(\OmP),~\psi^{\rmE}(t) \in H^{k}(\OmE)$. Furthermore,
our subsequent analysis also requires the following regularity in time:
$\partial_{t} \bu \in \bL^2(0,T; [H^{k+1}(\OmP\cup\OmE)]^d)  $, $\partial_{t} p^{\rmP} \in L^2(0,T; H^{k+1}(\OmP))$, $\partial_{t} \psi^{\rmP} \in L^2(0,T; H^k(\OmP))$, $\partial_{t} \psi^{\rmE} \in L^2(0,T; H^k(\OmE))$, $\partial_{tt} \bu \in \bL^2(0,T; [L^2(\Omega)]^d)$ and $\partial_{tt} p^{\rmP}, \partial_{tt} \psi \in L^2(0,T;L^2(\Omega))$.

We start by recalling an
estimate for the interpolant $\bu_I \in \bV_h^k$ of $\bu$ and $p_I^{\rmP} \in Q_h^{k, \rmP}$ of $p^{\rmP}$ (see \cite{antonietti14,CGPS,CMS2016}).
\begin{lemma} \label{interpolant_u}
	There exist interpolants $\bu_I \in \bV_h^k$  and  $p_I^{\rmP} \in Q_h^{k, \rmP}$ of~$\bu$ and~$p^{\rmP}$, respectively,
	such that
	\begin{align*}
	\| \bu - \bu_I \|_0 + h | \bu - \bu_I|_1 &\lesssim h^{k+1}
 {\big( |\bu^{\rmP}|_{k+1,\OmP} + |\bu^{\rmE}|_{k+1,\OmE} \big),} \\
	\| p^{\rmP} - p_I^{\rmP} \|_{0, \OmP} + h | p^{\rmP} - p_I^{\rmP}|_{1, \OmP} &\lesssim h^{k+1} |p^{\rmP}|_{k+1, \OmP}.
	\end{align*}
\end{lemma}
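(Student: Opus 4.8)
The plan is to construct both interpolants element by element and then to combine a polynomial--approximation estimate with the stability of the local interpolation operator. First I would define, for each $K \in \cT_h$ (resp. $K \in \cT_h^\rmP$), the local interpolant $\bu_I|_K \in \bV_h^k(K)$ (resp. $p_I^\rmP|_K \in Q_h^{k,\rmP}(K)$) as the unique element of the local VE space sharing all degrees of freedom $(D_v1)$--$(D_v4)$ (resp. $(D_q1)$--$(D_q3)$) with $\bu$ (resp. $p^\rmP$); this is well defined by the unisolvence recalled above. Since $\cT_h$ resolves the interface $\Sigma$, every element lies entirely in either $\OmP$ or $\OmE$, so the piecewise regularity $\bu^\rmP \in [H^{k+1}(\OmP)]^d$, $\bu^\rmE \in [H^{k+1}(\OmE)]^d$ may be used on each $K$ separately, and the two subdomain seminorms on the right-hand side arise simply by summing the local bounds.

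The heart of the argument is a local estimate of the form $\|\bu - \bu_I\|_{0,K} + h_K|\bu - \bu_I|_{1,K} \lesssim h_K^{k+1}|\bu|_{k+1,K}$ (and analogously for $p^\rmP$). I would obtain it by the standard VEM device: let $\bu_\pi \in [\mathbb{P}_k(K)]^d$ be a polynomial for which the Bramble--Hilbert / Dupont--Scott estimate $\|\bu - \bu_\pi\|_{0,K} + h_K|\bu - \bu_\pi|_{1,K} \lesssim h_K^{k+1}|\bu|_{k+1,K}$ holds. Since $[\mathbb{P}_k(K)]^d \subset \bV_h^k(K)$ and the chosen degrees of freedom reproduce polynomials of degree up to $k$, the interpolation operator fixes $\bu_\pi$, whence $\bu - \bu_I = (\bu - \bu_\pi) - (\bu - \bu_\pi)_I$. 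The triangle inequality then reduces everything to the stability bound $\|(\bu - \bu_\pi)_I\|_{0,K} + h_K|(\bu - \bu_\pi)_I|_{1,K} \lesssim \|\bu - \bu_\pi\|_{0,K} + h_K|\bu - \bu_\pi|_{1,K}$, i.e. to the continuity (uniform in $h_K$) of the local interpolation operator on $H^1(K)$.

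The main obstacle is precisely this stability estimate, because several degrees of freedom are pointwise evaluations---the vertex values $(D_v1)$, $(D_q1)$ and the interior Gauss--Lobatto values $(D_v2)$, $(D_q2)$---and controlling point values by Sobolev norms is delicate on arbitrarily shaped polygons. Here I would pass to a reference configuration using the star-shapedness assumption $(A)$, bound each degree of freedom of $(\bu - \bu_\pi)_I$ by a suitably scaled norm of $\bu - \bu_\pi$ via trace and inverse inequalities, and then invoke the VEM stability/inverse inequalities for functions of the local space as in \cite{antonietti14,CGPS,CMS2016}. The shortest-edge condition $(\tilde{A})$ enters exactly at this point: it guarantees that the Gauss--Lobatto nodes stay well separated, so the pointwise functionals remain bounded with constants independent of $h_K$, which is what makes the argument robust. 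Finally, I would square, sum over all elements of $\cT_h$ (resp. $\cT_h^\rmP$), and take square roots, so that the displacement seminorm splits into its $\OmP$ and $\OmE$ contributions and the stated global bounds follow.
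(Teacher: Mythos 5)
The first thing to note is that the paper does not actually prove this lemma: it is recalled from \cite{antonietti14,CGPS,CMS2016}, so you are reconstructing the literature argument, and your overall architecture --- elementwise DOF interpolant, Dupont--Scott polynomial $\bu_\pi$, the inclusion $[\mathbb{P}_k(K)]^2 \subset \bV_h^k(K)$ so that unisolvence makes the interpolation operator fix $\bu_\pi$, triangle inequality, and summation over the interface-resolving mesh --- is indeed the standard route taken in those references. The polynomial inclusion in the two \emph{enhanced} local spaces deserves the one-line check you omitted, but it does hold: for $\mathbf{p}_k \in [\mathbb{P}_k(K)]^2$ write $-\bDelta \mathbf{p}_k = \nabla q_{k-1} + \bg_{k-2}^{\perp}$ and take $s = -q_{k-1}$ in the definition of $\bV_h^k(K)$, while for $Q_h^{k,\rmP}(K)$ the enhancement constraint is trivially satisfied because $\Pi_K^{\nabla,k}$ fixes $\mathbb{P}_k(K)$.

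The genuine gap is your reduction of the whole lemma to the claim that the local DOF interpolation operator is bounded on $H^1(K)$ uniformly in $h_K$. That operator is \emph{not} bounded on $H^1(K)$ for $d \ge 2$: the vertex and Gauss--Lobatto degrees of freedom are point evaluations, and point evaluation is an unbounded functional on $H^1$ because $H^1 \not\hookrightarrow C^0$ in dimensions two and three. Assumption $(\tilde{A})$ cannot rescue this step --- the failure is a Sobolev-embedding issue, not a node-clustering issue, so ``well separated Gauss--Lobatto nodes'' do not make pointwise functionals bounded on $H^1$. The standard repairs, both compatible with your framework, are: (i) prove stability with respect to a stronger norm, e.g. $\|w_I\|_{1,K} \lesssim \|w\|_{1,K} + h_K\,|w|_{2,K}$, which is legitimate since $H^2(K) \hookrightarrow C^0(\overline{K})$, and then invoke the full Dupont--Scott scale $|\bu - \bu_\pi|_{m,K} \lesssim h_K^{k+1-m}|\bu|_{k+1,K}$ for $0 \le m \le k+1$ (the extra $h_K$ weight exactly compensates, so the optimal rate survives); equivalently, bound the point-value DOFs of $w = \bu - \bu_\pi$ by $\|w\|_{L^\infty(K)} \lesssim h_K^{k+1-d/2}|\bu|_{k+1,K}$ and the moment DOFs by scaled $L^2$ norms, and convert the DOF vector into an $H^1$ bound on $w_I$ via the DOF/energy norm equivalence on $\bV_h^k(K)$ --- which is where assumptions $(A)$ and $(\tilde{A})$ genuinely enter; or (ii) follow \cite{CGPS,CMS2016} and apply the DOF interpolant not to $\bu$ itself but to a Cl\'ement quasi-interpolant of $\bu$ built on a shape-regular subtriangulation (available by star-shapedness), so that point values are only ever taken of piecewise polynomials. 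With either repair your argument closes in 2D; for the 3D spaces of Section 3.4 the same embedding care must additionally be applied face by face, since the DOFs there include values on faces and edges.
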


We now introduce the poroelastic projection operator: given
$(\bu,p^{\rmP},\psi)\in \bV\times Q^{\rmP} \times Z$, find $I^h(\bu,p^{\rmP},\psi) :=( I^h_{\bu} \bu, I^h_p p^{\rmP}, I^h_{\psi} \psi)$
$\in \bV_h^k \times Q_h^{k, \rmP} \times Z_h^k$ such that
\begin{subequations}
	\begin{alignat}{5}
	a_1^h(I_{\bu}^h \bu,\bv_h) &\;+&\; b_1(\bv_h, I_{\psi}^h \psi) & = &a_1(\bu,\bv_h)  &\;+&\; b_1(\bv_h, \psi) &\quad\text{for all $\bv_h \in \bV_h^k$,}  \label{weak-Iuh}\\
	& \;&\; b_1(I_{\bu}^h \bu, \phi_h)  &=& b_1(\bu, \phi_h)   & \;&\;  &\quad \text{for all $\phi_h \in Z_h^k$,}  \label{weak-Ipsih} \\
	& \;&\; a_2^h(I^h_p p^{\rmP},q_h^{\rmP})  &=& a_2(p^{\rmP},q_h^{\rmP})   & \;&\;  &\quad \text{for all $q_h^{\rmP} \in Q_h^{k,\rmP}$,}  \label{weak-Iph}
	\end{alignat}\end{subequations}
and we remark that $I^h$ is defined by the combination of the
saddle-point problem \eqref{weak-Iuh}, \eqref{weak-Ipsih} and the
elliptic problem \eqref{weak-Iph}; and hence, it is well-defined.
\begin{lemma}[Estimates for the poroelastic  projection]
	Let $(\bu, p^{\rmP}, \psi)$ and $( I^h_{\bu} \bu, I^h_p p^{\rmP}, I^h_{\psi} \psi)$ be the unique
	solutions of \eqref{weak-uh}--\eqref{weak-psih} and \eqref{weak-Iuh}, \eqref{weak-Ipsih},
	respectively. Then the following estimates hold:
	\begin{subequations}\begin{align}
		\| \bu - I^h_{\bu} \bu \|_1 + \| \psi - I^h_{\psi} \psi \|_0  &\lesssim h^k ({ |\bu^{\rmP}|_{k+1,\OmP} + |\bu^{\rmE}|_{k+1,\OmE}} + |\psi^{\rmP}|_{k, \OmP} +|\psi^{\rmE}|_{k, \OmE}), \label{estimate-Ihu-psi} \\
		\| p^{\rmP} - I^h_p p^{\rmP} \|_{0, \OmP} + h | p^{\rmP} - I^h_p p^{\rmP} |_{1, \OmP} &\lesssim h^{k+1} |p^{\rmP}|_{k+1, \OmP}  \label{estimate-Ihp}.
		\end{align}\end{subequations}
\end{lemma}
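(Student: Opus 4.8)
The plan is to read the statement as two essentially decoupled estimates. The pair $(I^h_{\bu}\bu, I^h_\psi\psi)$ is the solution of the discrete saddle-point (Stokes-type) problem \eqref{weak-Iuh}--\eqref{weak-Ipsih}, so \eqref{estimate-Ihu-psi} should come from Brezzi's theory, i.e. from the coercivity of $a_1^h$ on the discrete kernel of $b_1$ together with the discrete inf-sup condition \eqref{discr-infsup}, once the VE consistency mismatch between $a_1^h$ and $a_1$ is controlled. The quantity $I^h_p p^{\rmP}$ solves the discrete elliptic problem \eqref{weak-Iph}, so \eqref{estimate-Ihp} should follow from a C\'ea-type argument for the $H^1$-seminorm part and from an Aubin--Nitsche duality step for the $L^2$ part.

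For \eqref{estimate-Ihu-psi}, I would let $\bu_I\in\bV_h^k$ be the interpolant of Lemma \ref{interpolant_u}, let $\psi_\pi\in Z_h^k$ be the elementwise $L^2$-projection of $\psi$ onto $Z_h^k=\mathbb{P}_{k-1}$, and set $\boldsymbol\delta:=I^h_{\bu}\bu-\bu_I$ and $\chi:=I^h_\psi\psi-\psi_\pi$. Subtracting $a_1^h(\bu_I,\bv_h)+b_1(\bv_h,\psi_\pi)$ from \eqref{weak-Iuh} and $b_1(\bu_I,\phi_h)$ from \eqref{weak-Ipsih} yields an error system for $(\boldsymbol\delta,\chi)$ with right-hand sides $a_1(\bu,\bv_h)-a_1^h(\bu_I,\bv_h)+b_1(\bv_h,\psi-\psi_\pi)$ and $b_1(\bu-\bu_I,\phi_h)$. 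The only genuinely VEM-specific ingredient is that the two divergence functionals vanish: the choice of degrees of freedom $(D_v1)$, $(D_v2)$ and $(D_v4)$ forces the commuting relation $\vdiv\bu_I=\Pi^{0,k-1}(\vdiv\bu)$, where $\Pi^{0,k-1}$ is the $L^2$-projection onto $\mathbb{P}_{k-1}$, whence $b_1(\bu-\bu_I,\phi_h)=-\int_\Omega(\vdiv\bu-\Pi^{0,k-1}\vdiv\bu)\,\phi_h=0$ since $\phi_h\in\mathbb{P}_{k-1}$; and $b_1(\bv_h,\psi-\psi_\pi)=0$ because $\vdiv\bv_h\in\mathbb{P}_{k-1}(K)$ by construction of $\bV_h^k$ while $\psi-\psi_\pi\perp\mathbb{P}_{k-1}$.

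Consequently $\boldsymbol\delta$ lies in the discrete kernel of $b_1$. For the surviving functional I would insert a piecewise polynomial best-approximation $\bu_\pi$ and use the polynomial consistency $a_1^h(\bu_\pi,\bv_h)|_K=a_1(\bu_\pi,\bv_h)|_K$ to write $a_1(\bu,\bv_h)-a_1^h(\bu_I,\bv_h)=a_1(\bu-\bu_\pi,\bv_h)+a_1^h(\bu_\pi-\bu_I,\bv_h)$, bounding both pieces by $h^k(|\bu^\rmP|_{k+1,\OmP}+|\bu^\rmE|_{k+1,\OmE})\|\bv_h\|_1$ via the boundedness in \eqref{bound:stab} and Lemma \ref{interpolant_u}. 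Testing with $\bv_h=\boldsymbol\delta$ (so that $b_1(\boldsymbol\delta,\chi)=0$) and using coercivity of $a_1^h$ with Korn's inequality gives $\|\boldsymbol\delta\|_1\lesssim h^k(\cdots)$; then \eqref{discr-infsup} applied to $\chi$, using $b_1(\bv_h,\chi)=\mathrm{RHS}(\bv_h)-a_1^h(\boldsymbol\delta,\bv_h)$, gives $\|\chi\|_0\lesssim h^k(\cdots)$. Triangle inequalities with $\|\bu-\bu_I\|_1\lesssim h^k(\cdots)$ and $\|\psi-\psi_\pi\|_0\lesssim h^k(|\psi^\rmP|_{k,\OmP}+|\psi^\rmE|_{k,\OmE})$ close \eqref{estimate-Ihu-psi}. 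The same template, now with $\delta_p:=I^h_p p^{\rmP}-p_I^{\rmP}$, the polynomial consistency of $a_2^h$ and its seminorm coercivity $a_2^h(q_h,q_h)\gtrsim\|\nabla q_h\|_{0,\OmP}^2$, delivers $|\delta_p|_{1,\OmP}\lesssim h^k|p^{\rmP}|_{k+1,\OmP}$, which together with Lemma \ref{interpolant_u} and Poincar\'e's inequality (valid since $Q^{\rmP}$ carries a Dirichlet part) controls the $h\,|p^{\rmP}-I^h_p p^{\rmP}|_{1,\OmP}$ term.

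The remaining $L^2$ bound $\|p^{\rmP}-I^h_p p^{\rmP}\|_{0,\OmP}\lesssim h^{k+1}|p^{\rmP}|_{k+1,\OmP}$ requires an Aubin--Nitsche step, which I expect to be the main obstacle. I would introduce the adjoint problem $a_2(v,w)=(p^{\rmP}-I^h_p p^{\rmP},v)_{0,\OmP}$, assume $H^2$-elliptic regularity $\|w\|_{2,\OmP}\lesssim\|p^{\rmP}-I^h_p p^{\rmP}\|_{0,\OmP}$, test with $v=p^{\rmP}-I^h_p p^{\rmP}$, and subtract the interpolant $w_I$ while exploiting the orthogonality built into \eqref{weak-Iph} to gain one extra power of $h$. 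The delicate point, absent in the conforming setting, is that the duality must absorb the nonconsistent part $a_2-a_2^h$: this forces a careful splitting of consistency terms, estimated on polynomials through the projection bounds in \eqref{bound:stab} (and, if $\kappa$ is not elementwise constant, its variation must be folded into the consistency error). Once this is handled the extra factor $h$ emerges from $\|w-w_I\|_{1,\OmP}\lesssim h\|w\|_{2,\OmP}$, completing \eqref{estimate-Ihp}.
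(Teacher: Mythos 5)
Your proposal is correct and follows essentially the same route as the paper, whose entire proof consists of citing the Stokes-VEM estimates of \cite{antonietti14} and the elliptic-VEM estimates of \cite{daveiga-g13}: your discrete-kernel/inf-sup argument with polynomial consistency for $(I^h_{\bu}\bu, I^h_{\psi}\psi)$, exploiting $\vdiv\bV_h^k\subset\mathbb{P}_{k-1}$ and the divergence-preserving interpolant, and your C\'ea-plus-Aubin--Nitsche argument for $I^h_p p^{\rmP}$ are precisely the standard proofs underlying those references. The caveats you flag (elliptic regularity for the duality step, the oscillation of $\kappa$ entering the consistency error) are exactly the hypotheses under which the cited results are established, so nothing is missing relative to the paper's argument.
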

\begin{proof}
It follows from estimates available for discretisations of Stokes \cite{antonietti14} and elliptic problems \cite{daveiga-g13}.
\end{proof}

Note that the arguments here are readily extendible to
 derive error estimates of order $h^s$. It suffices to assume that
	$\bu(t) \in {[H^{1+s}(\OmP\cup\OmE)]^d}$,  $ p^{\rmP}(t)\in H^{1+s}(\OmP)$, $\psi^{\rmP}(t) \in H^{s}(\OmP)$, and $\psi^{\rmE}(t) \in H^{s}(\OmE)$, for $0 < s \le k$.


\begin{theorem}[Semi-discrete energy error estimates]\label{th:semid}
	Let the triplets $(\bu(t),p^{\rmP}(t),\psi(t)) \in \bV \times Q^{\rmP} \times Z$
	and $(\bu_h(t),p_h^{\rmP}(t),\psi_h(t)) \in \bV_h^k \times Q_h^{k,\rmP} \times Z_h^k$ be the unique
	solutions to problems \eqref{weak-u}--\eqref{weak-psi} and \eqref{weak-uh}--\eqref{weak-psih},
	respectively. Then, the following bound holds, with constant $C>0$ independent of~$h$, $c_0$ and $\lambda$,
	\begin{align*}
	{\mu_{\min}} \| \beps((\bu- \bu_h)(t)) \|_0^2  + \|(\psi - \psi_h)(t) \|_0^2  
	+ \frac{\kappa_{\min}}{\eta} \int_0^t \| \nabla (p^{\rmP} - p_h^{\rmP})(s) \|_{0, \OmP}^2 \, \mathrm{d}s \,\le \, C\, h^{2k}.
	\end{align*}
\end{theorem}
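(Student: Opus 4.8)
The plan is to split each error through the poroelastic projection $I^h$ of \eqref{weak-Iuh}--\eqref{weak-Iph}, writing $\bu-\bu_h=\boldsymbol{\rho}_{\bu}+\boldsymbol{e}_{\bu}$, $p^{\rmP}-p_h^{\rmP}=\rho_p+e_p$ and $\psi-\psi_h=\rho_\psi+e_\psi$, with projection errors $\boldsymbol{\rho}_{\bu}=\bu-I_{\bu}^h\bu$, $\rho_p=p^{\rmP}-I_p^hp^{\rmP}$, $\rho_\psi=\psi-I_\psi^h\psi$ and discrete errors $\boldsymbol{e}_{\bu}=I_{\bu}^h\bu-\bu_h$, $e_p=I_p^hp^{\rmP}-p_h^{\rmP}$, $e_\psi=I_\psi^h\psi-\psi_h$. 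The projection parts are already controlled in the correct norms by \eqref{estimate-Ihu-psi}--\eqref{estimate-Ihp} and Lemma \ref{interpolant_u}, so by the triangle inequality it suffices to bound the discrete errors. Subtracting \eqref{weak-uh}--\eqref{weak-psih} from \eqref{weak-u}--\eqref{weak-psi} tested against discrete functions, and exploiting the defining identities of $I^h$ (which render $a_1$, $a_2$ and the $b_1$-constraint consistent), produces the error system $a_1^h(\boldsymbol{e}_{\bu},\bv_h)+b_1(\bv_h,e_\psi)=\mathcal{R}_1(\bv_h)$, $\tilde a_2^h(\partial_t e_p,q_h^{\rmP})+a_2^h(e_p,q_h^{\rmP})-b_2(q_h^{\rmP},\partial_t e_\psi)=\mathcal{R}_2(q_h^{\rmP})$ and $b_1(\boldsymbol{e}_{\bu},\phi_h)+b_2(e_p,\phi_h)-a_3(e_\psi,\phi_h)=\mathcal{R}_3(\phi_h)$. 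Here $\mathcal{R}_1=F-F^h$ is the load-projection defect, $\mathcal{R}_3=-b_2(\rho_p,\cdot)+a_3(\rho_\psi,\cdot)$, and $\mathcal{R}_2$ collects $G-G^h$ together with the mass-form inconsistency $\tilde a_2(\partial_t p^{\rmP},\cdot)-\tilde a_2^h(\partial_t I_p^hp^{\rmP},\cdot)$; each is $O(h^k)$ (indeed $O(h^{k+1})$ for the $L^2$-projected loads, after using orthogonality against constants).

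For the energy estimate I would test the first error equation with $\bv_h=\partial_t\boldsymbol{e}_{\bu}$, the second with $q_h^{\rmP}=e_p$, and the time-differentiated third equation with $\phi_h=e_\psi$. Forming (first)$+$(second)$-$(differentiated third) cancels the $b_1(\partial_t\boldsymbol{e}_{\bu},e_\psi)$ contributions exactly, while the two surviving $b_2$ terms combine into the total derivative $\partial_t b_2(e_p,e_\psi)$, and $a_1^h(\boldsymbol{e}_{\bu},\partial_t\boldsymbol{e}_{\bu})$, $\tilde a_2^h(\partial_t e_p,e_p)$, $a_3(\partial_t e_\psi,e_\psi)$ are each half of a time derivative. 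Integrating over $(0,t)$ produces, on the left,
\begin{align*}
\tfrac12 a_1^h(\boldsymbol{e}_{\bu},\boldsymbol{e}_{\bu})(t)+\tfrac12\tilde a_2^h(e_p,e_p)(t)+\tfrac12 a_3(e_\psi,e_\psi)(t)+\int_0^t a_2^h(e_p,e_p)\,\mathrm{d}s,
\end{align*}
and, on the right, the data at $t=0$, the boundary term $-b_2(e_p,e_\psi)\big|_0^t$, and the time integrals of the residuals. Choosing the discrete initial data so that $\boldsymbol{e}_{\bu}(0)$, $e_p(0)$, $e_\psi(0)$ vanish (or are $O(h^k)$), the boundary term $b_2(e_p,e_\psi)(t)$ carries the factor $\alpha/\lambda^{\rmP}$ and is absorbed by Young's inequality; the residual containing $\partial_t\boldsymbol{e}_{\bu}$ is treated by a further integration by parts in time, reducing it to $\partial_t\mathcal{R}_1$ acting on $\boldsymbol{e}_{\bu}$, which is exactly why the stated time-regularity of $\bb$ (and of $p^{\rmP},\psi$) is required.

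The crucial point is that $a_3(e_\psi,e_\psi)=\tfrac1\lambda\|e_\psi\|_0^2$ degenerates as $\lambda\to\infty$ and therefore cannot, by itself, deliver the $\lambda$-robust control of $\|\psi-\psi_h\|_0$ demanded by the theorem. I would recover $\|e_\psi\|_0$ from the first error equation, which gives $b_1(\bv_h,e_\psi)=\mathcal{R}_1(\bv_h)-a_1^h(\boldsymbol{e}_{\bu},\bv_h)$; the discrete inf-sup condition \eqref{discr-infsup} then yields $\|e_\psi\|_0\lesssim\|\mathcal{R}_1\|_{(\bV_h^k)'}+\mu_{\max}\|\beps(\boldsymbol{e}_{\bu})\|_0$, i.e.\ the total-pressure error is bounded, independently of $\lambda$ and $c_0$, by the displacement error already estimated plus an $O(h^k)$ consistency term. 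Combining the energy identity with this inf-sup bound, estimating every residual by Cauchy--Schwarz and Young (absorbing the terms quadratic in the controlled quantities into the left-hand side), the remaining contributions are arranged in the form required by Lemma \ref{lem:semi-Xbound}, whose use avoids a Gronwall factor and keeps the constant independent of $t_{\mathrm{final}}$, $c_0$ and $\lambda$. A final triangle inequality with \eqref{estimate-Ihu-psi}--\eqref{estimate-Ihp} then converts the discrete-error bound into the claimed $O(h^{2k})$ estimate.

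I expect the main obstacle to be the simultaneous, parameter-robust control of all three quantities: one must prevent the mass-form inconsistency $\tilde a_2-\tilde a_2^h$ and the cross term $b_2(e_p,e_\psi)$ from spoiling robustness in $\lambda$ and $c_0$, recover $\|e_\psi\|_0$ purely through the inf-sup of $b_1$, and route the entire estimate through Lemma \ref{lem:semi-Xbound} rather than Gronwall so that the constant stays independent of the final time. The careful handling of the time-differentiated constraint equation and the accompanying integrations by parts in time (which dictate the regularity hypotheses on $\partial_t\bb$, $\partial_t p^{\rmP}$ and $\partial_t\psi$) is the other delicate ingredient.
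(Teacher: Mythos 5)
Your proposal reproduces the paper's proof strategy essentially step for step: the splitting through the poroelastic projector $I^h$, the test functions $\bv_h=\partial_t \boldsymbol{e}_{\bu}$, $q_h^{\rmP}=e_p$, the time-differentiated constraint equation tested with $\pm e_\psi$, the integration by parts in time for the load-consistency residual (which is exactly how the paper treats its term $D_1$ and why $\partial_t\bb$ regularity is assumed), the recovery of $\|e_\psi\|_0$ via the discrete inf-sup condition \eqref{discr-infsup} rather than from the degenerate $a_3$ term, the use of Lemma \ref{lem:semi-Xbound} in place of Gronwall, the choice of initial data $\bu_h(0)=\bu_I(0)$, $p_h^{\rmP}(0)=p_I^{\rmP}(0)$, $\psi_h(0)=\Pi^{0,k-1}\psi(0)$, and the final triangle inequality. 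All of this matches Appendix \ref{proof-semi}.

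There is, however, one localized but genuine gap: your treatment of the cross term $b_2(e_p,e_\psi)\big|_0^t$ by ``Young's inequality with the factor $\alpha/\lambda^{\rmP}$'' does not deliver the claimed robustness in $c_0$ and $\lambda$. After integrating in time, the energy controls $\tfrac12\tilde a_2^h(e_p,e_p)(t)\ge \tfrac12\min\{1,\tilde\zeta_*\}\bigl(c_0+\tfrac{\alpha^2}{\lambda^{\rmP}}\bigr)\|e_p(t)\|_{0,\OmP}^2$ and $\tfrac12 a_3(e_\psi,e_\psi)(t)$, and the quadratic form
\begin{equation*}
\frac{\min\{1,\tilde\zeta_*\}}{2}\,\frac{\alpha^2}{\lambda^{\rmP}}\|e_p\|_{0,\OmP}^2
\;-\;\frac{\alpha}{\lambda^{\rmP}}\,(e_p,e_\psi)_{0,\OmP}
\;+\;\frac{1}{2\lambda^{\rmP}}\|e_\psi\|_{0,\OmP}^2
\end{equation*}
is \emph{indefinite} whenever the stabilisation constant satisfies $\tilde\zeta_*<1$ and $c_0\to 0$: no choice of Young weights absorbs it, since absorbing the $e_p$ part forces a weight that overshoots the available $e_\psi$ energy. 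The paper's resolution is algebraic rather than by absorption: because $e_\psi$ is a piecewise polynomial of degree $k-1$, one has $b_2(e_p,e_\psi)=\frac{\alpha}{\lambda^{\rmP}}\sum_K(\Pi^{0,k}_K e_p,\,e_\psi)_{0,K}$, and the \emph{consistency} part of $\tilde a_2^h$ carries $\|\Pi^{0,k}_K e_p\|_{0,K}^2$ with the exact coefficient $c_0+\alpha^2/\lambda^{\rmP}$ (constant one, not $\tilde\zeta_*$). This permits an exact completion of squares, producing the nonnegative terms $\frac{\alpha^2}{\lambda^{\rmP}}\|(I-\Pi^{0,k}_K)e_p\|_{0,K}^2$ and $\frac{1}{\lambda^{\rmP}}\|\alpha\Pi^{0,k}_K e_p-e_\psi\|_{0,K}^2$ that appear on the left-hand side of the paper's estimate and can simply be retained or dropped, with no absorption needed. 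You correctly identified this cross term as the delicate point, but the mechanism you propose would fail precisely in the singular regime the theorem is designed to cover; the projection structure of $b_2$ and $\tilde a_2^h$ is the missing ingredient.
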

 The details of the proof are given in Appendix \ref{proof-semi}.

\begin{theorem}[Fully-discrete error estimates]\label{thm:fullyd}
	Let ($\bu(t),p^{\rmP}(t),\psi(t)$) $\in \bV \times Q^{\rmP} \times Z$
	and ($\bu_h^n,p_h^{n,\rmP},\psi_h^n$) $\in \bV_h^k \times Q_h^{k,\rmP} \times Z_h^k$ be the unique
	solutions to problems \eqref{weak-u}-\eqref{weak-psi} and \eqref{weak-uhn}-\eqref{weak-psihn},
	respectively. Then the following estimate holds for any $n=1,\ldots,N$, with constants~$C$ independent of~$h,\, \Delta t ,\, \lambda$  {and $c_0$}:
	\begin{align} \label{th4.3-est}
	{\mu_{\min}} \| \beps(\bu(t_n) - \bu_h^n) \|_0^2 \, + \, \| \psi(t_n) - \psi_h^n \|_0^2  
	+ (\Delta t) \frac{\kappa_{\min}}{\eta}\| \nabla (p^{\rmP}(t_n) - p_h^{n,\rmP})\|_{0, \OmP}^2   \, \le C\, (h^{2 k} + \Delta t^2).
	\end{align}
\end{theorem}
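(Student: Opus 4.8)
The plan is to mirror the semi-discrete analysis of Theorem~\ref{th:semid}, replacing the time derivative by the backward difference quotient $\bar\partial f^n := (f^n-f^{n-1})/\Delta t$ and replacing the continuous Lemma~\ref{lem:semi-Xbound} by its discrete counterpart Lemma~\ref{lem:Xn-bound}. First I would split each error through the poroelastic projection $I^h$ of Section~\ref{sec:estimates}, writing
\[
\bu(t_n)-\bu_h^n=\boldsymbol\rho_{\bu}^n+\boldsymbol\theta_{\bu}^n,\qquad p^{\rmP}(t_n)-p_h^{n,\rmP}=\rho_p^n+\theta_p^n,\qquad \psi(t_n)-\psi_h^n=\rho_\psi^n+\theta_\psi^n,
\]
with the projection parts $\boldsymbol\rho_{\bu}^n=\bu(t_n)-I^h_{\bu}\bu(t_n)$, $\rho_p^n=p^{\rmP}(t_n)-I^h_p p^{\rmP}(t_n)$, $\rho_\psi^n=\psi(t_n)-I^h_\psi\psi(t_n)$ bounded by $\mathcal{O}(h^k)$ via \eqref{estimate-Ihu-psi}--\eqref{estimate-Ihp} and Lemma~\ref{interpolant_u}, and the fully discrete parts $\boldsymbol\theta_{\bu}^n,\theta_p^n,\theta_\psi^n$ lying in the VE spaces. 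Since the continuous initial data vanish, the discrete initial errors contribute only $\mathcal{O}(h^k)$, so it remains to bound the discrete parts at time level $n$.

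Next I would derive the error equations for the discrete parts by subtracting \eqref{weak-uhn}--\eqref{weak-psihn} from \eqref{weak-u}--\eqref{weak-psi} evaluated at $t=t_n$ and using the defining identities \eqref{weak-Iuh}--\eqref{weak-Iph} of $I^h$, which cancel the leading bilinear forms and the term $b_1(\boldsymbol\rho_{\bu}^n,\cdot)$, leaving
\begin{align*}
a_1^h(\boldsymbol\theta_{\bu}^n,\bv_h)+b_1(\bv_h,\theta_\psi^n)&=\Lambda_1^n(\bv_h),\\
\tilde a_2^h(\bar\partial\theta_p^n,q_h^{\rmP})+a_2^h(\theta_p^n,q_h^{\rmP})-b_2(q_h^{\rmP},\bar\partial\theta_\psi^n)&=\Lambda_2^n(q_h^{\rmP}),\\
b_1(\boldsymbol\theta_{\bu}^n,\phi_h)+b_2(\theta_p^n,\phi_h)-a_3(\theta_\psi^n,\phi_h)&=\Lambda_3^n(\phi_h).
\end{align*}
Here $\Lambda_1^n$ gathers the load-projection consistency error between $F$ and $F^{h,n}$; $\Lambda_3^n$ gathers $b_2(\rho_p^n,\cdot)$ and $a_3(\rho_\psi^n,\cdot)$; and $\Lambda_2^n$ collects the most delicate contributions, namely the VEM polynomial-consistency discrepancies $(\tilde a_2^h-\tilde a_2)$ acting on the projected solution (controlled through polynomial consistency and interpolation), the time differences $\bar\partial\rho_p^n$, $\bar\partial\rho_\psi^n$ of projection errors (controlled through the regularity in time of $\partial_t\rho$), and the backward-Euler truncation errors $\bar\partial p^{\rmP}(t_n)-\partial_t p^{\rmP}(t_n)$ and $\bar\partial\psi(t_n)-\partial_t\psi(t_n)$, which are $\mathcal{O}(\Delta t)$ with integral remainders involving $\partial_{tt}p^{\rmP}$ and $\partial_{tt}\psi$.

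I would then test the first equation with $\bv_h=\bar\partial\boldsymbol\theta_{\bu}^n$, take the backward difference of the third equation and test it with $\phi_h=\theta_\psi^n$, and subtract the two so as to eliminate the coupling term $b_1(\bar\partial\boldsymbol\theta_{\bu}^n,\theta_\psi^n)$; the second equation is tested with $q_h^{\rmP}=\theta_p^n$ and added. The symmetric form $b_2$ then produces the telescopable term $\bar\partial b_2(\theta_p^n,\theta_\psi^n)$ up to a remainder $\Delta t\,b_2(\bar\partial\theta_p^n,\bar\partial\theta_\psi^n)$ stemming from the non-symmetry of the discrete product rule, while the elementary inequality $2a(x^n,x^n-x^{n-1})\ge a(x^n,x^n)-a(x^{n-1},x^{n-1})$ applied to the symmetric positive forms $a_1^h$, $a_3$, $\tilde a_2^h$ turns the diagonal terms into telescoping differences and $a_2^h(\theta_p^n,\theta_p^n)$ supplies the dissipative $\|\nabla\theta_p^n\|_{0,\OmP}^2$. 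The telescoped cross term is absorbed into the combined energy through Young's inequality exactly as at the continuous level, $\frac{\alpha}{\lambda}\bigl|\int_{\OmP}\theta_p^n\theta_\psi^n\bigr|\le\frac{\alpha^2}{2\lambda}\|\theta_p^n\|_{0,\OmP}^2+\frac{1}{2\lambda}\|\theta_\psi^n\|_0^2$, which is controlled by half of the $\tilde a_2^h$- and $a_3$-energies: this is precisely where the $c_0$- and $\lambda$-robustness enters. Multiplying by $\Delta t$, summing over $j=1,\dots,n$ and invoking \eqref{bound:stab} yields a discrete inequality of the form required by Lemma~\ref{lem:Xn-bound}, with $X_n^2$ the combined energy, $D_n=\mathcal{O}(h^{2k}+\Delta t^2)$ produced by the consistency terms, and $C_1,G_j$ from the data. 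To recover the $\lambda$-robust control of $\|\theta_\psi^n\|_0$ (the $a_3$-energy being $\lambda$-weighted), I would reuse the first error equation together with the discrete inf-sup condition \eqref{discr-infsup}, giving $\|\theta_\psi^n\|_0\lesssim\|\beps(\boldsymbol\theta_{\bu}^n)\|_0+\|\Lambda_1^n\|$, and then close with the triangle inequality and the projection estimates. The main obstacle will be the parameter-robust bookkeeping of $\Lambda_2^n$ and of the discrete-product-rule remainder $\Delta t\,b_2(\bar\partial\theta_p^n,\bar\partial\theta_\psi^n)$: one must ensure the constants multiplying $c_0$, $\lambda$ and $\kappa$ stay uniform, that the backward-Euler remainders are genuinely $\mathcal{O}(\Delta t^2)$ after the $\Delta t\sum_j$ summation, and that the remainder is either absorbed into the telescoped energy or subsumed into the $G_jX_j$ terms handled by Lemma~\ref{lem:Xn-bound}.
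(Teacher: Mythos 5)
Your plan reproduces the paper's Appendix~\ref{proof-full} proof essentially step for step: the same splitting of the errors through the poroelastic projector $I^h$, the same test functions (displacement increment in the first error equation, the backward-differenced third equation tested with the total-pressure error, the pressure equation tested with the discrete pressure error), the same telescoping inequality $(f_h^n-f_h^{n-1},f_h^n)\ge\frac12(\|f_h^n\|_0^2-\|f_h^{n-1}\|_0^2)$, summation by parts on the load term, Taylor remainders for the $\mathcal{O}(\Delta t)$ truncation errors, the discrete inf-sup bound \eqref{bound:discrete-inf-sup} to recover $\|\theta_\psi^n\|_0$, Lemma~\ref{lem:Xn-bound} to close, and the same choice of discrete initial data. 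One of your worries is moot: the remainder $\Delta t\, b_2(\bar\partial\theta_p^n,\bar\partial\theta_\psi^n)$ from the discrete product rule, once combined with the analogous remainders generated by $\tilde{a}_2^h$ and $a_3$, is the same quadratic form evaluated at the increments and is nonnegative, so it can simply be dropped rather than absorbed.

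There is, however, one step that as written does not deliver the claimed $c_0$-robustness. You absorb the cross term via Young's inequality, $\frac{\alpha}{\lambda^{\rmP}}\bigl|\int_{\OmP}\theta_p^n\theta_\psi^n\bigr|\le\frac{\alpha^2}{2\lambda^{\rmP}}\|\theta_p^n\|_{0,\OmP}^2+\frac{1}{2\lambda^{\rmP}}\|\theta_\psi^n\|_{0,\OmP}^2$, claiming the first piece is controlled by half of the $\tilde{a}_2^h$-energy. But the discrete coercivity only guarantees $\tilde{a}_2^h(q_h,q_h)\ge\min\{1,\tilde{\zeta}_*\}\,(c_0+\alpha^2/\lambda^{\rmP})\|q_h\|_{0,\OmP}^2$, and when $\tilde{\zeta}_*<1$ and $c_0\to 0$ the coefficient left after subtracting $\frac{\alpha^2}{2\lambda^{\rmP}}$ (twice, since two $b_2$ terms telescope into the energy) can become negative; the constant in \eqref{th4.3-est} would then depend on $c_0$. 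The paper circumvents this with an exact algebraic recombination rather than Young's inequality: since $b_2(p_h,\phi_h)=\frac{\alpha}{\lambda^{\rmP}}\sum_{K\in\cT_h^{\rmP}}(\Pi^{0,k}_K p_h,\phi_h)_{0,K}$, the cross term only sees the projected part of $p_h$, on which the consistency part of $\tilde{a}_2^h$ is exact (no stabilisation constant intervenes), so $\tilde{a}_2^h-2b_2+a_3$ completes exactly into
\begin{align*}
c_0\|E_p^{A,n}\|_{0,\OmP}^2+\frac{1}{\lambda^{\rmP}}\sum_{K\in\cT_h^{\rmP}}\Bigl(\alpha^2\|(I-\Pi^{0,k}_K)E_p^{A,n}\|_{0,K}^2+\|\alpha\Pi^{0,k}_K E_p^{A,n}-E_\psi^{A,n}\|_{0,K}^2\Bigr)+\frac{1}{\lambda^{\rmE}}\|E_\psi^{A,n}\|_{0,\OmE}^2,
\end{align*}
a sum of squares with parameter-uniform constants. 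Your argument is repaired by a single modification in this spirit: perform the Young step with $\Pi^{0,k}_K\theta_p^n$ in place of $\theta_p^n$, pairing it against the exactly computable consistency part $\tilde{a}_2^K(\Pi^{0,k}_K\cdot,\Pi^{0,k}_K\cdot)$ while the stabilisation $S_0^K$ separately controls $(I-\Pi^{0,k}_K)\theta_p^n$; the rest of your bookkeeping then goes through as planned.
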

The proof of the above theorem is postponed to Appendix \ref{proof-full}.

	\section{Computational results} \label{sec:numer}
In this section we collect numerical tests that illustrate the convergence of the proposed VEMs. The implementation is based on an in-house MATLAB library. Errors between exact solutions (evaluated at integration points) and projection of VE solutions will be measured in the computable proxies for the absolute $L^2(\Omega)$--norm and $H^1(\Omega)$--semi-norm, which, for an approximation space of order $k$, are defined as follows 
%
	\begin{align*}
	E_0^k(v) := \| v - \Pi^{0,k}_K v_h\|_{0,\Omega} \qand E_1^k(v) := \| \nabla v - \nabla \Pi^{\nabla,k}_K v_h\|_{0,\Omega},
	\end{align*}
	respectively, 
	where the generic field $v$ can be global displacement $\bu$,
	fluid pressure $p^{\rmP}$, or the global pressure $\psi$ (and with the subscript $h$ we denote the approximate field) and $v_h$ denotes the discrete solution by the $k$-order VE scheme.  
	The experimental decay  rates associated with the errors $E_j^k(v) $ and $\widetilde{E}_j^k(v) $ (using approximation spaces of order $k$) generated by the VEM on meshes with maximal size 
 $h$ and $\widetilde{h}$, respectively, are denoted as 
	\[ r_j^k(v) = \frac{\log(E_j^k(v) /\widetilde{E}_j^k(v) )}{\log(h/\widetilde{h})} \qquad j = 0,1.\]
In the case of stationary solutions, according to Theorem \ref{thm:fullyd} we expect that these errors decay with $\mathcal{O}(h^{k})$. 
	
	\begin{figure}[t!]
		\centering
		\includegraphics[width=0.25\linewidth]{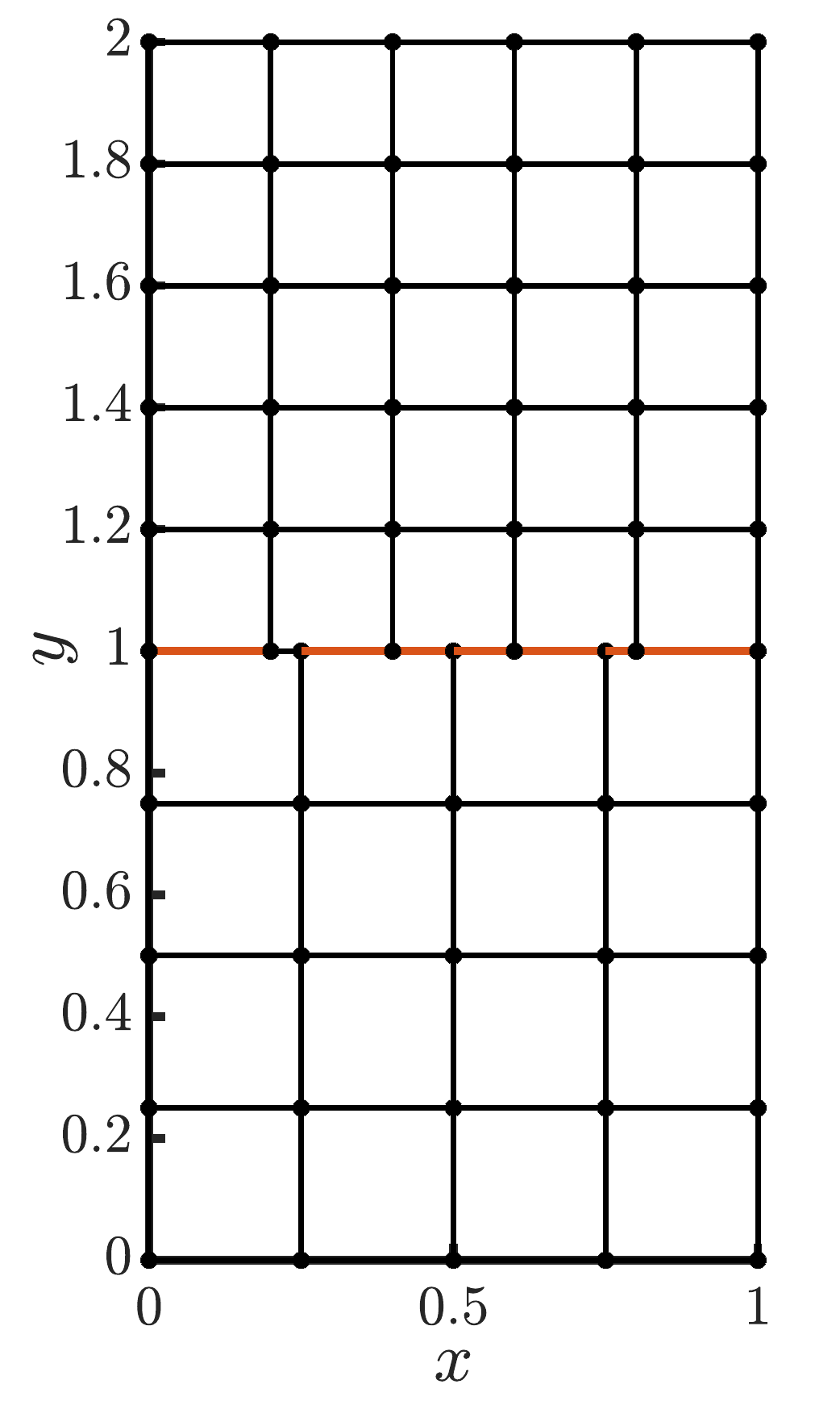}
		\caption{Square mesh with smaller edges at interface on domain $\Omega_1$.}
		\label{fig:sq_mesh}
	\end{figure}	
	
We will use polygonal meshes of different type, which are allowed to possess small edges. The computational results will include 
 tests produced using either the classical \emph{dofi-dofi}
stabilisation (defined in Section~\ref{sec:VEapprox}) as well as the tangential edge stabilisation
proposed in \cite{daveiga17} for elliptic problems, and here adapted as
	 	\begin{equation*}
	 	S^K_{\partial} (\bu, \bv) := 
	 	\sum_{e\in \partial K}  h_{K} \int_{ \partial K} [ \bnabla \bu\, \bt_K^e ] \cdot  [ \bnabla \bv\, \bt_K^e ] .
	 	\end{equation*}

\begin{figure}[t!]
	\begin{center}
	\subfigure[]{\includegraphics[width=0.325\textwidth]{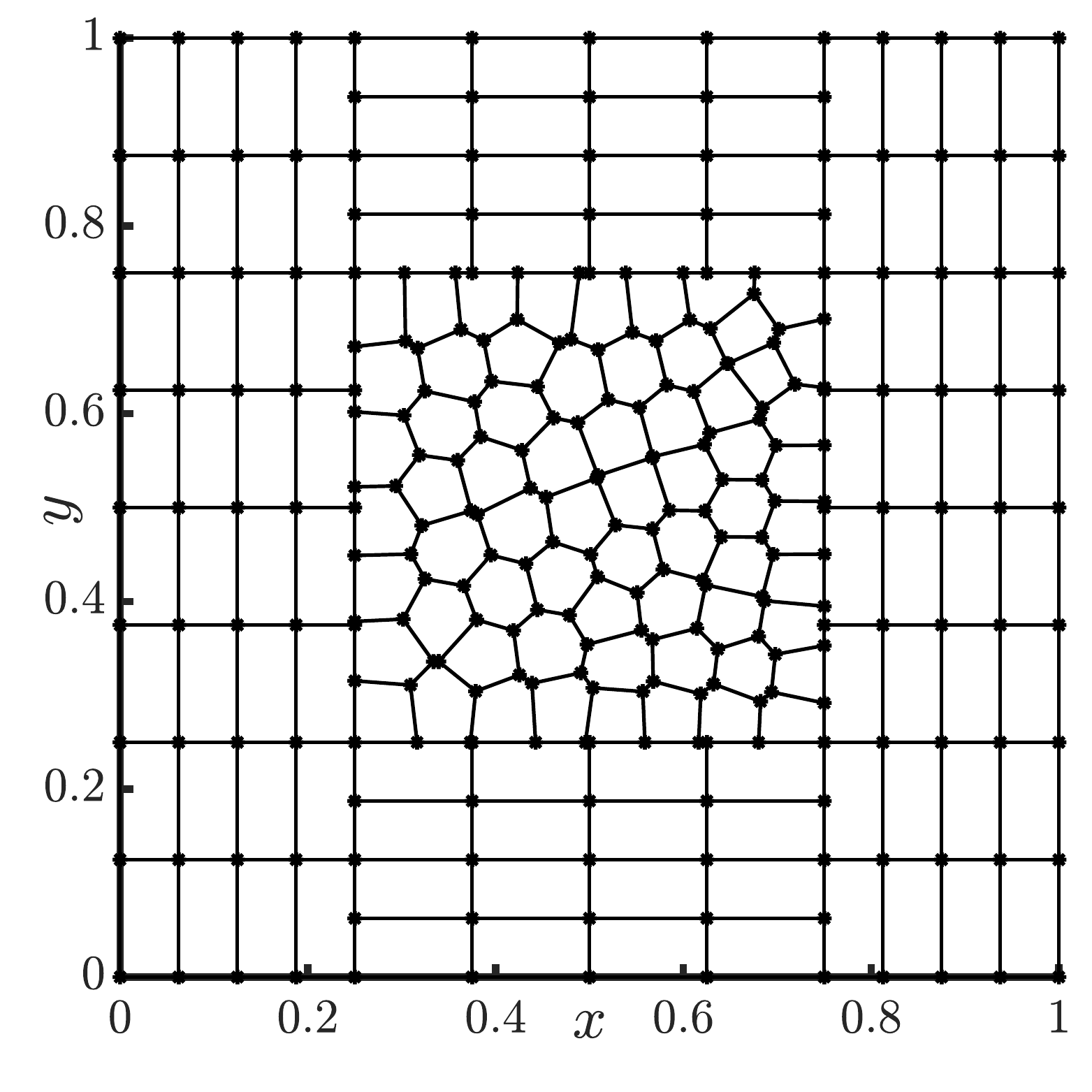}}
	\subfigure[]{\includegraphics[width=0.325\textwidth]{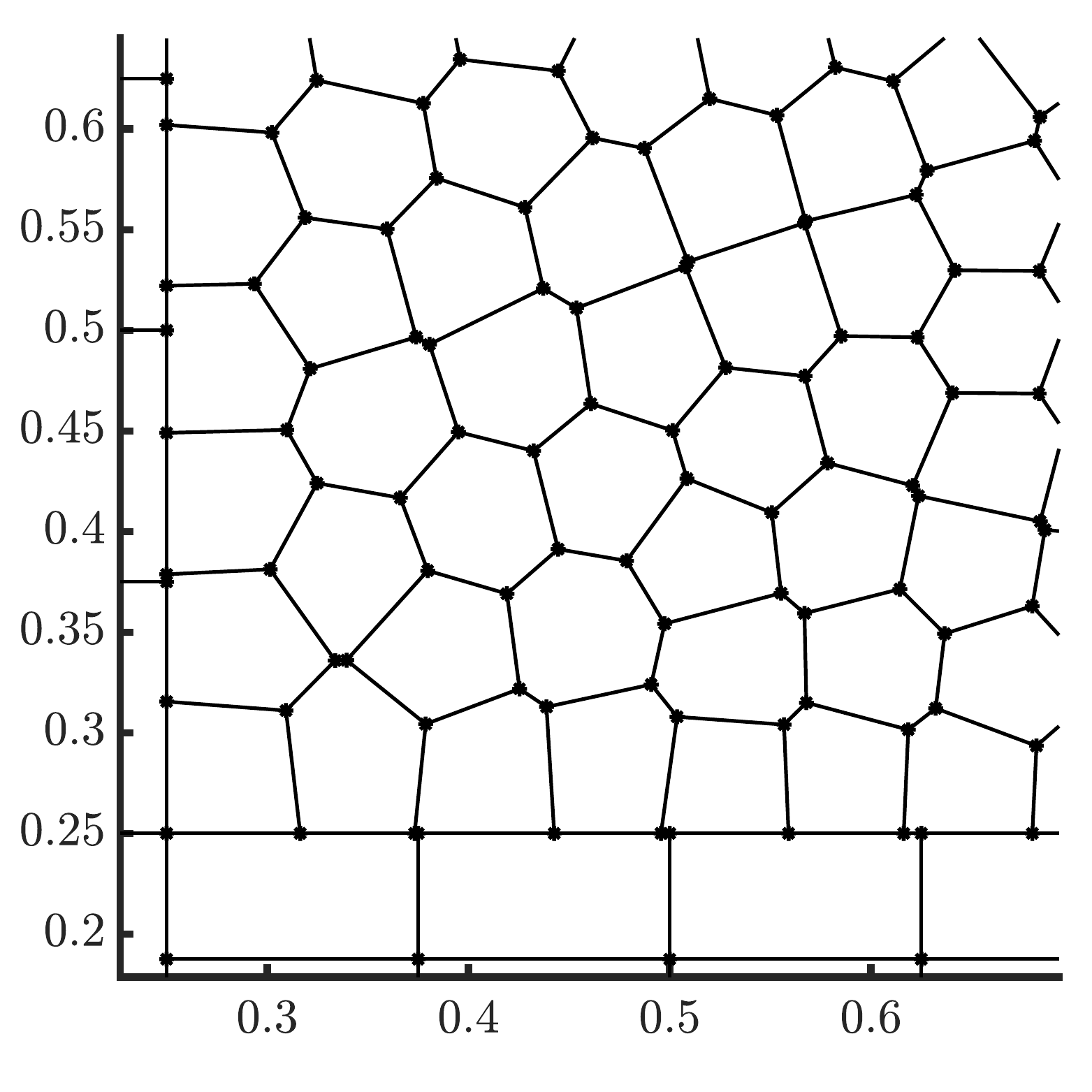}}
	\subfigure[]{\includegraphics[width=0.325\textwidth]{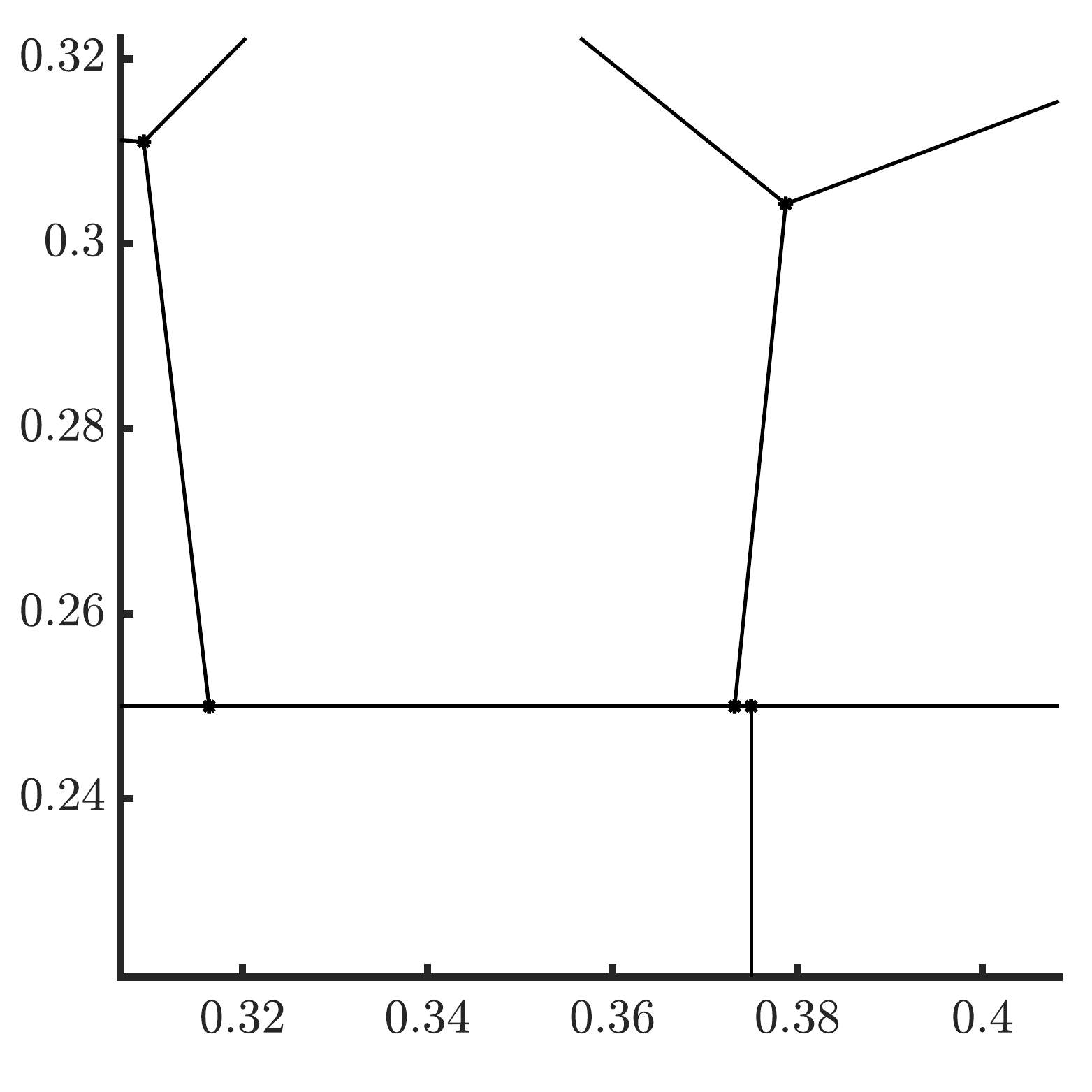}}
	\end{center}
	
	\vspace{-4mm}
	\caption{Polygonal mesh with small edges at interface on domain $\Omega$, and zoomed images near the point $(x,y)=(0.375,0.25)$.}
	\label{fig:Ex3mesh}
\end{figure}

\medskip \noindent\textbf{Example with jump of data on the interface.} 
First we verify the convergence rate of the proposed VE schemes with polynomial degree $k=2$. For this case we focus on the stationary case. We employ the method of manufactured solutions, for which we consider the following smooth closed-forms for global displacement and fluid pressure
\begin{align*}
\bu(x,y)=0.1\begin{pmatrix}
x(1-x) \cos(\pi x) \sin (\pi y)
\\
\sin(\pi y) \cos(\pi y) y^2(2-y)
\end{pmatrix}, \quad
p^P(x,y) =\sin( \pi x) \sin( \pi y),
\end{align*}
together with the parametric values (in adimensional form)
\begin{gather*}
\nu^{\rmP}=0.3, \quad E^{\rmP}=100, \quad \nu^{\rmE}=0.45, \quad E^{\rmE}=10000,\quad \kappa=10^{-6},\quad \alpha=0.1,\quad c_0=10^{-3},\quad \eta=0.01,
\end{gather*}
in the domain $\Omega_1:=(0,1)\times(0,2)$ 
and the Lam\'e constants are obtained
from the Young and Poisson moduli $E,\nu$ as $\lambda=\frac{ E  \nu}{(1+\nu)(1-2\nu)},\,
\mu= \frac{E}{2+2 \nu}$ on each subdomain $\OmP$ and $\OmE$. The manufactured displacement and pore pressure are used to construct manufactured global pressure (being defined separately on each subdomain). 
The poroelastic domain is $\OmP:=(0,1)^2$ and the elastic region is $\OmE:=(0,1)\times(1,2)$,
and we consider the same parametric properties in the whole domain $\Omega =\OmP \cup \OmE $,
and the exact global pressure is obtained from the respective problems in each subdomain. The boundary data are non-homogeneous, with values inherited from the manufactured displacement and pressures. The boundaries of the domain are set up as follows: $\Gamma_D^{\rmE}$ is the top segment, $\Gamma_D^{\rmP}$ is the bottom segment, $\Gamma_N^{\rmE}$ is conformed by the vertical segments of the elastic domain, and $\Gamma_N^{\rmP}$ corresponds to the vertical segments of the Biot boundary.  

Rectangular meshes (see Figure \ref{fig:sq_mesh})
are employed for each subdomain, but with different mesh sizes. We apply uniform mesh refinement on each discrete subdomain and generate successively refined meshes on which we compute approximate solutions, errors, and experimental convergence rates. 
We emphasise that for both the \emph{dofi-dofi} and edge stabilisations,
the discrete formulation achieves  optimal convergence rates as predicted by the theoretical error bounds \eqref{th4.3-est} (and considering only the steady case) as shown in Tables \ref{tab:test2} and \ref{tab:test2Stab}. We also show samples of the discrete displacements in Figure~\ref{fig:ex1-sols}. 
We also stress that the convergence is optimal even for the current parameter set which is in a challenging regime of near incompressibility and poroelastic locking. Other tests (not shown) confirm that this behaviour is also observed in a wide  range of material parameters. 
	
	\begin{figure}[t!]
			\begin{center}
				\subfigure[]{\includegraphics[width=0.495\textwidth]{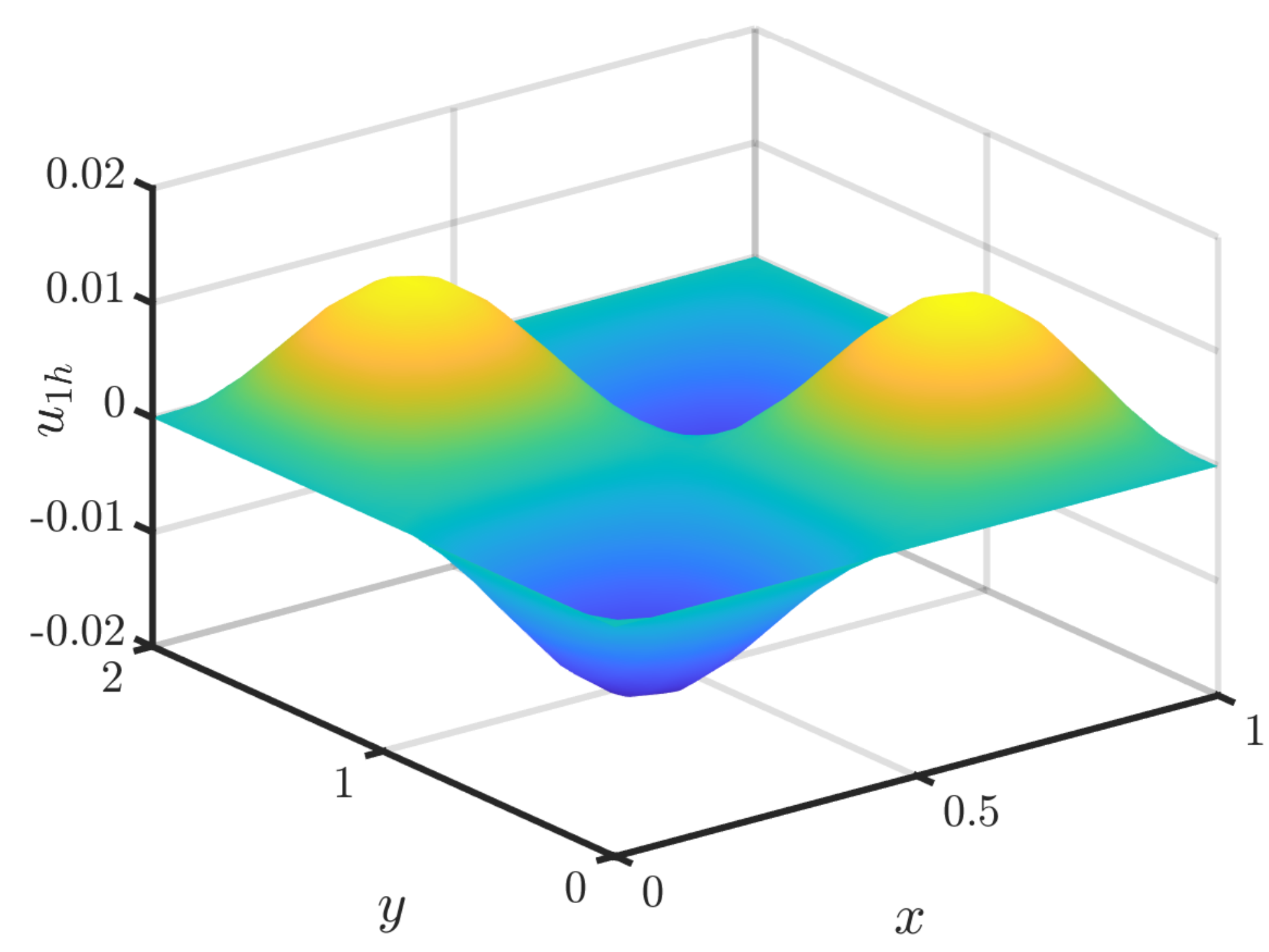}}
				\subfigure[]{\includegraphics[width=0.495\textwidth]{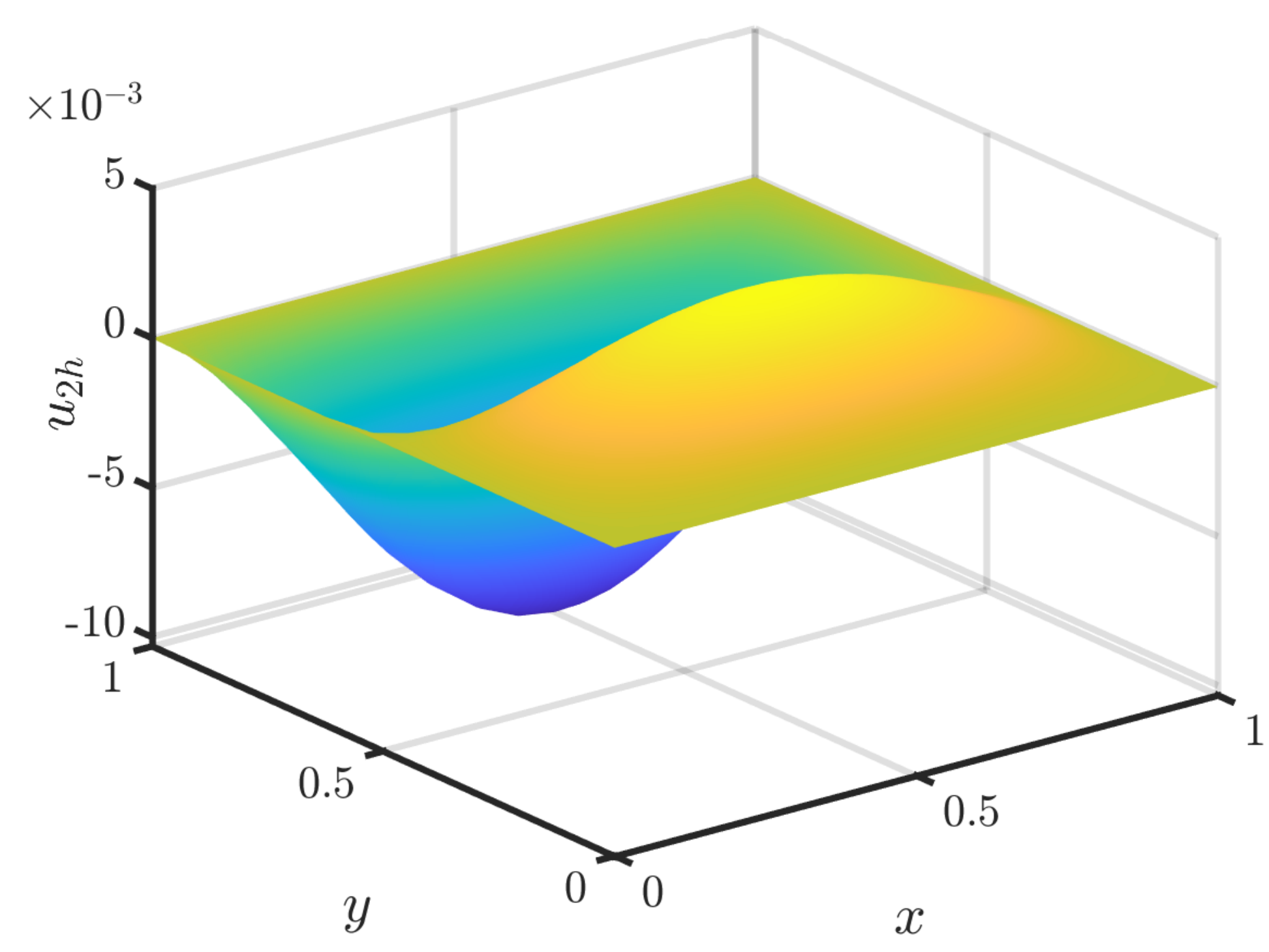}}
			\end{center}
			
			\vspace{-4mm}
			\caption{Accuracy verification test. Approximate displacement components.}
			\label{fig:ex1-sols}
		\end{figure}

\begin{table}[t!]
	\setlength{\tabcolsep}{5pt}
	\begin{center}
		\begin{tabular}{|ccccccccccc|}
			\hline
			$h$ & $E_0^2(\bu)$ & $r_0^2(\bu)$  & $E_1^2(\bu)$ & $r_1^2({\bu})$ & $E_0^2(p)$ & $r_0^2(p)$ & $E_1^2(p)$ & $r_1^2(p)$ & $E_0^2(\psi)$ & $r_0^2(\psi)$
			\\
			\hline
			0.47 & 0.07013 & -- & 0.21980 & -- & 0.27687 & -- & 0.46972 & -- & 0.15078 & -- \\
			0.28 & 0.01079 & 2.70 & 0.07060 & 1.64 & 0.05095 & 2.44 & 0.11743 & 2.00 & 0.05683 & 1.41
			\\
			0.16 & 1.57e-03 & 2.78 & 0.02042 & 1.79 & 4.13e-03 & 3.63 & 0.01940 & 2.60 & 	0.01793 & 1.66
			\\
			0.08 & 2.17e-04 & 2.85 & 5.53e-03 & 1.89 & 2.99e-04 & 3.79 & 4.08e-03 & 2.25 & 5.07e-03 & 1.82
			\\
			0.04 & 2.87e-05 & 2.91 & 1.44e-03 & 1.94 & 2.39e-05 & 3.64 & 9.68e-04 & 2.08 & 1.35e-03 & 1.91 \\
			0.02 & 3.70e-06 & 2.95 & 3.68e-04 & 1.97 & 2.35e-06 & 3.35 & 2.39e-04 & 2.02 & 3.47e-04 & 1.96 \\
			\hline
		\end{tabular}
	\end{center}
	\vspace{-3mm}
	\caption{Example with data jump on the interface. Verification of space convergence for the method with $k=2$ for the \emph{dofi-dofi} stabiliser. Errors and experimental convergence rates  for global displacement, global  pressure and Biot fluid pressure. 
	}\label{tab:test2}
\end{table}	
\begin{table}[t!]
	\setlength{\tabcolsep}{5pt}
	\begin{center}
		\begin{tabular}{|ccccccccccc|}
			\hline
			$h$ & $E_0^2(\bu)$ & $r_0^2(\bu)$  & $E_1^2(\bu)$ & $r_1^2({\bu})$ & $E_0^2(p)$ & $r_0^2(p)$ & $E_1^2(p)$ & $r_1^2(p)$ & $E_0^2(\psi)$ & $r_0^2(\psi)$
			\\
			\hline
			0.47 & 0.06444 & -- & 0.21567 & -- & 0.28577  & -- & 0.45811   & -- & 0.14884 & -- \\

			0.28 & 0.01050 & 2.62 & 0.07014 & 1.62 & 0.03749 & 2.93 & 0.10439 & 2.13 & 0.05667 & 1.39
			\\
			
			0.16 & 1.58e-03 & 2.73 & 0.02044 & 1.78 & 2.89e-03 & 3.70  & 0.01910 & 2.45 & 0.01792 & 1.66
			\\
			0.08 & 2.20e-04 & 2.84 & 5.56e-03 & 1.88 & 2.18e-04 & 3.72 & 4.08e-03 & 2.23 & 5.06e-03 & 1.82
			\\
			0.04 & 2.90e-05 & 2.92 & 1.45e-03 & 1.94 & 2.00e-05 & 3.45 & 9.68e-04 & 2.07 & 1.35e-03 & 1.91
			\\
			0.02 & 3.72e-06 & 2.96 & 3.70e-04  & 1.97 & 2.19e-06  &  3.18 & 2.39e-04  & 2.02 & 3.47e-04 & 1.96
			\\
			\hline
		\end{tabular}
	\end{center}
	\vspace{-3mm}
	\caption{Example with data jump on the interface. Verification of space convergence for the method with $k=2$ for the tangential edge stabiliser. Errors and convergence rates $r$ for global displacement, global  pressure and fluid pressure.}\label{tab:test2Stab}
\end{table}

	\begin{table}[t!]
		\setlength{\tabcolsep}{5pt}
		\begin{center}
			\begin{tabular}{|ccccccccccc|}
				\hline
				$h$ & $E_0^2(\bu)$ & $r_0^2(\bu)$  & $E_1^2(\bu)$ & $r_1^2({\bu})$ & $E_0^2(p)$ & $r_0^2(p)$ & $E_1^2(p)$ & $r_1^2(p)$ & $E_0^2(\psi)$ & $r_0^2(\psi)$				\\
				\hline
				0.282 & 0.02622 & -- & 0.11812 & -- & 0.00925 & -- & 0.05935 & -- &  0.14289 & --
				\\
				0.143 & 3.29e-03 & 2.99 & 0.03009 & 1.97 & 1.11e-03 & 3.06 & 0.01422 & 2.06 & 0.03672 & 1.96 \\
				0.072 & 4.13e-04 & 2.99 & 7.52e-03 & 1.99 & 1.37e-04 & 3.02 & 3.47e-03 & 2.03 & 9.26e-03 & 1.98 \\
				0.035 & 5.11e-05 & 3.00 & 1.87e-03 & 2.00 & 1.80e-05 & 2.96 & 8.71e-04 & 1.99 & 2.32e-03 & 1.99 \\
				0.017 & 6.42e-06 & 3.00 & 4.69e-04 & 2.00 & 2.14e-06 & 3.03 & 2.13e-04 & 2.03 & 5.81e-04 & 1.99 \\
				0.008 & 8.02e-07 & 3.00 & 1.17e-04 & 2.00 & 2.67e-07 & 3.01 & 5.31e-05 & 2.01 & 1.45e-04 & 1.99
				\\
				\hline
			\end{tabular}
		\end{center}
		\vspace{-3mm}
		\caption{Example with small edges. Verification of space convergence for the method with $k=2$ for the tangential edge 
		stabiliser. 
		Errors and convergence rates $r$ for global displacement, global  pressure, and fluid pressure.}\label{tab:test3}
\end{table}

\medskip\noindent \textbf{Example with small edges in the mesh.}
The aim of this test is to examine the influence of the mesh assumptions.We compare
the performance of the proposed VEM when the geometric assumption $\tilde{A}$ is violated (see Remark~\ref{hfgrtrdc}).
	
	For this test, we consider the following non-polynomial closed-forms for global displacement and Biot fluid pressure
	\begin{align*}
	\bu(x,y)=0.1\begin{pmatrix}
	x(1-x) \cos(\pi x) \sin (2 \pi y)
	\\
	\sin(\pi x) \cos(\pi y) y^2(1-y)
	\end{pmatrix}, \quad
	p^\mathrm{P}(x,y) =\cos(2 \pi (x-0.25)) \sin(2 \pi (y-0.25)),
	\end{align*}
	together with the non-dimensional parametric values
	\begin{gather*}
	\nu^{\rmP}=0.3, \quad E^{\rmP}=10, \quad \nu^{\rmE}=0.4, \quad E^{\rmE}=100,\quad \kappa=1,\quad \alpha=1,\quad c_0=1,\quad \eta=1,
	\end{gather*}
	in the domain $\Omega:=(0,1)^2$ and the Lam\'e constants are obtained
	from the Young and Poisson moduli $E,\nu$ as $\lambda=\frac{ E  \nu}{(1+\nu)(1-2\nu)},\,
	\mu= \frac{E}{2+2 \nu}$ on each independent domain $\OmP$ and $\OmE$.
	The elastic region is $\OmE:=(0.25,0.75)^2$ and the poroelastic
	domain is $\OmP:=\Omega \backslash \OmE$, and we consider the
	same parametric properties in the whole domain $\Omega $,
	and the exact global pressure is obtained from the respective problems
	in each subdomain. Polygonal meshes (see Figure~\ref{fig:Ex3mesh})
	are employed for each subdomain, but with different mesh sizes.
	
We emphasise that using the edge stabilisation yields the optimal convergence rates shown in Table~\ref{tab:test3} (obtained with the polynomial degree $2$).
	Even though the theoretical analysis has been developed under assumption $\tilde{A}$, this numerical
example shows that the error estimates might also hold true for more general mesh assumptions.
	
\medskip\noindent\textbf{Testing the convergence of a space and time dependent solution.}
Next we concentrate on a time-dependent problem featuring manufactured solutions for displacement and fluid pressure given by $\bu=\sin(t)(x^2 +y^2)[1,1]$ and $p=\sin(t)(x^2 +y^2)$, respectively. By considering the governing equations, we obtain the corresponding load functions using the provided physical parameters: $\nu^{\rmP}=0.3$, $E^{\rmP}=1$, $\nu^{\rmE}=0.4$, $E^{\rmE}=1$, $\kappa=1$, $\alpha=1$, $c_0=1$, and $\eta=1$. 

To assess the convergence properties of the numerical method employed, we systematically vary the mesh size $h$ on polygonal meshes (see Figure~\ref{fig:Ex3mesh}) and take the time step $\Delta t = h^2$, and analyse their impact on the solution accuracy. Table~\ref{tab:test4} confirms that the VEM analysed in previous sections  
 achieve the optimal second-order rate of convergence with respect to $h$ and $\Delta t$ predicted by Theorem~\ref{thm:fullyd}. 
	
	\begin{table}[t!]
	\setlength{\tabcolsep}{5pt}
	\begin{center}
		\begin{tabular}{|cccccccc|}
			\hline
			$h$ & $\Delta t$ & $E_1^2(\bu)$ & $r_1^2({\bu})$  & $E_1^2(p)$ & $r_1^2(p)$ & $E_0^2(\psi)$ & $r_0^2(\psi)$				\\
			\hline
		0.2795 & 0.07813 & 5.10e-03 & --  & 2.41e-03 & -- & 4.75e-03 & -- \\ 
		0.1398 & 0.01953 & 1.35e-03 & 1.92 & 6.17e-04 & 1.97 & 1.25e-03 & 1.92 \\ 
		0.0698 & 4.88e-03 & 3.36e-04 & 2.01 & 1.55e-04 & 1.98 & 3.12e-04 & 2.01 \\ 
		0.0349 & 1.22e-03 & 8.42e-05 & 1.99 & 3.87e-05 & 1.99 & 7.82e-05 & 1.99 \\ 
		0.0174 & 3.51e-04 & 2.11e-05 & 2.00 & 9.68e-06 & 2.00 & 1.95e-05  & 2.00 \\		
			\hline
		\end{tabular}
	\end{center}
	\vspace{-3mm}
	\caption{Verification of convergence rates $r$ for global displacement, global  pressure, and fluid pressure with varying mesh size $h$ and time step $\Delta t$.}\label{tab:test4}
\end{table}

\medskip\noindent\textbf{Example with a non-straight interface.} 
		In this test we investigate again the convergence of the method but now considering an interface as shown in Figure \ref{fig:circ_mesh}(a), approximating the circle of radius $\frac{1}{4}$ centred at $(\frac12,\frac12)$  separating elastic and poroelastic subdomains in the domain $\Omega=(0,1)^2$. The exact solutions for displacement and fluid pressure are chosen  as in the previous example, now selecting the following non-dimensional parametric data 
		\begin{gather*}
		\nu^{\rmP}=0.49999, \quad E^{\rmP}=100, \quad \nu^{\rmE}=0.499, \quad E^{\rmE}=\text{3e+04},\quad \kappa=\text{1e-04},\quad \alpha=1,\quad c_0=\text{1e-03},\quad \eta=1.
		\end{gather*}
		
\begin{figure}[t!]
			\begin{center}
				\subfigure[]{\includegraphics[width=0.4\textwidth]{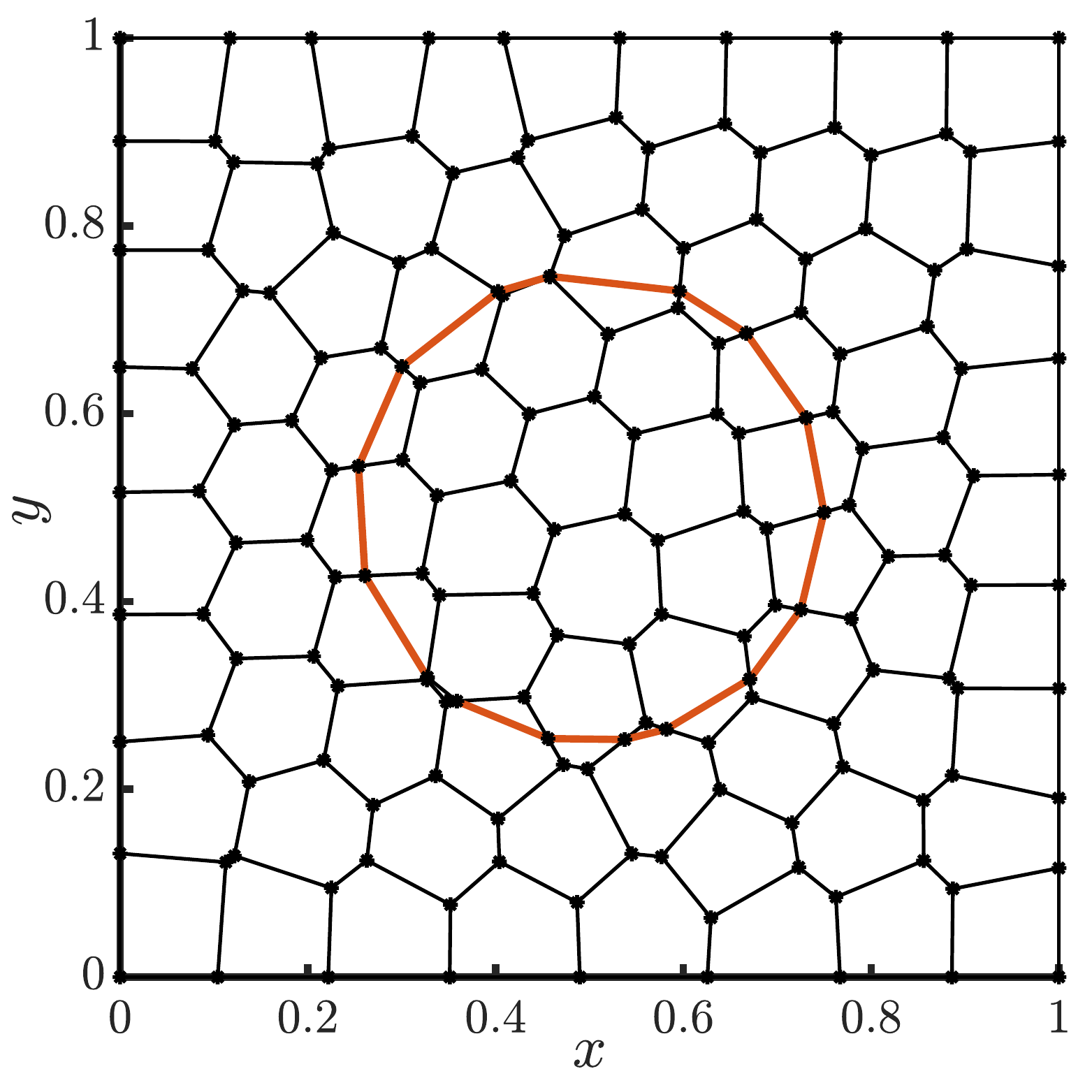}}\\
				\subfigure[]{\includegraphics[width=0.325\textwidth]{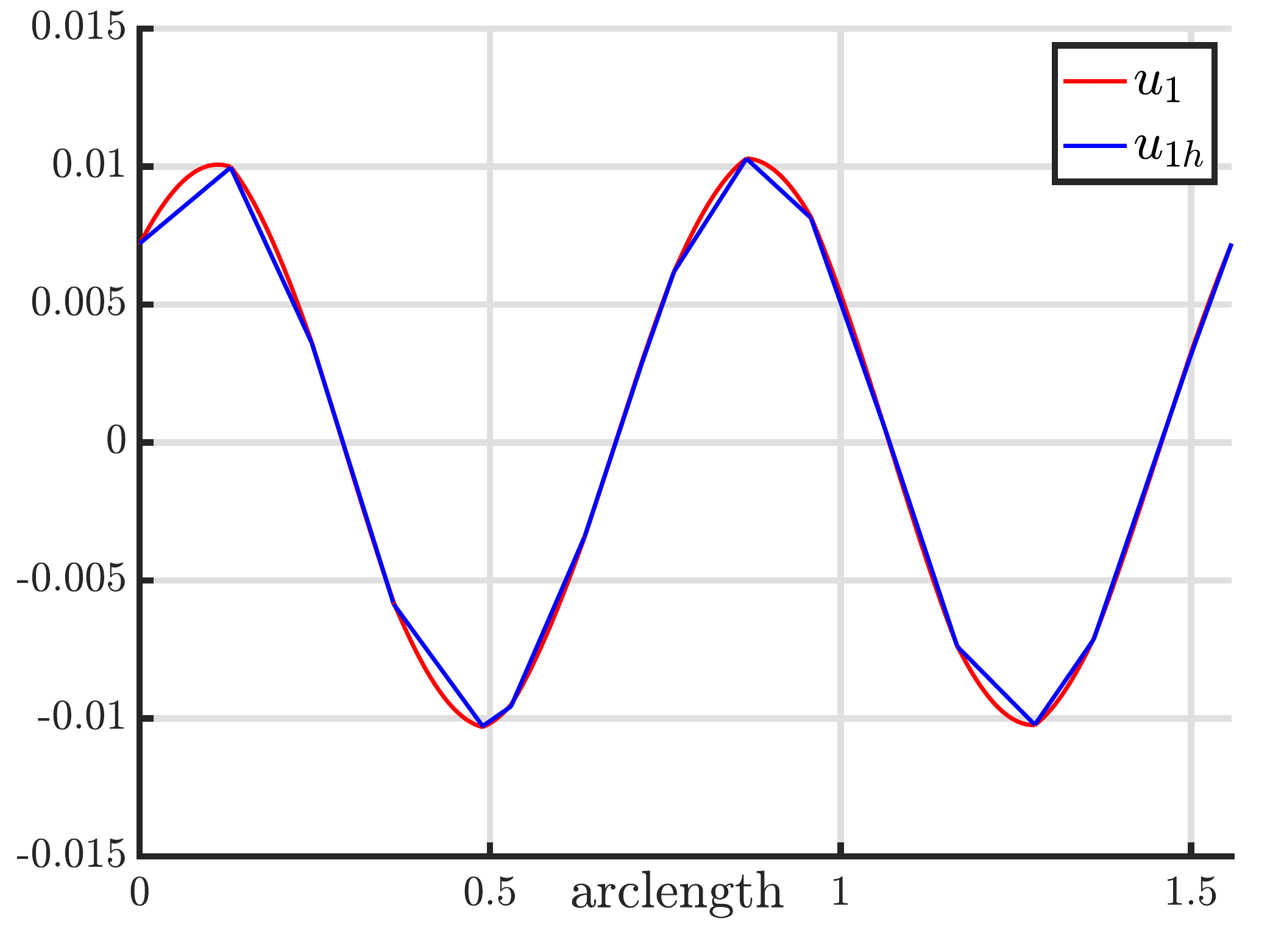}}
			\subfigure[]{\includegraphics[width=0.325\textwidth]{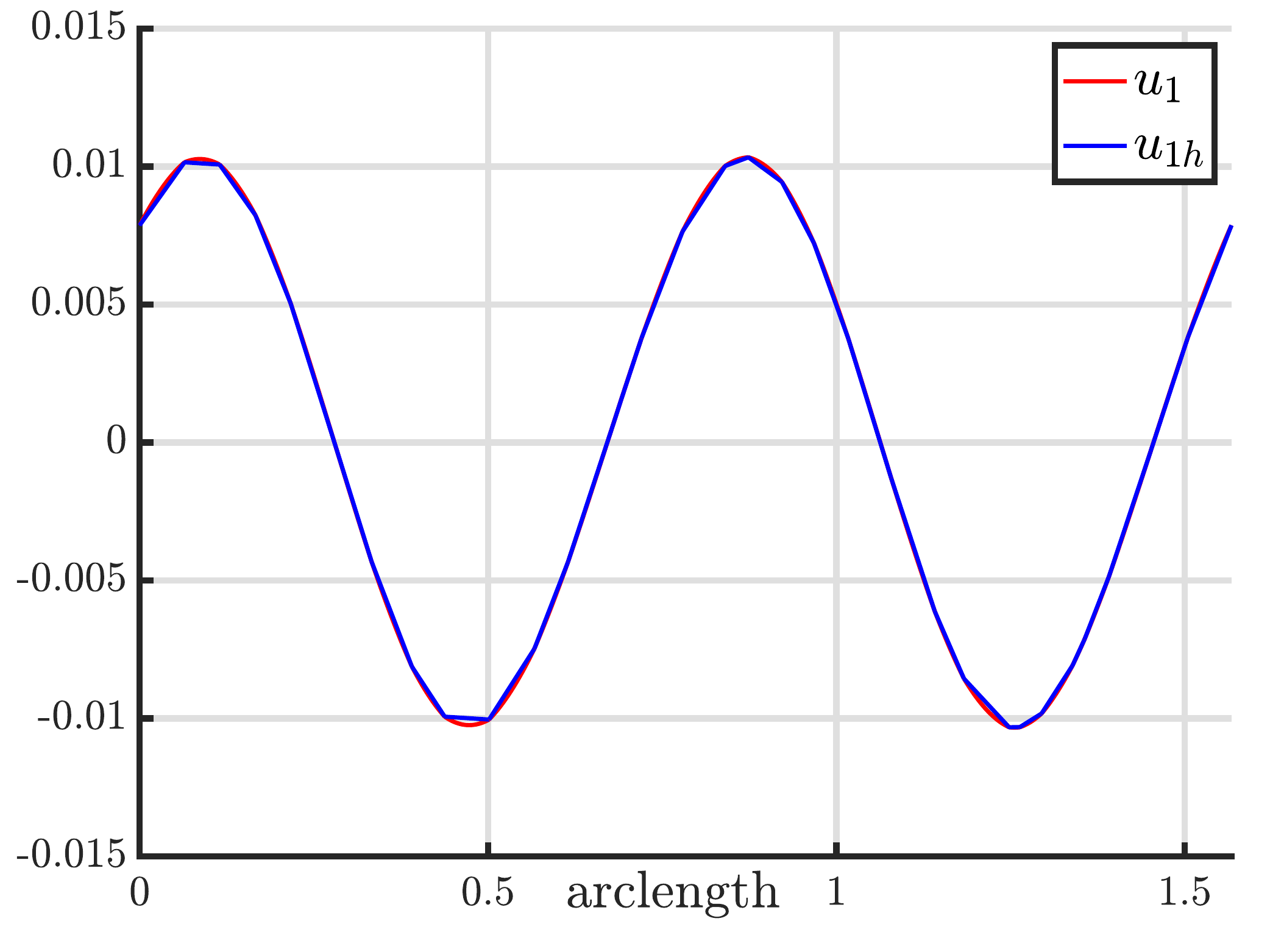}}
			\subfigure[]{\includegraphics[width=0.325\textwidth]{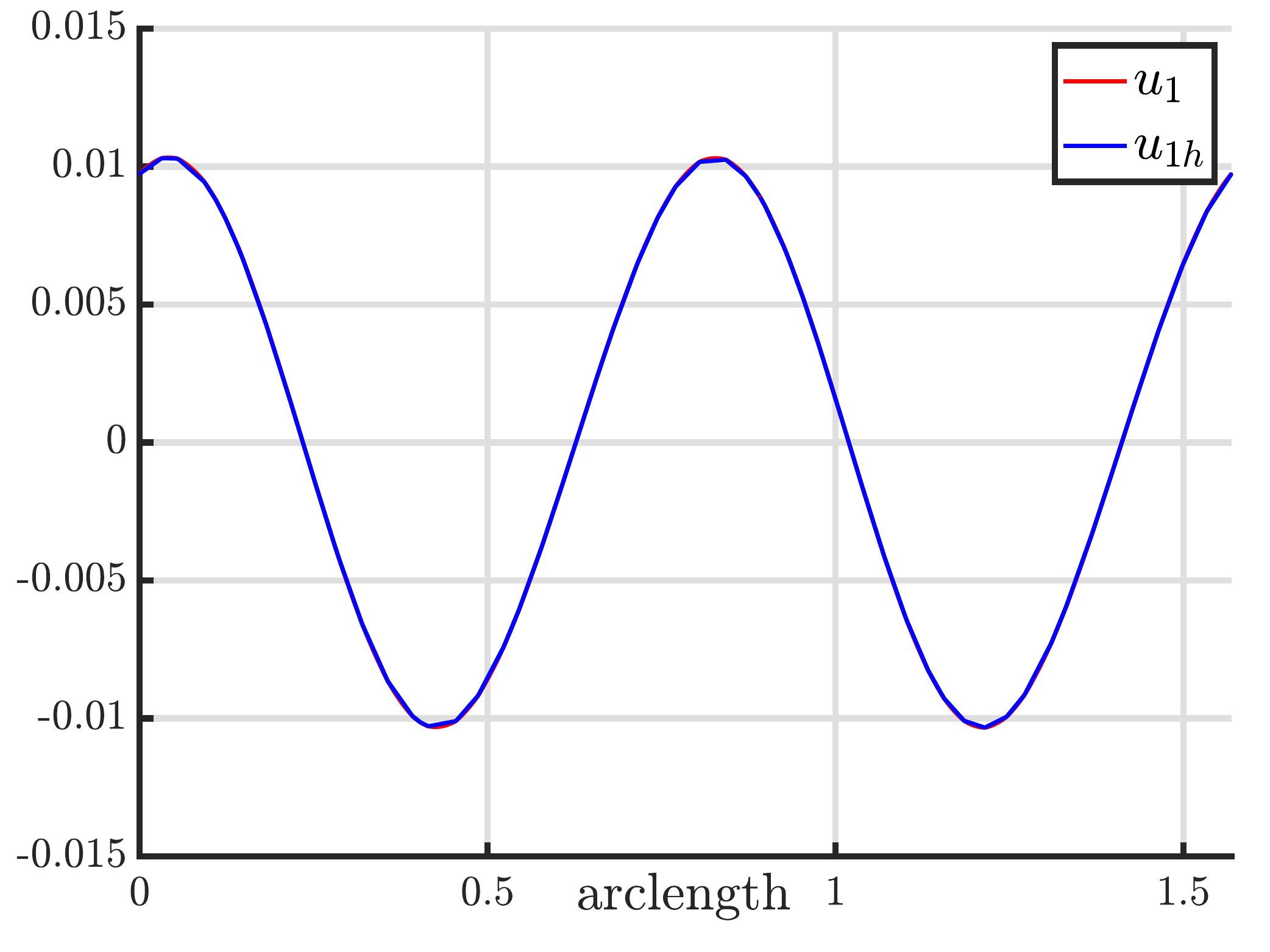}}
			\end{center}
			
			\vspace{-4mm}
			\caption{(a) Polygonal mesh with smaller edges at the interface on the domain $\Omega$, and first component of exact displacement solution (red) and approximate solution (blue) at polygonal interface with mesh size (b) $h=0.14$ (c) $h=0.07$ (d) $h=0.035$.}						\label{fig:circ_mesh}
		\end{figure}

Panels (b)-(c)-(d) of Figure \ref{fig:circ_mesh} show the approximate discrete first component of displacement and the exact horizontal displacement along the arc-length of the interface, for three different mesh resolutions. This confirms convergence with mesh refinement.

\medskip\noindent\textbf{Mandel test.} 
To conclude this section we consider here a modification of the classical Mandel's problem to the case of an interface elastic-poroelastic material \cite{adgmr20}. The domain is  the rectangle $\Omega = (0,100)\times(0,40)$ (in m$^2$) and we use the following parametric values
\begin{gather*}
\nu^{\rmP}=0.4, \quad E^{\rmP}=\text{2.4e+05}\,\mathrm{Pa}, \quad \nu^{\rmE}=0.499, \quad E^{\rmE}=\text{4.8e+05}\,\mathrm{Pa},\\ \kappa=\text{1e-06}\,\mathrm{m}^2,\quad \alpha=1,\quad c_0=\text{2.5e-04}\,\mathrm{Pa}^{-1},\quad \eta=\text{1e-03}\,\mathrm{m}^2/\mathrm{s}.
\end{gather*}
In this test, a constant traction of magnitude 5e+4 is applied on the top boundary of domain to produce the compression shown in Figure \ref{fig:mandel-displacement}(a)-(b). Sliding conditions are imposed along the left and bottom boundaries of the domain. The boundary conditions for the pore pressure field are taken as homogeneous Dirichlet on the right boundary of the poroelastic region, and of zero-flux type on the rest of the poroelastic boundary. This problem does not have a closed-form solution, but a qualitative agreement with the results from \cite{adgmr20} (in terms of deformation patterns) is observed in the figure. We also note there the displacement is stable even for the critical case of small specific storage coefficient $c_0$ and large $\lambda$.

\begin{figure}[t!]
			\begin{center}
				\subfigure[]{\includegraphics[width=0.495\textwidth]{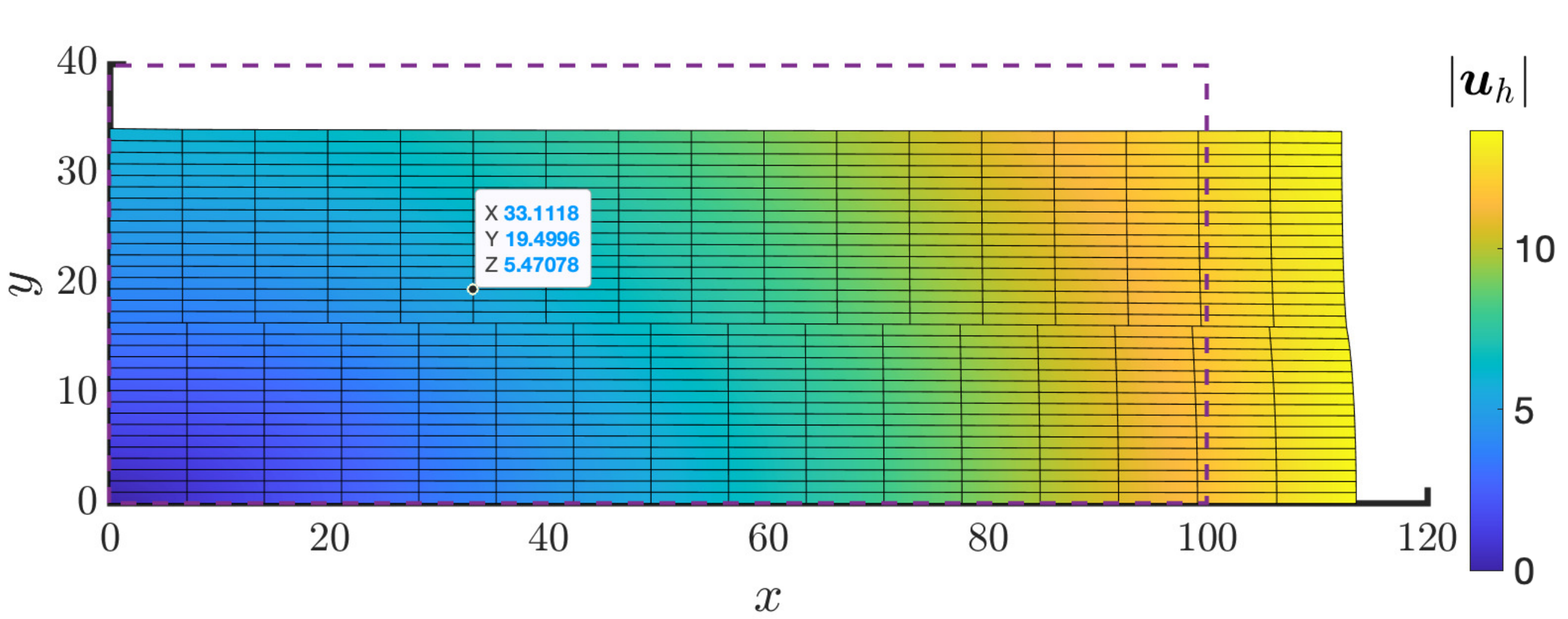}}
				\subfigure[]{\includegraphics[width=0.495\textwidth]{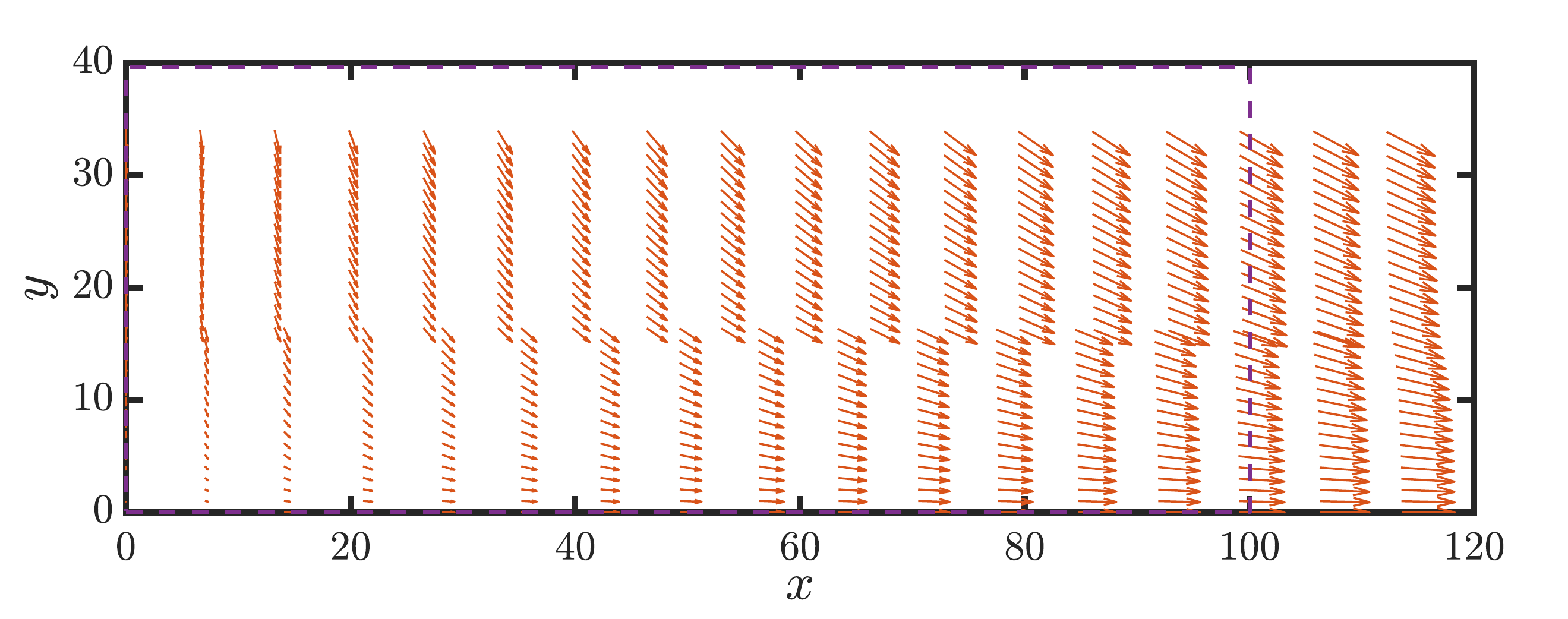}}
			\end{center}
			
			\vspace{-4mm}
			\caption{Magnitude of approximate displacement (a) and displacement vectors (b) rendered on the deformed configuration. For reference, the boundary of the undeformed domain is represented in dashed lines in both panels. }
			\label{fig:mandel-displacement}
		\end{figure}


\begin{appendices}

\section{Proof of Theorem~\ref{th:semid}} \label{proof-semi}
Thanks to the Scott--Dupont theory (see \cite{brenner08})
we know that
for every $s$ with $0\le s\le k$ and for every $u\in H^{1+s}(K)$,
there exists $u_\pi\in\mathbb{P}_k(K)$, $k\ge2$, such that
\begin{equation}\label{poly_est_u}
\| u - u_{\pi} \|_{0,K} + h_K |u -u_{\pi} |_{1,K} \lesssim h_K^{1+s} |u|_{1+s,K}
\quad \text{for all $K \in \mathcal{T}_h$.}
\end{equation}
We can then write the displacement and total pressure error
in terms of the poro\-elastic projector
\begin{align*}
(\bu - \bu_h)(t) = (\bu - I^h_{\bu}\bu)(t) + (I^h_{\bu} \bu - \bu_h)(t) := e_{\bu}^I(t) + e_{\bu}^A(t), \\
(\psi - \psi_h)(t) = (\psi - I^h_{\psi} \psi)(t) + (I^h_{\psi} \psi - \psi_h)(t) := e_{\psi}^I(t) + e_{\psi}^A(t).
\end{align*}
Then, a combination of equations \eqref{weak-Iuh}, \eqref{weak-uh} and \eqref{weak-u} gives
\begin{align*}
a_1^h(e_{\bu}^A, \bv_h)+ b_1(\bv_h, e_{\psi}^A) & = (a_1(\bu,\bv_h) -a_1^h(\bu_h, \bv_h)) + b_1(\bv_h, \psi- \psi_h) 
= (F-F^h)(\bv_h),
\end{align*}
and taking as test function $\bv_h = \partial_t e_{\bu}^A$, we
can write the relation
\begin{align}
a_1^h(e_{\bu}^A, \partial_t e_{\bu}^A)
+ b_1(\partial_t e_{\bu}^A, e_{\psi}^A) = (F-F^h)(\partial_t e_{\bu}^A). \label{est-eq1}
\end{align}
Now, we write the pressure error in terms of the poroelastic projector as follows
\begin{align*}
(p^{\rmP} - p_h^{\rmP})(t) = (p^{\rmP} - I^h_p p^{\rmP})(t) + (I^h_p p^{\rmP} - p_h^{\rmP})(t) := e_{p}^{I,\rmP}(t) + e_{p}^{A, \rmP}(t).
\end{align*}
Using \eqref{weak-Iph}, \eqref{weak-ph} and \eqref{weak-p}, we obtain
\begin{align*}
& \tilde{a}_2^h(\partial_t e_p^{A,\rmP}, q_h^{\rmP}) + a_2^h(e_p^{A, \rmP}, q_h^{\rmP}) - b_2(q_h^{\rmP}, \partial_t e_{\psi}^{A} ) \\ 
& =  (\tilde{a}_2^h(\partial_t I^h_p p^{\rmP}, q_h^{\rmP}) - \tilde{a}_2(\partial_t p^{\rmP}, q_h^{\rmP})) + b_2(q_h^{\rmP}, \partial_t e_{\psi}^I)  + (G- G^h)(q_h^{\rmP}).
\end{align*}
We can take $q_h^{\rmP} = e_p^{A, \rmP}$, which leads to
\begin{align} \label{est-eq2} \begin{split}
& \tilde{a}_2^h(\partial_t e_p^{A, \rmP}, e_p^{A, \rmP}) +a_2^h(e_p^{A, \rmP}, e_p^{A, \rmP}) - b_2(e_p^{A, \rmP}, \partial_t e_{\psi}^A) \\ & = (\tilde{a}_2^h(\partial_t I^h_p p^{\rmP}, e_p^{A, \rmP}) - \tilde{a}_2(\partial_t p^{\rmP}, e_p^{A, \rmP}) ) + b_2(e_p^{A, \rmP}, \partial_t e_{\psi}^I) + (G- G^h)(e_p^{A, \rmP}). \end{split}
\end{align}
Next we use \eqref{weak-Ipsih}, \eqref{weak-psih} and \eqref{weak-psi}, and this implies
\begin{align*}
& b_1(e_{\bu}^A, \phi_h)+ b_2(e_p^{A, \rmP} , \phi_h) - a_3(e_{\psi}^A, \phi_h)
= -b_2(e_p^{I, \rmP}, \phi_h) + a_3(e_{\psi}^I, \phi_h).
\end{align*}
Differentiating the above equation with respect to time
and taking $\phi_h = -e_{\psi}^A$, we can assert that
\begin{align}
- b_1(\partial_t e_{\bu}^A, e_{\psi}^A) - b_2(\partial_t e_p^{A, \rmP}, e_{\psi}^A) + a_3(\partial_t e_{\psi}^A, e_{\psi}^A) = b_2(\partial_t e_p^I, e_{\psi}^A) -a_3(\partial_t e_{\psi}^I, e_{\psi}^A). \label{est-eq3}
\end{align}
Then we simply add \eqref{est-eq1}, \eqref{est-eq2} and \eqref{est-eq3}, to obtain
\begin{align} \label{est-eq4} \begin{split}
&a_1^h(e_{\bu}^A, \partial_t e_{\bu}^A)  + \tilde{a}_2^h(\partial_t e_p^{A, \rmP}, e_p^{A, \rmP}) +a_2^h(e_p^{A, \rmP}, e_p^{A, \rmP}) + a_3(\partial_t e_{\psi}^A, e_{\psi}^A) - b_2(e_p^{A, \rmP}, \partial_t e_{\psi}^A) - b_2(\partial_t e_p^{A, \rmP}, e_{\psi}^A)\\
& = (F- F^h)(\partial_{t} e_{\bu}^A ) + (\tilde{a}_2^h(\partial_t I^h_p p^{ \rmP}, e_p^{A, \rmP}) - \tilde{a}_2(\partial_{t} p^{\rmP}, e_p^{A, \rmP}))
\\& \quad + b_2(e_p^{A, \rmP}, \partial_{t} e_{\psi}^I) + (G- G^h) (e_p^{A, \rmP})  + b_2(\partial_{t} e_p^{I, \rmP}, e_{\psi}^A) - a_3(\partial_{t} e_{\psi}^I, e_{\psi}^A). \end{split}
\end{align}
Regarding the left-hand side of \eqref{est-eq4}, repeating arguments to obtain alike to
	the stability proof
That is,
\begin{align*}
& a_1^h(e_{\bu}^A, \partial_t e_{\bu}^A)  + \tilde{a}_2^h(\partial_t e_p^{A, \rmP}, e_p^{A, \rmP})
+a_2^h(e_p^{A, \rmP}, e_p^{A, \rmP}) + a_3(\partial_t e_{\psi}^A, e_{\psi}^A) - b_2(e_p^{A, \rmP}, \partial_t e_{\psi}^A) - b_2(\partial_t e_p^{A, \rmP}, e_{\psi}^A)
\\
&  \gtrsim   {\mu^{\min}}\frac{\mathrm{d}}{\mathrm{d}t} \| \beps(e_{\bu}^A)\|_0^2 + c_0\frac{\mathrm{d}}{\mathrm{d}t} \| e_p^{A, \rmP}\|_{0, \OmP}^2 + \frac{2 \kappa_{\min}}{\eta} \|\nabla e_p^A\|_{0, \OmP}^2 + \frac{1}{{ \lambda^{\rmE}}} \frac{\mathrm{d}}{\mathrm{d}t} \|e_{\psi}^{A, \rmE}\|_{0, \OmE}^2 \\
& \qquad \qquad
+ \frac{1}{{ \lambda^{\rmP}}} \sum_{K \in \cT_h^{\rmP}} \biggl( \alpha^2\frac{\mathrm{d}}{\mathrm{d}t}\| (I-\Pi^0_K)  e_p^{A,\rmP}\|_{0,K}^2
+\frac{\mathrm{d}}{\mathrm{d}t} \|\alpha \Pi^0_K e_p^{A,\rmP} -e_{\psi}^{A,\rmP}\|_{0,K}^2  \biggr) .
\end{align*}
Then integrating equation \eqref{est-eq4} in time and using the consistency of   $\tilde{a}_2(\cdot, \cdot)$, implies the bound
\begin{align*}
& {\mu_{\min}} \| \beps(e_{\bu}^A(t))\|_0^2 + c_0  \| e_p^{A, \rmP} (t)\|_{0, \OmP}^2 + \frac{1}{{ \lambda^{\rmE}}} \|e_{\psi}^{A, \rmE}(t)\|_{0, \OmE}^2  + \frac{\kappa_{\min}}{\eta}\int_0^t \|\nabla e_p^{A, \rmP}(s)\|_{0, \OmP}^2 \, \mathrm{d}s \\
& + \frac{1}{{ \lambda^{\rmP}}} \sum_{K \in \cT_h^{\rmP}} \Bigl(\alpha^2  \| (I-\Pi^{0,k}_K)  e_p^{A, \rmP}(t)\|_{0,K}^2
+  \|(\alpha \Pi^{0,k}_K e_p^{A, \rmP} -e_{\psi}^{A, \rmP})(t)\|_{0,K}^2  \Bigr) \\
& \lesssim {\mu_{\min}} \| \beps(e_{\bu}^A(0))\|_0^2 + c_0  \| e_p^{A, \rmP} (0)\|_{0, \OmP}^2 + \frac{1}{{ \lambda^{\rmE}}} \|e_{\psi}^{A, \rmE}(0)\|_{0, \OmE}^2
\\& \quad
+ \frac{1}{{ \lambda^{\rmP}}} \sum_{K \in \cT_h^{\rmP}} \Bigl(\alpha^2  \| (I-\Pi^{0,k}_K)  e_p^{A, \rmP}(0)\|_{0,K}^2 +  \|(\alpha \Pi^{0,k}_K e_p^{A, \rmP} -e_{\psi}^A)(0)\|_{0,K}^2  \Bigr) \\
& \quad + \underbrace{ \int_0^t  \bigl((\bb- \bb_h)(s),\partial_{t} e_{\bu}^A(s) \bigr)_{0, \Omega} \, \mathrm{d}s}_{=:D_1} + \underbrace{\int_0^t \bigl((\ell^{\rmP} - \ell_h^{ \rmP})(s), e_p^{A, \rmP}(s) \bigr)_{0, \OmP}\,  \mathrm{d}s}_{=:D_2} \\
& \quad + \underbrace{\int_0^t \sum_{K \in \cT_h^{\rmP}} \Bigl(\tilde{a}_2^{h,K} \bigl(\partial_t(I^h_p p^{\rmP} - p_{\pi}^{\rmP})(s), e_p^{A, \rmP}(s) \bigr) - \tilde{a}^{K}_2 \bigl(\partial_t(p^{\rmP} - p_{\pi}^{\rmP})(s), e_p^{A, \rmP}(s) \bigr) \Bigr) \, \mathrm{d}s}_{=:D_3} \\
& \quad + \underbrace{\int_0^t \Bigl(b_2 \bigl(e_p^{A, \rmP}(s), \partial_{t} e_{\psi}^I(s) \bigr) + b_2 \bigl(\partial_{t} e_p^{I, \rmP} (s), e_{\psi}^A (s)\bigr) - a_3 \bigl(\partial_{t} e_{\psi}^I(s), e_{\psi}^A(s) \bigr) \Bigr) \, \mathrm{d}s}_{=:D_4}.
\end{align*}
Then we  integrate by parts  in time, and use Cauchy--Schwarz and Young's inequalities to arrive at
\begin{align*}
D_1 
& \le  \frac{{\mu_{\min}} }{2} \| \beps(e_{\bu}^A(t))\|_0^2 + C_1(\mu)  h^k \biggl(h^k |\bb(t)|_{k-1}^2 + |\bb(0)|_{k-1} \| \beps(e_{\bu}^A(0))\|_0 + \int_0^t | \partial_{t} \bb (s)|_{k-1} \| \beps(e_{\bu}^A(s))\|_{0}\, \mathrm{d}s \biggr),
\end{align*}
where we have used standard error estimate for the
$L^2$-projection $\bPi_K^{0,k}$ onto piecewise constant functions.
Using again Cauchy--Schwarz inequality, standard  error
estimates for $\Pi_K^{0,k}$ on the term $D_2$, Young's
and Poincar\'e inequalities readily gives
\begin{align*}
D_2 \lesssim h^k \int_0^t |\ell^{\rmP} (s)|_{k-1, \OmP} \|\nabla e_p^{A, \rmP}(s)\|_{0, \OmP}\, \ds	\le C_2 h^{2k} \int_{0}^t |\ell^{\rmP} (s)|_{k-1, \OmP}^2 \ds + \frac{\kappa_{\min}}{6 \eta} \int_{0}^t \|\nabla e_p^{A, \rmP}(s)\|_{0, \OmP}^2 \ds.
\end{align*}
On the other hand, considering the polynomial approximation
$p_{\pi}^{\rmP}$ (cf. \eqref{poly_est_u}) of $p^{\rmP}$, utilising the triangle inequality, Young's
and Poincar\'e inequality yield
\begin{align*}
D_3 
&  \lesssim h^{2(k+1)} \bigg(c_0 + \frac{\alpha^2}{{\lambda^{\rmP}}}\bigg)^2 \int_0^t |\partial_{t} p^{\rmP}(s)|_{k+1, \OmP}^2 \ds + \frac{\kappa_{\min}}{ 6\eta} \int_{0}^t \|\nabla e_p^{A, \rmP}(s)\|_{0, \OmP}^2 \ds.
\end{align*}
Also,
\begin{align*}
D_4 
& \lesssim  \frac{1}{\lambda}   h^k \int_0^t  \Bigl( \alpha (
| \partial_{t} \psi^{\rmP} (s)|_{k, \OmP} +
|\partial_t \bu^{\rmP}(s)|_{k+1,\OmP} + |\partial_t \bu^{\rmE}(s)|_{k+1, \OmE} )\| e_p^{A, \rmP} (s)\|_{0, \OmP} \\ & \qquad \qquad  \qquad + (\alpha h |\partial_{t} p^{\rmP}(s) |_{k+1, \OmP} + | \partial_{t} \psi^{\rmP} (s)|_{k, \OmP} + | \partial_{t} \psi^{\rmE} (s)|_{k, \OmE}
+ |\partial_t \bu(s)|_{k+1}) \|e_{\psi}^A (s)\|_0 \Bigr) \, \mathrm{d}s.
\end{align*}
Using \eqref{discr-infsup} and a combination of equations \eqref{weak-Iuh},
\eqref{weak-uh} and \eqref{weak-u}, we get
\begin{align} \label{bound:semi_inf-sup}
\| e_{\psi}^A(t) \|_0 & \le \sup_{\bv_h \in \bV_h} \frac{b_1(\bv_h,  e_{\psi}^A(t))}{\| \bv_h \|_1}
\lesssim  h^k | \bb(t) |_{k-1} + {\mu_{\max}} \| \beps(e_{\bu}^A(t))\|_0 .
\end{align}
Then, with the help of Young's and Poincar\'e
inequalities, the bound of~$D_4$  becomes
\begin{align*}
D_4 \lesssim  \frac{1}{\lambda} h^k \int_0^t \Bigl( &(\alpha h^k |\partial_{t} p^{\rmP}(s) |_{k+1, \OmP}
+ | \partial_{t} \psi^{\rmP} (s)|_{k, \OmP} + | \partial_{t} \psi^{\rmE} (s)|_{k, \OmE} + |\partial_t \bu^{\rmP}(s)|_{k+1, \OmP} + |\partial_t \bu^{\rmE}(s)|_{k+1, \OmE}
)  \\
& \times ( h^k |\bb(s)|_k + {\mu_{\max}} \| \beps(e_{\bu}^A(s))\|_0)
 + \alpha \| e_p^{A, \rmP} (s)\|_{0, \OmP} (| \partial_{t} \psi^{\rmP} (s)|_{k, \OmP} +
|\partial_t \bu(s)|_{k+1} ) \Bigr) \ds.
\end{align*}
Combining the bounds of all $D_i, i=1,2,3,4$ and proceeding in a similar fashion as for the bounds in the stability proof in \cite{burger_acom21} 
(using Lemma \ref{lem:semi-Xbound} and \eqref{new:semi-Xbound}), we can eventually conclude that
\begin{align*}
&  {\mu_{\min}} \| \beps(e_{\bu}^A(t))\|_0^2 + c_0 \| e_p^{A,\rmP} (t)\|_{0, \OmP}^2 +  \frac{1}{{\lambda^{\rmE}}} \|e_{\psi}^{A, \rmE}(t)\|_{0, \OmE}^2 + \frac{\kappa_{\min}}{\eta} \int_0^t \| \nabla e_p^{A, \rmP} (s)\|_{0, \OmP}^2  \, \mathrm{d} s
\\
& \lesssim {\mu_{\min}} \| \beps(e_{\bu}^A(0))\|_0^2 + \Big(c_0 + \frac{\alpha^2}{{\lambda^{\rmP}}} \Big)  \| e_p^{A, \rmP} (0)\|_{0, \OmP}^2 +  \frac{1}{{\lambda^{\rmE}}} \|e_{\psi}^{A,\rmE}(0)\|_{0, \OmE}^2  \\
& \quad +  h^{k+1} \bigg( \sup_{t \in [0,t_{\text{final}}] } |\bb(t)|_{k-1}^2 + \int_0^t \biggl( |\bb(s)|_{k-1}^2 + | \partial_{t}\bb (s)|_{k-1}^2 + |\ell^{\rmP} (s)|_{k-1, \OmP}^2 \\
& \qquad \qquad \quad +  \frac{1}{{\lambda_{\min}^2}} \big( | \partial_{t} \psi^{\rmP} (s)|_{k, \OmP}^2 +| \partial_{t} \psi^{\rmE} (s)|_{k, \OmE}^2 +|\partial_t \bu^{\rmP}(s)|_{k+1, \OmP}^2 +|\partial_t \bu^{\rmE}(s)|_{k+1, \OmE}^2  \big) \\
& \qquad \qquad \quad
 + \Big( c_0 + \frac{\alpha^2}{{\lambda^{\rmP}}}\Big)^2 h^{k+1} |\partial_{t} p^{\rmP}(s) |_{k+1, \OmP}^2 \biggr) \, \mathrm{d}s \biggr).
\end{align*}
Then choosing $\bu_h(0): =\bu_I(0)$,
$\psi_h(0): = \Pi^{0,k-1}\psi(0)$, $p_h^{\rmP}(0): = p_I^{\rmP}(0)$ and applying
the triangle inequality together with \eqref{bound:semi_inf-sup}, completes the rest of the proof.

\section{Proof of Theorem~\ref{thm:fullyd}}  \label{proof-full}
As in the semidiscrete case we split the individual errors as
\begin{align*}
\bu(t_n) - \bu_h^n & = (\bu(t_n) - I^h_{\bu} \bu(t_n)) + (I^h_{\bu} \bu(t_n)- \bu_h^n)=: E_{\bu}^{I,n} + E_{\bu}^{A,n}, \\
\psi(t_n) - \psi_h^n &= (\psi(t_n) - I^h_{\psi} \psi(t_n)) + (I^h_{\psi} \psi(t_n)- \psi_h^n)=: E_{\psi}^{I,n} + E_{\psi}^{A,n}, \\
p^{\rmP}(t_n) - p_h^{n,\rmP} &= (p^{\rmP}(t_n) - I^h_{p} p^{\rmP}(t_n)) + (I^h_{p} p^{\rmP}(t_n)- p_h^{n,\rmP})=: E_{p}^{I,n} + E_{p}^{A,n},
\end{align*}
where the error terms are $E_{p}^{I,n}:=E_{p}^{I,n} |_{\OmP}, E_{p}^{A,n} := E_{p}^{A,n} |_{\OmP}$. Then, from estimate \eqref{estimate-Ihu-psi} and following the steps of the proof of
Theorem~\ref{th:semid} we get the bounds
\begin{subequations}
	\begin{align}
	\|E_{\bu}^{I,n} \|_1 
	&\lesssim  h^k ( | \bu(0) |_{k+1} + | \psi^{\rmP}(0) |_{k,\OmP} + | \psi^{\rmE}(0) |_{k,\OmE}  + \|\partial_t\bu\|_{\bL^1(0,t_n; [H^{k+1}(\Omega)]^2)}
	+ \|\partial_t \psi\|_{L^1(0,t_n; k)} ),\label{estimate-E_u^I} \\
	\label{estimate-E_{psi}^I}
	\|E_{\psi}^{I,n} \|_0 & \lesssim  h^k ( | \bu(0) |_{k+1} + | \psi^{\rmP}(0) |_{k,\OmP} + | \psi^{\rmE}(0) |_{k,\OmE}  + \|\partial_t\bu\|_{\bL^1(0,t_n; [H^{k+1}(\Omega)]^2)} + \|\partial_t\psi\|_{L^1(0,t_n; k)} ),\\
	\label{estimate-E_p^I}
	\|E_{p}^{I,n} \|_{1, \OmP} &\lesssim  h^k ( | p^{\rmP}(0) |_{k+1, \OmP} + \|\partial_t p^{\rmP}\|_{L^1(0,t_n; H^{k+1}(\OmP))}),
	\end{align}
\end{subequations}
where $\|\partial_t \psi\|_{L^1(0,t_n;k)}:= \|\partial_t \psi^{\rmP}\|_{L^1(0,t_n; H^k(\OmP))} + \|\partial_t \psi^{\rmE}\|_{L^1(0,t_n; H^k(\OmE))}$.
From equations \eqref{weak-Iuh}, \eqref{weak-uhn} and \eqref{weak-u}, we readily get
\begin{align} \label{estimate-eq1}
a_1^h(E_{\bu}^{A,n},\bv_h) + b_1(\bv_h, E_{\psi}^{A,n}) = F^n(\bv_h) - F^{h,n}(\bv_h).
\end{align}
We then use \eqref{weak-Ipsih} and \eqref{weak-psihn}, and proceed to differentiate \eqref{weak-psi} with respect to time. This implies
\begin{align} \label{estimate-eq2} \begin{split}
&b_1(E_{\bu}^{A,n} - E_{\bu}^{A,n-1}, \phi_h) + b_2(E_p^{A,n}-E_p^{A,n-1}, \phi_h) - a_3(E_{\psi}^{A,n} - E_{\psi}^{A,n-1}, \phi_h)  \\
& \quad = b_1((\bu(t_n) - \bu(t_{n-1})) - (\Delta t) \partial_{t} \bu(t_n), \phi_h)
+ b_2((I_p^h p^{\rmP}(t_n) - I_p^h p^{\rmP}(t_{n-1})) - (\Delta t) \partial_{t} p^{\rmP}(t_n), \phi_h) \\
& \qquad
- a_3((I_{\psi}^h \psi(t_n) - I_{\psi}^h \psi(t_{n-1})) - (\Delta t) \partial_{t} \psi (t_n), \phi_h).
\end{split} \end{align}
Choosing $\bv_h = E_{\bu}^{A,n} - E_{\bu}^{A,n-1 }$ in \eqref{estimate-eq1}
and $\phi_h = - E_{\psi}^{A,n}$ in \eqref{estimate-eq2} and adding the results, gives
\begin{align} \label{estimate-eq3} \nonumber
& a_1^h(E_{\bu}^{A,n}, E_{\bu}^{A,n}- E_{\bu}^{A,n-1}) + a_3(E_{\psi}^{A,n} - E_{\psi}^{A,n-1}, E_{\psi}^{A,n}) - b_2(E_{p}^{A,n} - E_{p}^{A,n-1}, E_{\psi}^{A,n}) \\
& = ( \bb(t_n) - \bb^n_h, E_{\bu}^{A,n}- E_{\bu}^{A,n-1 } )_{0, \Omega}  - b_1((\bu(t_n) - \bu(t_{n-1})) - (\Delta t) \partial_{t} \bu(t_n), E_{\psi}^{A,n})  \\
& \quad - b_2((I_p^h p^{\rmP}(t_n) - I_p^h p^{\rmP}(t_{n-1})) - (\Delta t) \partial_{t} p^{\rmP}(t_n), E_{\psi}^{A,n})  + a_3((I_{\psi}^h \psi(t_n) - I_{\psi}^h \psi(t_{n-1})) - (\Delta t) \partial_{t} \psi (t_n), E_{\psi}^{A,n}). \nonumber
\end{align}
Next, and as a consequence of using \eqref{weak-Iph},
\eqref{weak-Iph}, \eqref{weak-ph} and \eqref{weak-p} with $q_h^{\rmP} = E_p^{A,n}$, we are left with
\begin{align} \label{estimate-eq4}
& \tilde{a}_2^h(E_{p}^{A,n} - E_{p}^{A,n-1}, E_{p}^{A,n}) +  \Delta t  a_2^h(E_{p}^{A,n}, E_{p}^{A,n}) - b_2(E_{p}^{A,n}, E_{\psi}^{A,n} - E_{\psi}^{A,n-1}) \nonumber
\\ & = \Delta t ( \ell^{\rmP}(t_n)- \ell^{n,\rmP}_h, E_{p}^{A,n} )_{0, \OmP} + \tilde{a}_2^h( I^h_p p^{\rmP}(t_n) - I^h_p p^{\rmP}(t_{n-1}), E_{p}^{A,n}) \\
& \qquad -  \tilde{a}_2((\Delta t) \partial_{t} p^{\rmP}(t_n), E_{p}^{A,n}) + b_2(E_{p}^{A,n}, (\Delta t) \partial_{t} \psi - (I^h_{\psi} \psi(t_n)) - I^h_{\psi} \psi(t_{n-1})).\nonumber
\end{align}
Adding \eqref{estimate-eq3}-\eqref{estimate-eq4} and repeating the  arguments used in deriving stability, we can assert that 	
\begin{align*}
& a_3(E_{\psi}^{A,n} - E_{\psi}^{A,n-1}, E_{\psi}^{A,n}) -  b_2(E_{p}^{A,n} - E_{p}^{A,n-1}, E_{\psi}^{A,n})\\
& \qquad -  b_2(E_{p}^{A,n}, E_{\psi}^{A,n} - E_{\psi}^{A,n-1}) + \tilde{a}_2^h(E_{p}^{A,n} - E_{p}^{A,n-1}, E_{p}^{A,n}) \\
& = (\Delta t) \bigg( c_0(\delta_t E_{p}^{A,n} , E_{p}^{A,n})_{0,\OmP} + \frac{1}{\lambda} \sum_{K \in \cT_h^{\rmP}} \big(\alpha^2 (\delta_t (I- \Pi^{0,k}_K) E_p^{A,n}, (I- \Pi^{0,k}_K) E_p^{A,n})_{0,K} \\
& \qquad -(\delta_t (\alpha \Pi^{0,k}_K E_p^{A,n} - E_{\psi}^{A,n}), \alpha \Pi^{0,k}_K E_p^{A,n} - E_{\psi}^{A,n})_{0,K} \big) +  \frac{\Delta t}{\lambda} (\delta_t E_{\psi}^{A,n} , E_{\psi}^{A,n})_{0,\OmE} \bigg),
\end{align*}
The left-hand side can be bounded using the inequality
\[	(f_h^n - f_h^{n-1}, f_h^n) \ge \frac{1}{2} \bigl(\| f_h^n \|_0^2 - \| f_h^{n-1}\|_0^2 \bigr),
\]
and then summing over $n$ we get
\begin{align*} 
& {\mu_{\min}} \| \beps(E_{\bu}^{A,n})\|_0^2 + c_0 \| E_{p}^{A,n}\|_{0, \OmP}^2 + (1/ {\lambda^{\rmE}}) \|E_{\psi}^{A,n}\|_{0, \OmE}^2 + (\Delta t) \frac{\kappa_{\min}}{\eta} \sum_{j=1}^n \| \nabla E_{p}^{A,j} \|_{0, \OmP}^2 \nonumber \\
& \quad + (1/{\lambda^{\rmP}})  \sum_{K \in \cT_h^{\rmP}}\bigg( \alpha^2 \| (I- \Pi^{0,k}_K) E_p^{A,n} \|_{0,K}^2 + \| \alpha \Pi^{0,k}_K E_p^{A,n} - E_{\psi}^{A,n}\|_{0,K}^2 \bigg) \\
& \le {\mu_{\min}} \| \beps(E_{\bu}^{A,0})\|_0^2 + c_0 \| E_{p}^{A,0}\|_{0, \OmP}^2 + (1/ {\lambda^{\rmE}}) \|E_{\psi}^{A,0}\|_{0, \OmE}^2 \\
& \quad + (1/{\lambda^{\rmP}})  \sum_{K \in \cT_h^{\rmP}} \bigg( \alpha^2  \| (I- \Pi^{0,k}_K) E_p^{A,0} \|_{0,K}^2  + \| \alpha \Pi^{0,k}_K E_p^{A,0} - E_{\psi}^{A,0} \|_{0,K}^2 \bigg) \\
&  \quad + \underbrace{ \sum_{j=1}^n ( \bb(t_j) - \bb^j_h, E_{\bu}^{A,j}- E_{\bu}^{A,j-1} )_{0,\Omega}}_{=:L_1}
+ \underbrace{ \sum_{j=1}^n \Delta t ( \ell^{\rmP}(t_j)- \ell^{j,\rmP}_h, E_{p}^{A,j} )_{0,\OmP}}_{=:L_2} \nonumber \\
& \quad - \underbrace{ \sum_{j=1}^n b_1((\bu(t_j) - \bu(t_{j-1})) - (\Delta t) \partial_{t} \bu(t_j), E_{\psi}^{A,j})}_{=:L_3}\nonumber\\
&\quad - \underbrace{ \sum_{j=1}^n b_2((I_p^h p^{\rmP}(t_j) - I_p^h p^{\rmP}(t_{j-1})) - (\Delta t) \partial_{t} p^{\rmP}(t_j), E_{\psi}^{A,j})}_{=:L_4} \nonumber\\
& \quad + \underbrace{ \sum_{j=1}^n a_3((I_{\psi}^h \psi(t_j) - I_{\psi}^h \psi(t_{j-1})) - (\Delta t) \partial_{t} \psi (t_j), E_{\psi}^{A,j}) }_{:=L_5}  \nonumber\\
& \quad + \underbrace{ \sum_{j=1}^n (\tilde{a}_2^h( I^h_p p^{\rmP}(t_j) - I^h_p p^{\rmP}(t_{j-1}), E_{p}^{A,j}) -  \tilde{a}_2((\Delta t) \partial_{t} p^{\rmP}(t_j), E_{p}^{A,j}) )}_{:=L_6} \\
& \quad + \underbrace{ \sum_{j=1}^n b_2(E_{p}^{A,j}, (\Delta t) \partial_{t} \psi - (I^h_{\psi} \psi(t_j) - I^h_{\psi} \psi(t_{j-1}))}_{:=L_7}. \nonumber
\end{align*}
We bound the term $L_1$ with the help of formula
	\[
	\sum_{j=1}^n (f_h^j - f_h^{j-1}, g_h^j )= (f_h^n, g_h^n) - (f_h^0, g_h^0)-\sum_{j=1}^n (f_h^{j-1}, g_h^j- g_h^{j-1}),
	\] 
        the estimates of projection $\bPi^{0,k}_K$, applying Taylor expansion, and using the generalised Young's inequality. This gives
\begin{align*}
L_1 &
\lesssim \frac{{\mu_{\min}}}{2} \| \beps (E_{\bu}^{A,n})\|_0^2 +     \frac{h^k}{{\mu_{\min}}} |b(0)|_{k-1}~ {\mu_{\min}} \| \beps (E_{\bu}^{A,0})\|_0 + \frac{h^{2k}}{{\mu_{\min}}} \max_{1 \le j \le n} |\bb(t_j) |_{k-1}^2 \\
& \qquad
+ (\Delta t) ~ h^k \sum_{j=1}^n  \frac{1}{{\mu_{\min}}} \Big(| \partial_{t} \bb^j |_{k-1} +  \big(\Delta t \int_{t_{j-1}}^{t_j} | \partial_{tt} \bb (s)|_{k-1}^2 \, \mathrm{d}s \big)^{1/2} \Big) {\mu_{\min}} \| \beps(E_{\bu}^{A,j-1}) \|_0.
\end{align*}
Then the estimate satisfied by the projection $\Pi^{0,k}_K$ along with Poincar\'e and
Young's inequality, yield
\begin{align*}
L_2
& \lesssim \sum_{j=1}^n (\Delta t) \frac{\eta }{\kappa_{\min}} h^{2k} | \ell^{\rmP}(t_j)|_{k-1, \OmP}^2 +  (\Delta t) \frac{\kappa_{\min}}{6 \eta} \sum_{j=1}^n \| \nabla E_{p}^{A,j}\|_{0, \OmP}^2.
\end{align*}
The discrete inf-sup condition \eqref{discr-infsup} implies that
\begin{align} \label{bound:discrete-inf-sup}
\|E_{\psi}^{A,j} \|_0 \lesssim  h^k |\bb(t_j)|_{k-1} + {\mu_{\max}} \|\beps(E_{\bu}^{A,j}) \|_0 .
\end{align}
Applying an expansion in Taylor series, together with \eqref{bound:discrete-inf-sup},  the Cauchy--Schwarz, and Young inequalities, enable us to write
\begin{align*}
L_3
& \lesssim \sum_{j=1}^n  \Big( (\Delta t)^3  \int_{t_{j-1}}^{t_j} \| \partial_{tt} \bu (s)\|_0^2 \ds  \Big)^{1/2} (h^k |\bb(t_j)|_{k-1} + {\mu_{\max}} \|\beps(E_{\bu}^{A,j}) \|_0 ).
\end{align*}
Then, after using   \eqref{estimate-Ihp}, \eqref{bound:discrete-inf-sup}, and applying again Cauchy--Schwarz inequality, we get
\begin{align*}
L_4 
&  \lesssim \frac{\alpha}{{\lambda^{\rmP}}} \sum_{j=1}^n \bigg( h^{k+1}  \Big( (\Delta t) \int_{t_{j-1}}^{t_j}| \partial_{t}p^{\rmP}(s)|_{k+1, \OmP}^2\, \mathrm{d}s \Big)^{1/2} + \Big( (\Delta t )^{3} \int_{t_{j-1}}^{t_j} \| \partial_{tt} p^{\rmP}(s ) \|_{0, \OmP}^2\, \mathrm{d}s\Big)^{1/2} \bigg)  \\
& \qquad \qquad \qquad \times (  h^k |\bb(t_j)|_{k-1} + {\mu_{\max}} \|\beps(E_{\bu}^{A,j}) \|_{0, \OmP} ).
\end{align*}
On the other hand, the stability of $a_3(\cdot, \cdot)$ and the proof for the bound of $L_4$ gives
\begin{align*}
L_5 
& \lesssim \frac{1}{{\lambda_{\min}}} \sum_{j=1}^n \bigg( h^{k+1}  \Big( (\Delta t) \int_{t_{j-1}}^{t_j}(| \partial_{t}\bu^{\rmP}(s)|_{k+1,\OmP}^2 + | \partial_{t}\bu^{\rmE}(s)|_{k+1,\OmE}^2 + | \partial_{t}\psi^{\rmP}(s)|_{k, \OmP}^2 + | \partial_{t}\psi^{\rmE}(s) |_{k, \OmE}^2)\, \mathrm{d}s \Big)^{1/2} \\
& \qquad \qquad \qquad + \Big( (\Delta t)^3  \int_{t_{j-1}}^{t_j} \| \partial_{tt} \psi (s)\|_0^2 \ds  \Big)^{1/2}  \bigg) ( \rho h^k |\bb(t_j)|_{k-1} + {\mu_{\max}} \|\beps(E_{\bu}^{A,j}) \|_0 ).
\end{align*}
The polynomial approximation $p_\pi^{\rmP}$ for fluid pressure, consistency of the bilinear form $\tilde{a}_2^h(\cdot, \cdot )$, stability of the bilinear forms $\tilde{a}_2(\cdot, \cdot ), \tilde{a}_2^h(\cdot, \cdot )$, the Cauchy--Schwarz, Poincar\'e
and Young's inequalities gives		
\begin{align*}
L_6 
& \lesssim {\Big( c_0 +  \frac{\alpha^2}{{\lambda^{\rmP}}} \Big)^2} \Big( h^{2(k+1)} \|\partial_t p^{\rmP} \|_{L^2(0,t_n;H^{k+1}(\OmP))}^2 + (\Delta t)^2 \|  \partial_{tt} p^{\rmP} \|_{L^2(0,t_n;L^2(\OmP))}^2 \Big)
\\
& \qquad
+ \Delta t \frac{\kappa_{\min}}{6 \eta} \sum_{j=1}^n \| \nabla E_{p}^{A,j} \|_{0, \OmP}^2.
\end{align*}
The continuity of $b_2(\cdot, \cdot)$, the bound derived for the term $L_5$ and using the Young's inequality gives 	
\begin{align*}
L_7 
& \lesssim {\Big( \frac{\alpha}{{\lambda_{\min}}} \Big)^2}\bigg(  h^{2k} (\| \partial_{t} \psi \|_{L^2(0,t_n;k)}^2 + \| \partial_{t} \bu \|_{\bL^2(0,t_n;[H^{k+1}(\Omega)]^2)}^2)+ (\Delta t )^2 \| \partial_{tt} \psi \|_{L^2(0,t_n;L^2(\Omega))}^2 \bigg) \\
& \qquad
+(\Delta t) \frac{\kappa_{\min}}{6 \eta} \sum_{j=1}^n \| \nabla E_p^{A,j} \|_0^2.
%
\end{align*}
In turn, putting together the bounds obtained for all $L_i$'s, $i=1, \dots, 7$, using the Young's inequality and Lemma \ref{lem:Xn-bound} concludes that
\begin{align*}
&{\mu_{\min}} \| \beps(E_{\bu}^{A,n})\|_0^2 + c_0 \| E_{p}^{A,n}\|_{0, \OmP}^2  + (1/{\lambda^{\rmE}}) \| E_{\psi}^{A,n}\|_{0, \OmE}^2  + (\Delta t) \frac{\kappa_{\min}}{\eta} \sum_{j=1}^n \| \nabla E_{p}^{A,j} \|_{0, \OmP}^2 \\
& \lesssim   {\mu_{\min}} \| \beps(E_{\bu}^{A,0})\|_0^2 + (c_0 + \alpha^2/{\lambda^{\rmP}}) \| E_{p}^{A,0}\|_{0, \OmP}^2 + (1/{\lambda^{\rmE}}) \| E_{\psi}^{A,0}\|_{0, \OmE}^2  +  \Big(1 + \Delta t \Big) h^{2k}   \max_{0 \le j \le n} |\bb(t_j) |_{k-1}^2 \\
& \qquad + h^{2k} \Delta t \sum_{j=1}^n  (|\bb(t_j)|_{k-1}^2 + (\Delta t) | \partial_{t} \bb |_{k-1}^2 + | \ell^{\rmP}(t_j)|_{k-1, \OmP}^2)  + (\Delta t)^2 h^{2k} \| \partial_{tt} \bb \|_{L^2(0,t_n;[H^{k-1}(\Omega)]^2)} \\
& \qquad
+ (\Delta t )^2 \big( ( c_0 +  \alpha^2/{\lambda^{\rmP}} )^2 \| \partial_{tt} p^{\rmP} \|_{L^2(0,t_n;L^2(\OmP))}^2  + \| \partial_{tt} \bu \|_{\bL^2(0,t_n;[L^2(\Omega)]^2)}^2 \\
& \qquad + {\frac{\alpha^2}{{\lambda^2_{\min}}}} \| \partial_{tt} \psi \|_{L^2(0,t_n;L^2(\Omega))}^2 \big) +  h^{2k} \big( \Big(\frac{\alpha}{{\lambda_{\min}}}\Big)^2  \Big(\| \partial_{t} \psi \|_{L^2(0,t_n;k)}^2  +  \| \partial_{t} \bu\|_{\bL^2(0,t_n;[H^{k+1}(\Omega)]^2)}^2 \Big)
\\
& \qquad
+ {( c_0 +  \alpha^2/{\lambda^{\rmP}} )^2 } h^{2} \| \partial_{t}p^{\rmP} \|_{L^2(0,t_n;H^{k+1}(\OmP))}^2  \big) .
\end{align*}
And finally, the desired result \eqref{th4.3-est} holds after  choosing $\bu_h^0: =\bu_I(0)$,
$\psi_h^0: = \Pi^{0,k-1}\psi(0)$, $p_h^{0, \rmP}: = p_I^{\rmP}(0)$ and applying triangle's inequality together with \eqref{estimate-E_u^I}-\eqref{estimate-E_p^I} and \eqref{bound:discrete-inf-sup}.
\end{appendices}

\end{document}